\makeatletter\@addtoreset{equation}{section}\makeatother
\definecolor{ao}{rgb}{0.0, 0.5, 0.0}
\newcommand{\jump}[1]{\lbrack\!\lbrack\,#1\,\rbrack\!\rbrack}
\newtheorem{theorem}{Theorem}[section]
\newtheorem{lemma}[theorem]{Lemma}
\newtheorem{proposition}[theorem]{Proposition}
\newtheorem{remark}[theorem]{Remark}
\newtheorem{corollary}[theorem]{Corollary}
\newtheorem{assumption}[theorem]{Assumption}
\newtheorem{definition}[theorem]{Definition}
\newcommand{\mesh}{\mathcal{T}}
\newcommand{\meshs}{{\mesh_*}}
\newcommand{\edge}{\mathcal{E}}
\newcommand{\edgeE}{{\mathcal{E}_E}}
\newcommand{\edges}{{\edge_*}}
\newcommand{\nodes}{\mathcal{N}}
\newcommand{\nodesE}{{\mathcal{N}_E}}
\newcommand{\Wmeshu}{\mathbb{W}_\mesh^1} 
\newcommand{\V}{\mathbb{V}}
\newcommand{\VV}{{\widetilde{\V}}}
\newcommand{\Vmesh}{\V_\mesh}
\newcommand{\VE}{{\V_E}}
\newcommand{\VdE}{{\V_{\partial E}}}
\newcommand{\Vmeshz}{\V_\mesh^0}
\newcommand{\Vmeshs}{\V_\meshs}
\newcommand{\cB}{{\mathcal{B}}}
\newcommand{\B}{a}
\newcommand{\Bmesh}{\mathcal{B}_\mesh}
\newcommand{\amesh}{a_\mesh}
\newcommand{\mmesh}{m_\mesh}
\newcommand{\aE}{a_E}
\newcommand{\mE}{m_E}
\newcommand{\Smesh}{S_\mesh}
\newcommand{\Smeshs}{S_\meshs}
\newcommand{\sE}{s_E}
\newcommand{\SE}{S_E}
\newcommand{\Pimesh}{{\Pi^\nabla_\mesh}}
\newcommand{\PiE}{{\Pi^\nabla_E}}
\newcommand{\PiEi}{{\Pi^\nabla_{E_i}}}
\newcommand{\Imesh}{\mathcal{I}_\mesh}
\newcommand{\IE}{\mathcal{I}_E}
\newcommand{\Imeshz}{\mathcal{I}_\mesh^0 }
\newcommand{\Pimeshz}{{\Pi^0_\mesh}}
\newcommand{\PiEz}{{\Pi^0_E}}
\newcommand{\umesh}{u_\mesh}
\newcommand{\umeshs}{u_\meshs}
\newcommand{\vmesh}{v_\mesh}
\newcommand{\etamesh}{\eta_\mesh}
\newcommand{\etameshs}{\eta_\meshs}
\newcommand{\rmesh}{r_\mesh}
\newcommand{\jmesh}{j_\mesh}
\newcommand{\dataoscmesh}{{\Psi}_\mesh}
\newcommand{\vvvert}{|\!|\!|}
\newcommand{\data}{\mathcal{D}}
\newcommand{\appdata}{\widehat{\mathcal{D}}}
\newenvironment{algotab}
{\par\begin{samepage}%
\begin{tabbing}\ttfamily%
 \hspace*{5mm}\=\hspace{3ex}\=\hspace{3ex}\=\hspace{3ex}\=\hspace{3ex}%
\=\hspace{3ex}\=\hspace{3ex}\=\hspace{3ex}\=\hspace{3ex}\kill}%
{\end{tabbing}\end{samepage}}
\title{Adaptive VEM: Stabilization-Free A Posteriori Error Analysis and Contraction Property}
\author[1]{L. Beir\~ao da Veiga \thanks{lourenco.beirao@unimib.it}}
\author[2]{C. Canuto \thanks{claudio.canuto@polito.it}}
\author[3]{R. H. Nochetto \thanks{rhn@math.umd.edu}}
\author[4]{G. Vacca \thanks{giuseppe.vacca@uniba.it}}
\author[5]{M. Verani \thanks{marco.verani@polimi.it}}
\affil[1]{Dipartimento di Matematica e Applicazioni,
Universit\`a degli Studi di Milano Bicocca,
Via Roberto Cozzi 55 - 20125 Milano, Italy}
\affil[2]{Dipartimento di Scienze Matematiche G.L. Lagrange,
 Politecnico di Torino, 
 Corso Duca degli Abruzzi 24 - 10129 Torino, Italy}
\affil[3]{Department of Mathematics and Institute
for Physical Science and Technology, 
University of Maryland, 
College Park - 20742, MD, USA}
\affil[4]{Dipartimento di Matematica, 
Universit\`a degli Studi di Bari, 
Via Edoardo Orabona 4  - 70125 Bari, Italy}
\affil[5]{MOX-Laboratory for Modeling and Scientific Computing,  Dipartimento di Matematica, Politecnico di Milano, 
Piazza Leonardo da Vinci 32 - 20133 Milano, Italy}
\title{Adaptive VEM: Stabilization-Free A Posteriori Error Analysis and Contraction Property}
\begin{document}
\maketitle

\begin{abstract}
In the present paper we initiate the challenging task of building a mathematically sound theory for Adaptive Virtual Element Methods (AVEMs). Among the realm of polygonal meshes, we restrict our analysis to triangular meshes with hanging nodes in 2d -- the simplest meshes with a systematic refinement procedure that preserves shape regularity and optimal complexity. A major challenge in the a posteriori error analysis of AVEMs is the presence of the stabilization term, which is of the same order as the residual-type error estimator but prevents the equivalence of the latter with the energy error.
Under the assumption that any chain of recursively created hanging nodes has uniformly bounded length, we show that the stabilization term can be made arbitrarily small relative to the error estimator provided the stabilization parameter of the scheme is sufficiently large. This quantitative estimate leads to stabilization-free upper and lower a posteriori bounds for the energy error. This novel and crucial property of VEMs hinges on the largest subspace of continuous piecewise linear functions and the delicate interplay between its coarser scales and the finer ones of the VEM space. An important consequence for piecewise constant data is a contraction property between consecutive loops of AVEMs, which we also prove. Our results apply to $H^1$-conforming (lowest order) VEMs of any kind, including the classical and enhanced VEMs.
\end{abstract}

%

\section{Introduction}\label{sec:intro}

A posteriori error estimates have become over the last four decades an indispensable tool
for realistic and intricate computations in both science and engineering. They are computable
quantities in terms of the discrete solution and data that control the approximation error,
typically in the energy norm $|\cdot|_{1,\Omega}$, from both above and below. Such estimators can be split into local
contributions and exploited to drive adaptive algorithms that equidistribute the approximation
error and so the computational effort. This has made simulation of complex phenomena accessible
with modest computational resources.

Practice and theory of a posteriori error analysis and ensuing adaptive algorithms is a
relatively mature research area for linear elliptic partial differential equations (PDEs), especially
with the finite element method (FEM). They give rise to the so-called adaptive FEMs (or
AFEMs for short). We refer to the survey papers \cite{NSV:09,NochettoVeeser:12} for an
account of the state-of-the-art on the following two fundamental and complementary aspects of this endeavor:
\begin{enumerate}[$\bullet$]
\item
  {\it Derivation of a posteriori error estimates:} Residual estimators are the first and
  simplest estimators; see Babu\v ska and Miller \cite{BabuskaMiller:87} and \cite{BabuskaRheinboldt:1978}.
  They exhibit upper
  and lower bounds (up to data oscillation) with stability constants of moderate size that depend on
  interpolation constants and thus on the geometry of the underlying meshes. Other estimators
  have been developed over the years with the goal of getting more precise or even constant
  free estimates; examples are local problems on elements \cite{BankWeiser:1985} and stars
  \cite{MorinNochettoSiebert:2003}, gradient
  recovery estimators \cite{ZZ:1987,Rodriguez:1994}, and flux equilibration estimators
  \cite{BraessSchoeberl:2008,BPS:2009}. It turns out that
  they are all equivalent to the energy error. In addition, low order approximation
  \cite{BabuskaMiller:87,BabuskaRheinboldt:1978,BankWeiser:1985}
  has evolved into high-order methods such as the $hp$-FEM \cite{MelenkWohlmuth:2001}.

\item
  {\it Proof of convergence and optimatity of AFEMs:} The study of adaptive loops of the form
  \begin{equation}\label{eq:SEMR}
    \texttt{SOLVE}
    \quad\longrightarrow\quad
    \texttt{ESTIMATE}
    \quad\longrightarrow\quad
    \texttt{MARK}
    \quad\longrightarrow\quad
    \texttt{REFINE}
  \end{equation}
  is an oustanding problem in numerical analysis of PDEs. The issue at stake is that discrete
  solutions at different level of resolution, typically on nested meshes, must be compared.
  This, in conjunction with the upper bound and D\"orfler marking, yields a contraction
  property for every step of \eqref{eq:SEMR}. Optimality entails further understanding of how
  the a posteriori estimator changes with the discrete solution and mesh refinement, as well
  as whether it can be localized to the refined region and yet provide control of the error between
  discrete solutions. This, combined with marking minimal sets and complexity estimates
  for mesh refinement strategies, leads to optimality of AFEM in the sense that the energy
  error decreases with optimal rate (up to a multiplicative constant) in terms of degrees
  of freedom. Theory for fixed polynomial degree \cite{NSV:09,NochettoVeeser:12} extends
  somewhat to variable order \cite{CNSV:2016,CNSV:2017}.
\end{enumerate}

\medskip

{\bf Virtual element methods (VEMs).}  They are a relatively new discretization paradigm which allows for general
polytopal meshes, any polynomial degree, and yet conforming $H^1$-approximations for second order
problems \cite{volley,autostop}. This geometric flexibility is very useful in some applications (a few examples being \cite{Berrone-appl, Paulino-appl, Artioli-appl, Wriggers-appl, leaflet}),
but comes at a price for the design and practical use of adaptive VEMs (or AVEMs for short).

Two natural, but yet open, questions arise:
\begin{enumerate}[$\bullet$]
\item
  {\it Procedure:} Is it possible to systematically refine general polytopes and preserve shape regularity?
  Beir\~ao da Veiga and Manzini \cite{daVeigaManzini:2015} proposed a first residual based error estimator and introduced a simple refinement rule for any  convex polygon. The rule is to connect the barycenter of the polygon with mid-points of edges, where the word ``edge'' needs to be intended disregarding the existence of hanging nodes generated during the refinement procedure. It is not difficult to check that such procedure guarantees to generate a sequence of shape regular meshes. More sophisticated practical refinement procedures, which guarantee shape regularity, have been recently proposed (see, e.g., \cite{Berrone:2022} and \cite{Antonietti-Manuzzi:2022}. Note that shape regularity is critical to have robust interpolation estimates regardless of the resolution level.
 
\item
  {\it Complexity:} Is it possible to prove that the number of
  elements generated by {\tt REFINE} is proportional to the number of elements marked collectively
  for refinement by {\tt MARK}? On the one hand, the answer is affirmative if the refinement is completely local.
  This in turn comes at the expense of unlimited growth of nodes per element, which may be hard to handle
  computationally and does not add enhanced accuracy. On the other hand, restricting the number of
  hanging nodes per edge makes the question very delicate, and generally false
  at every step of \eqref{eq:SEMR}. This is altogether crucial
  to show that iteration of \eqref{eq:SEMR} leads to an error decay comparable with the best
  approximation in terms of degrees of freedom.
\end{enumerate}

The development of a posteriori error estimates for VEMs mimics that of FEMs.
The estimator of residual type most relevant to us is that proposed by Cangiani et al. \cite{Cangiani-etal:2017}.
The upper and lower bounds derived in \cite{Cangiani-etal:2017} involve stabilization terms, but
are valid for arbitrary polygonal elements,
any polynomial degree, and general (coercive) second order operators with variable coefficients. 
Estimators for the $hp$-version of VEMs are developed in \cite{BeiraoManziniMascotto:2019},
for anisotropic VEMs in \cite{Antonietti-etal:2020}, and for mixed VEMs in
\cite{CangianiMunar:2019,MunarSequeiraFilander:2020}. Gradient recovery estimators are derived in
\cite{ChiBeiraoPaulino:2019} whereas those based on equilibrated fluxes are studied in
\cite{DassiGedickeMascotto:2021a,DassiGedickeMascotto:2021b}. 

A key constituent of VEMs to deal with general polytopes is stabilization
(although it is possible to design VEMs that do not require any stabilization, see \cite{Berrone-Borio-Marcon:2021}, the analysis is still at its infancy). 
Even though the role of stabilization is clear and precise in the
a priori error analysis of VEMs to make the discrete bilinear form coercive,
it remains elusive in the a posteriori counterpart. The main
contribution of this paper is to show that such a role is not vital.

\medskip
{\bf Setting.}
Our approach to adaptivity for VEMs is twofold. In this paper, we consider residual estimators,
derive {\it stabilization-free} a posteriori upper and lower bounds, and prove a contraction property
for AVEMs for piecewise constant data. The removal of the
stabilization term is essential to study \eqref{eq:SEMR} for variable data and thereby design a two-step
AVEM and prove its convergence and
optimal complexity, which will be accomplished in \cite{BCNVV:22}. To achieve these goals, we put ourselves
in the simplest possible but relevant setting consisting of the following four simplifying assumptions.
\begin{enumerate}[$\bullet$]
\item
  {\it Meshes:} We consider partitions $\mesh$ of a  polygonal domain $\Omega$ for $d=2$ made of elements
  $E$, which are triangles with hanging nodes and refined via the newest vertex bisection (NVB). In contrast to
  FEMs, the hanging nodes carry degrees of freedom in the VEM philosophy. The NVB dictates a unique infinite binary
  tree with roots in the initial mesh, in which every triangle $E$ is uniquely determined and traceable back
  to the roots. This geometric rigidity is crucial to optimal complexity (see \cite{BDD:04,NochettoVeeser:12}
  for $d=2$ and \cite{Stevenson:08,NSV:09} for $d>2$),
  and plays an essential role in the study of AFEMs \cite{NSV:09,NochettoVeeser:12} as well as in the
  sequel paper \cite{BCNVV:22} on AVEMs. Quadrilateral partitions with hanging nodes are practical in the VEM
  context and amenable to analysis, but general polygonal elements are currently out of reach.

\item
  {\it Polynomial degree:} We restrict our analysis to piecewise linear elements on the skeleton $\edge$
  of $\mesh$. This is not just for convenience to simplify the presentation. It enters in the notion of
  global index (see Definition \ref{def:node-index}) and the scaled Poincar\'e inequality (see
  Proposition \ref{prop:scaledPoincare}). They lead to the stabilization-free a posteriori error estimators
  discussed below. Extensions to higher polynomial degrees appear feasible and are currently underway.

\item
  {\it Global index:} This is a natural number $\lambda(\bm{x})$ that characterizes the level of a
  hanging node $\bm{x}$ generated by successive NVB of an element $E\in\mesh$. We make the key assumption that,
  for all hanging nodes $\bm{x}$ of all meshes $\mesh$, there exists a universal constant $\Lambda>0$ such that
  \begin{equation}\label{eq:global-index-intro}
    \lambda(\bm{x}) \le \Lambda.
  \end{equation}
  This novel notion has profound geometric consequences.
  First, any chain of recursively created hanging nodes has uniformly bounded length, second a side of a triangle $E$ can contain at most $2^\Lambda-1$
  hanging nodes, and third any edge $e$ of $E$ has a size comparable with that of $E$, namely $h_e \simeq h_E$.
  These properties are instrumental to prove the scaled Poincar\'e inequality, but do not prevent deep
  refinement in the interior of $E$.

\item
  {\it Data:} We consider $\Omega$ to be polygonal and the symmetric elliptic PDE
  \begin{equation}\label{eq:pde-intro}
   - \nabla\cdot \left( A \nabla u \right) + c u = f \quad  \text{ in } \Omega,
  \end{equation}
  with piecewise constant data $\data = (A,c,f)$ and vanishing Dirichlet boundary condition.
  This choice simplifies the presentation and avoids approximation of $\Omega$
  and data oscillation terms. We extend the efficiency and reliability
  estimates to variable data $\data$ in Section \ref{sec:extensions}, and postpone the convergence (and complexity) analysis for general data to the forthcoming article \cite{BCNVV:22}. 
  However, piecewise constant data play a
  fundamental role in the design of AVEMs in \cite{BCNVV:22} because we approximate adaptively $\data$
  to a desired level of accuracy before we reduce the PDE error to a comparable level. Therefore,
  the analysis of \cite{BCNVV:22} hinges on having $\data$ piecewise constant when dealing with a
  posteriori error estimators for \eqref{eq:pde-intro}.
\end{enumerate}

Our approach is a first attempt to develop mathematically sound AVEMs. This simplest setting
serves to highlight similarities and striking differences with respect to AFEMs.

\medskip
{\bf Contributions.}
We now describe our main contributions. Let $\V_\mesh$ be a general $H^1$-conforming (lowest order) VEM space over $\mesh$ (see for instance \cite{volley,projectors}), which entails a suitable continuous extension of piecewise linear functions on the skeleton $\edge$ to $\Omega$. 
Let $a_\mesh$ and $m_\mesh$ be the VEM bilinear forms
corresponding to the second-order and zeroth-order terms in \eqref{eq:pde-intro}, and let ${\cal F}_{\mesh}$ be the linear form corresponding to the forcing term; we refer to
Section \ref{sec:discrete-pb} for details. If $S_\mesh$ denotes the stabilization term, then the discrete
solution $u_\mesh\in\V_\mesh$ satisfies
\begin{equation}\label{eq:discrete-prob}
  a_\mesh(u_\mesh,v) + m_\mesh(u_\mesh,v) + \gamma S_\mesh(u_\mesh,v) = {\cal F}_{\mesh} (v)
  \quad\forall \, v\in\V_\mesh.
\end{equation}
Problem \eqref{eq:discrete-prob} admits a unique solution $u_\mesh$ for all values of the
stabilization parameter $\gamma>0$. Let $\eta_\mesh(u_\mesh,\data)$ be the residual a posteriori
error estimator for piecewise constant data $\data$ studied in Section \ref{sec:aposteriori}.
Our global a posteriori error estimates read as follows:
\begin{equation}\label{eq:aposteriori}
c_{apost} \eta_\mesh^2(u_\mesh,\data) - S_\mesh(u_\mesh,u_\mesh) \le
|u-u_\mesh|_{1,\Omega}^2 \le C_{apost} \big(\eta_\mesh^2(u_\mesh,\data) + S_\mesh(u_\mesh,u_\mesh)  \big),
\end{equation}
for suitable constants $c_{apost}<C_{apost}$; 
see Proposition \ref{prop:aposteriori} and Corollary \ref{global-lower-bound}. We stress that, in
contrast to \cite{Cangiani-etal:2017}, the
stabilization term $S_\mesh$ appears without the constant $\gamma$ in \eqref{eq:aposteriori}. 
One of our main results is Proposition \ref{prop:bound-ST}: there exists a constant $C_B>0$ depending
on $\Lambda$ but independent of $\mesh, u_\mesh$ and $\gamma$ such that
\begin{equation}\label{eq:bound-ST-intro}
\gamma^2 S_\mesh(u_\mesh,u_\mesh) \le C_B \, \eta_\mesh^2(u_\mesh,\data).
\end{equation}
Computations reveal that \eqref{eq:bound-ST-intro} is sharp provided the number of hanging
nodes is large relative to the total, and confirm that the stabilization term $S_\mesh(u_\mesh,u_\mesh)$
is of the same asymptotic order as the estimator $\eta_\mesh^2(u_\mesh,\data)$;
see details in Section \ref{sec:numerics-1}.
The significance of \eqref{eq:bound-ST-intro} is that it gives the quantitative condition
$\gamma^2 > C_B/c_{apost}$ on $\gamma$ for $S_\mesh(u_\mesh,u_\mesh)$ to be absorbed within
$\eta_\mesh^2(u_\mesh,\data)$ and, combined with \eqref{eq:aposteriori}, yields the
{\it stabilization-free a posteriori error estimates}
\begin{equation}\label{eq:stab-free}
  \Big(c_{apost} - \frac{C_B}{\gamma^2}  \Big) \eta_\mesh^2(u_\mesh,\data)
  \le |u- u_\mesh|_{1,\Omega}^2
    \le C_{apost} \Big( 1 + \frac{C_B}{\gamma^2}  \Big) \eta_\mesh^2(u_\mesh,\data).
\end{equation}
In contrast to a priori error estimates,
this new estimate sheds light on the secondary role played by stabilization in a posteriori
error analysis. Moreover, the relation between discrete solutions on
different meshes involves the stabilization terms on each mesh, which complicates the theory of
\eqref{eq:SEMR}. Applying once again \eqref{eq:bound-ST-intro}, we prove a contraction property
for AVEMs of the form \eqref{eq:SEMR} for piecewise constant data,
with $\gamma$ chosen perhaps a bit larger than in \eqref{eq:stab-free}.
Precisely, we show that a suitable combination of energy error and residual contracts at each iteration of \eqref{eq:SEMR}: for some $\alpha \in (0,1)$ and $\beta>0$, there holds
\begin{equation}
\vvvert u - \umeshs \vvvert^2+\beta \etameshs^2(\umeshs,\data) \leq \alpha \left( \vvvert u - \umesh \vvvert^2+\beta \etamesh^2(\umesh,\data) \right)\,,
\end{equation}
where $\meshs$ denotes the refinement of $\mesh$ produced by AVEM.
We use this framework as a building block in the construction and analysis of AVEMs
for variable data in \cite{BCNVV:22}.

We conclude this introduction with a heuristic explanation of the idea behind \eqref{eq:bound-ST-intro}.
It is inspired by the analysis of adaptive discontinuous Galerkin methods (dG) by Karakashian and Pascal
\cite{KarakashianPascal:03,KarakashianPascal:07} and Bonito and Nochetto \cite{BonitoNochetto:10}. It
turns out that to control the penalty term of dG, which is also of the same order as the estimator, a suitable
estimate involving the penalty parameter $\gamma$ similar to \eqref{eq:bound-ST-intro} is derived in
\cite{KarakashianPascal:07} to prove convergence and is further exploited in \cite{BonitoNochetto:10} to
show convergence under minimal regularity and optimality of \eqref{eq:SEMR}. This hinges on the 
subspace $\V_\mesh^0$ of all continuous, piecewise linear functions over $\mesh$. 
It turns out that
the stabilization term $S_\mesh$ vanishes on the subspace $\V_\mesh^0$, namely $S_\mesh(w,v)=0$
for all $v\in\V_\mesh^0,w\in\V_\mesh$. The delicate issue at stake is to relate the coarser scales of $\V_\mesh^0$ with the
finer ones of $\V_\mesh$, which is made possible by the restriction \eqref{eq:global-index-intro} on the global
index $\lambda$. This leads to the following two fundamental and novel estimates for VEMs.

To state these results, hereafter we will make use of the $\lesssim$ and $\simeq$ symbols to denote bounds up to a constant that is independent of $\mesh$ and any other critical parameter; specifications will be given when needed.
The first key estimate is the following scaled Poincar\'e inequality proved in Section \ref{sec:poincare}:
\begin{equation}\label{eq:scaled-Poincare-intro}
  \sum_{E\in\mesh} h_E^{-2} \|v\|_{0,E}^2 \lesssim |v|_{1,\Omega}^2
  \quad\forall \, v\in\V_\mesh
\end{equation}
so that $v$ vanishes at all nodes of $\mesh^0$, the so-called {\it proper nodes}.
The second key estimate relates the interpolation errors in $\V_\mesh^0$ and $\V_\mesh$ due to
the corresponding piecewise linear Lagrange interpolation operators $\Imeshz$ and $\Imesh$:
\begin{equation}\label{eq:Lagrange-interpolation} 
  | v-\Imeshz v|_{1,\Omega}  \, \lesssim \, | v-\Imesh v|_{1,\mesh} \qquad \forall v \in \Vmesh \,;
\end{equation}
note that in general $\Imesh v$ is discontinuous in $\Omega$ and $|\cdot|_{1,\mesh}$ stands for the
broken $H^1$-seminorm.
This estimate is proved in Section \ref{sec:bound-ST} along with \eqref{eq:bound-ST-intro}. The constants
hidden in both \eqref{eq:scaled-Poincare-intro} and \eqref{eq:Lagrange-interpolation} depend on
$\Lambda$ in \eqref{eq:global-index-intro} and blow-up as $\Lambda\to\infty$.
This extends, upon suitably modifying the VEM, to rectangular elements but not
to general polygons.

\medskip
{\bf Outline.}
The paper is organized as follows. In Section \ref{sec:ingr} we introduce the bilinear forms associated
with \eqref{eq:pde-intro} and the VEM discretization
with piecewise linear functions on the skeleton $\edge$. We also
discuss the notion of global index $\lambda$ and the main restriction \eqref{eq:global-index-intro}.
In Section \ref{sec:prep} we present some technical estimates such as \eqref{eq:scaled-Poincare-intro}.
The a posteriori error
analysis is carried out in Section \ref{sec:aposteriori}. Inequality \eqref{eq:scaled-Poincare-intro}
is instrumental to derive \eqref{eq:aposteriori} without the parameter $\gamma$, which
combined with \eqref{eq:bound-ST-intro} yields \eqref{eq:stab-free} immediately.
We postpone the proof of \eqref{eq:scaled-Poincare-intro} to Section \ref{sec:poincare} and those
of \eqref{eq:Lagrange-interpolation} and \eqref{eq:bound-ST-intro} to Section \ref{sec:bound-ST}. 
Inequality \eqref{eq:bound-ST-intro} is essential to study the effect of mesh refinements in the
a posteriori error estimator $\eta_\mesh(u_\mesh,\data)$ and the error $|u- u_\mesh|_{1,\Omega}$,
which altogether culminates with a proof of the contraction property of AVEM in Section \ref{sec:AVEM-pcwconstant}.
In Section \ref{sec:extensions}
we extend some of our estimates to variable coefficients. We conclude in
Section \ref{sec:experiments} with two insightful numerical experiments. The first one verifies
computationally that the dependence on $\gamma$ in \eqref{eq:bound-ST-intro} is generically sharp.
The second test, on a highly singular problem with checkerboard pattern, illustrates the ability of
AVEM to capture the local solution structure and compares the performance of AVEM with that of conforming AFEM.

\section{The problem and its discretization}
\label{sec:ingr}
In a polygonal domain $\Omega \subset \mathbb{R}^2$, consider the second-order Dirichlet boundary-value problem
\begin{equation}\label{eq:pde}
- \nabla\cdot \left(A \nabla u \right) + c u = f \quad  \text{ in } \Omega\,, \qquad u = 0 \quad \text{on } \partial\Omega\,,
\end{equation}
with data $\data=(A,c,f)$, where $A \in (L^\infty(\Omega))^{2\times 2}$ is symmetric and uniformly positive-definite in $\Omega$,
$c \in L^\infty(\Omega)$ is non-negative in $\Omega$, and $f \in L^2(\Omega)$. 
The variational formulation of the problem is
\begin{equation}\label{eq:pde-var}
u \in \V \ : \   \cB(u,v) = (f,v)_\Omega  \qquad \forall v \in \V \,,
\end{equation}
with $\V := H^1_0(\Omega)$ and 
$\cB(u,v):=\B(u,v) + m(u,v)$
where 
$$
\B(u,v):= \int_\Omega ( A \nabla u) \cdot \nabla v \, ,  \quad 
m(u,v) := \int_\Omega c \, u \, v 
$$ 
are the bilinear forms associated with Problem \eqref{eq:pde}.
Let $\vvvert \cdot \vvvert=\sqrt{\mathcal{B}(\cdot,\cdot)}$ be the energy norm, which satisfies  
\begin{equation}\label{norm:equiv}
c_\cB \vert v \vert_{1,\Omega}^2\leq \vvvert v \vvvert^2\leq c^\cB \vert v \vert^2_{1,\Omega}\quad \forall v \in \V\,,
\end{equation}
for suitable constants $0 <c_\cB \le c^\cB$.

\subsection{Triangulations}

In view of the adaptive discretization of the problem, let us fix an initial conforming partition $\mesh_0$ of $\overline{\Omega}$ made of triangular elements. Let us denote by $\mesh$ any refinement of $\mesh_0$ obtained by a finite number of successive newest-vertex element bisections; the triangulation $\mesh$ need not be conforming, since hanging nodes may be generated by the refinement. Let $\nodes$ denote the set of nodes of $\mesh$, i.e., the collection of all vertices of the triangles in $\mesh$; a node $z \in \nodes$ is {\it proper} if it is a vertex of all triangles containing it; otherwise, it is a {\it hanging node}. Thus,  ${\cal N}={\cal P}\cup{\cal H}$ is partitioned into the union of the set ${\cal P}$ of proper nodes and the set ${\cal H}$ of hanging nodes.

Given an element $E \in \mesh$, let $\nodesE$ be the set of nodes sitting on $\partial E$; it contains the three vertices and, possibly, some hanging node.  If the cardinality $|\nodesE|=3$, $E$ is said a {\it proper triangle} of $\mesh$; if $|\nodesE|>3$, then according to the VEM philosophy $E$ is not viewed as a triangle, but as a polygon having $|\nodesE|$ edges, some of which are placed consecutively on the same line. Any such edge $e \subset \partial E$ is called an {\it interface} (with the neighboring element, or with the exterior of the domain); the set of all edges of $E$ is denoted by $\edgeE$. Note that if $e \subset \partial E \cap \partial E'$, then it is an edge for both elements; consequently, it is meaningful to define the {\it skeleton} of the triangulation $\mesh$ by setting $\edge := \bigcup_{E \in \mesh} \edgeE$. Throughout the paper, we will set $h_E = |E|^{1/2}$ for an element and $h_e=|e|$ for an edge.

\medskip
The concept of {\it global index} of a hanging node will be crucial in the sequel. To define it, let us first observe that any hanging node $\bm{x} \in {\cal H}$ has been obtained through a newest-vertex bisection by halving an edge of a triangle in the preceding triangulation; denoting by  $\bm{x}', \bm{x}'' \in  {\cal N}$ the endpoints of such edge, let us set ${\cal B}(\bm{x})=\{\bm{x}', \bm{x}''\}$.

\begin{definition}[global index of a node]\label{def:node-index}
The global index $\lambda$ of a node $\bm{x} \in {\cal N}$  is recursively defined as follows:
\begin{itemize}
\item If $\bm{x} \in {\cal P}$, then set $\lambda(\bm{x}):=0$;
\item If $\bm{x} \in {\cal H}$,  with $\bm{x}', \bm{x}'' \in  {\cal B}(\bm{x})$, then set $\lambda(\bm{x}):=\max(\lambda(\bm{x}'), \lambda(\bm{x}''))+1$.
\end{itemize}
\end{definition}

\noindent
We require that the largest global index in $\mesh$, defined as 
$$
\Lambda_{\mesh} := \max_{\bm{x} \in {\cal N}} \lambda(\bm{x}) \,,
$$
does not blow-up when we take successive refinements of the initial triangulation $\mesh_0$.

\begin{definition}[$\Lambda$-admissible partitions]\label{def:Lambda-partitions}
Given a constant $\Lambda \geq 1$, a non-conforming partition $\mesh$ is said to be $\Lambda$-admissible  if 
$$
\Lambda_{\mesh} \leq \Lambda \,.
$$
\end{definition}

Starting from the initial conforming partition $\mesh_0$ (which is trivially $\Lambda$-admissible), all the subsequent non-conforming partitions generated by the module $\texttt{REFINE}$ introduced later on will remain $\Lambda$-admissible, due to the algorithm $\texttt{MAKE\_ADMISSIBLE}$ described in Section \ref{sec:experiments}.

\begin{remark}\label{rem:bound-global-index}
{\rm
The condition that $\mesh$ is $\Lambda$-admissible has the following implications for each element $E \in \mesh$:

i) If $L \subset \partial E$ is one of the three sides of the triangle $E$, then $L$ may contain at most $2^\Lambda-1$ hanging nodes; consequently, $|{\cal N}_E| \leq 3\cdot 2^\Lambda$.

ii) If $e \subset \partial E$ is any edge , then $h_e \simeq h_E$, where the hidden constants only depend on the shape of the initial triangulation and possibly on $\Lambda$.
}
\end{remark}
\noindent
Fig. \ref{fig:sample_element} displays examples that illustrate the dynamic change of
  $\lambda(\bm{x})$ for a given $\bm{x} \in {\cal N}$.
\begin{figure}[t!]
\begin{center}
\begin{overpic}[scale=0.25]{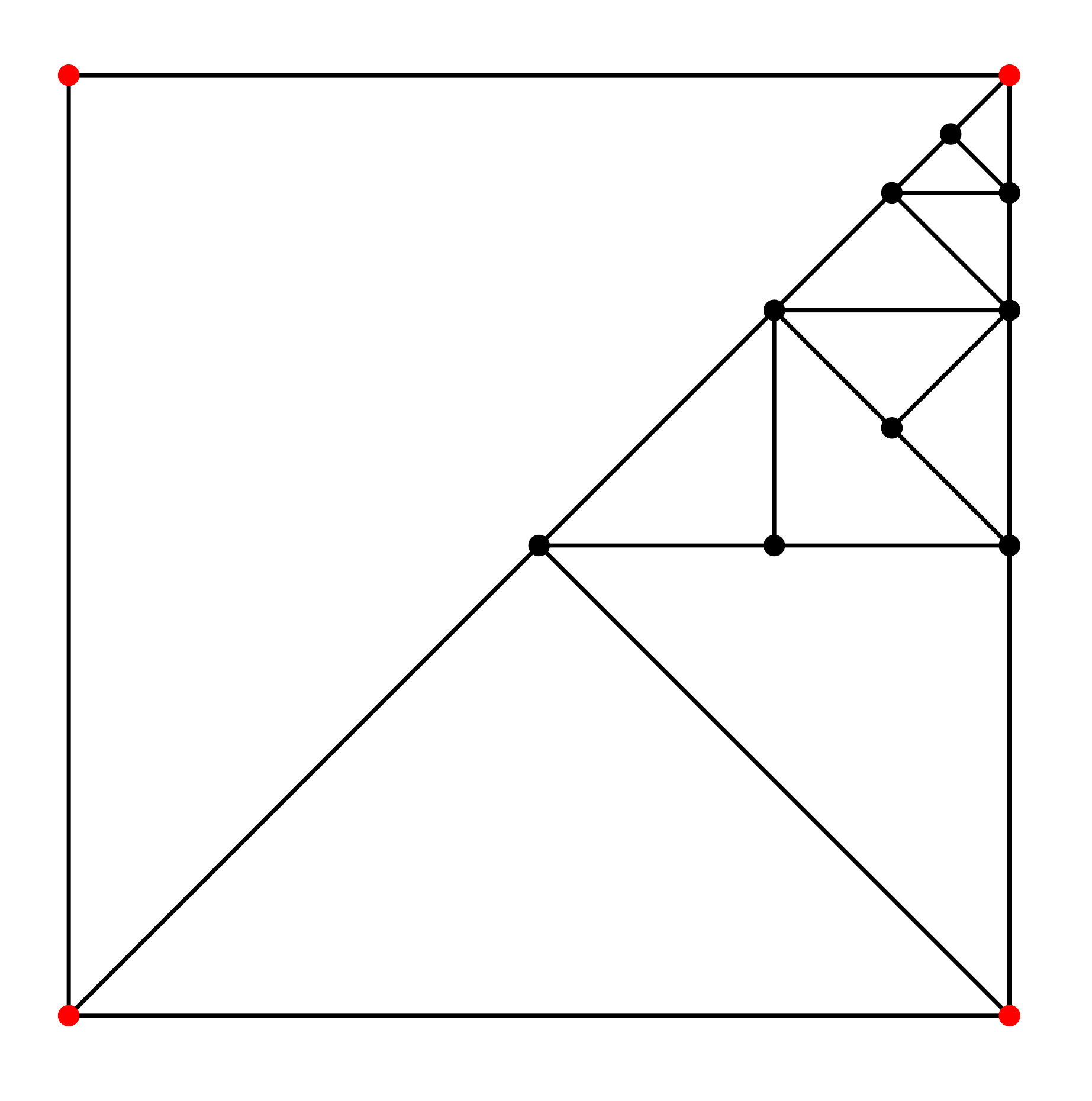}
\put( 2, 2){$0$}
\put( 2,95){$0$}
\put(92, 2){$0$}
\put(92,95){$0$}
\put(45,52){$1$}
\put(93,52){$1$}
\put(65,70){$2$}
\put(69,44){$2$}
\put(93,70){$2$}
\put(75,80){$3$}
\put(79,54){$3$}
\put(93,80){$3$}
\put(82,87){$4$}
\end{overpic}
\quad 
\begin{overpic}[scale=0.25]{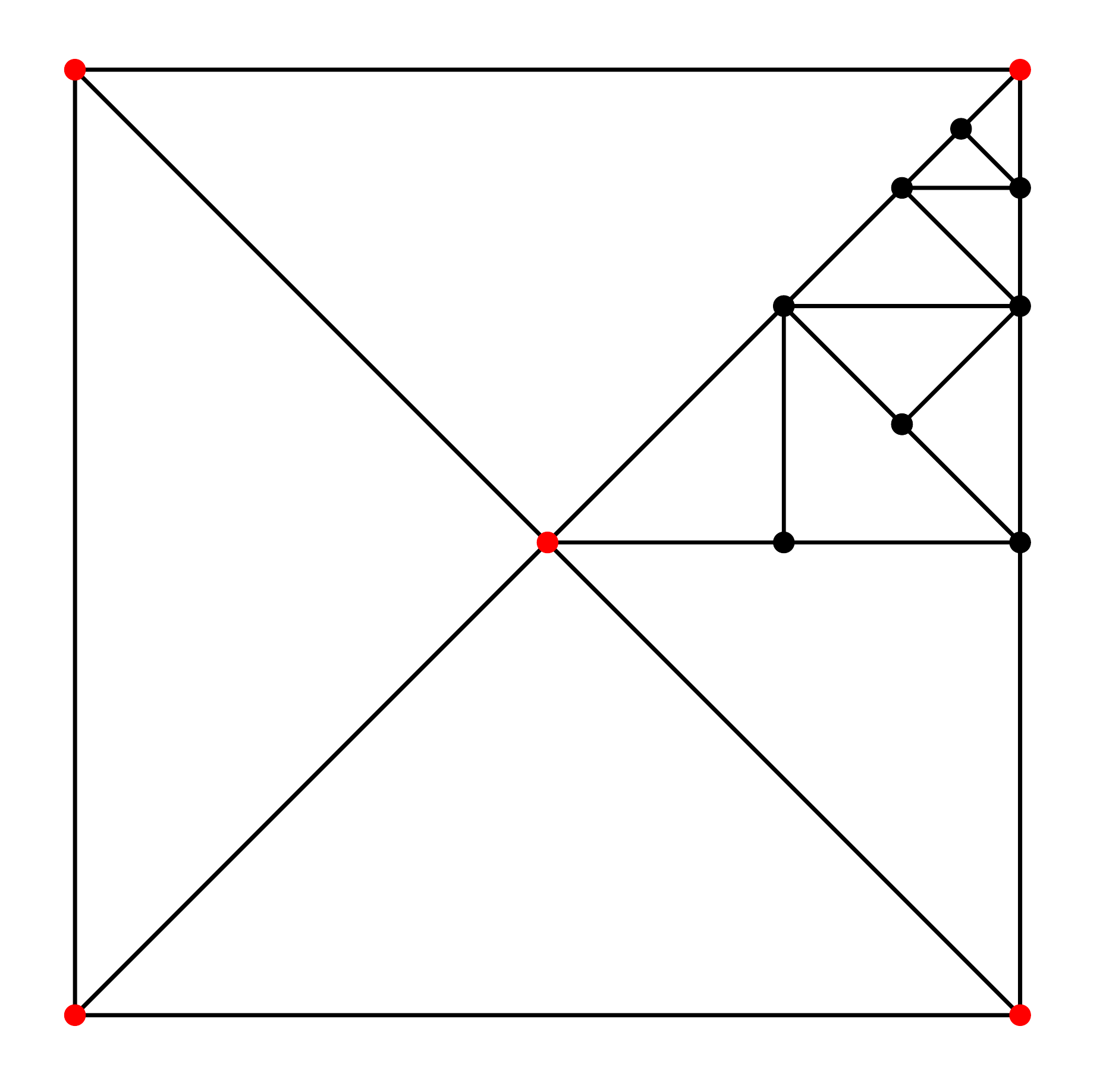}
\put( 2, 2){$0$}
\put( 2,95){$0$}
\put(94, 2){$0$}
\put(94,95){$0$}
\put(49,53){$0$}
\put(95,53){$1$}
\put(65,70){$1$}
\put(69,44){$2$}
\put(95,70){$2$}
\put(75,80){$2$}
\put(79,54){$2$}
\put(95,80){$3$}
\put(82,87){$3$}
\end{overpic}
\quad 
\begin{overpic}[scale=0.25]{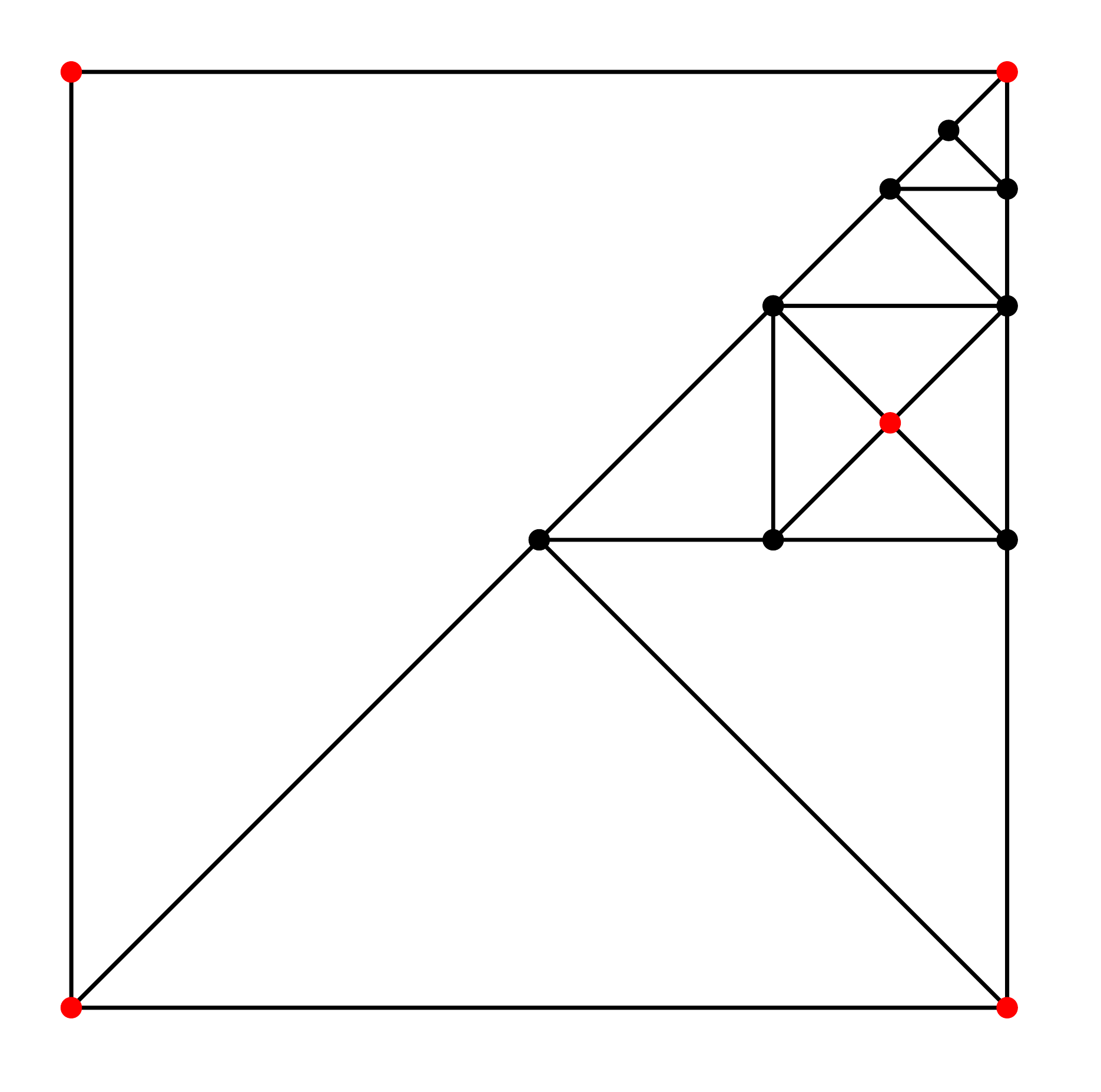}
\put( 2, 2){$0$}
\put( 2,95){$0$}
\put(92, 2){$0$}
\put(92,95){$0$}
\put(45,52){$1$}
\put(93,52){$1$}
\put(65,70){$2$}
\put(69,44){$2$}
\put(93,70){$2$}
\put(75,80){$3$}
\put(79,54){$0$}
\put(93,80){$3$}
\put(82,87){$4$}
\end{overpic}
\end{center}
\caption{Three examples of distributions of proper nodes (red) and hanging nodes (black), with associated global indices $\lambda$. The bisection added in the middle picture converts the centered node into proper, and induces nonlocal changes of global indices on chains associated with it. If $\Lambda=3$, then the leftmost mesh is not admissible and this procedure is instrumental to restore admissibility. The right picture illustrates the creation of a proper node without nonlocal effects on global indices.
}
\label{fig:sample_element}
\end{figure}

\subsection{VEM spaces and projectors}

In order to define a space of discrete functions in $\Omega$ associated with $\mesh$, for each element $E \in \mesh$ let us first introduce the space of continuous, piecewise affine functions on $\partial E$
\begin{equation}\label{eq:cond-VdE}
\VdE := \{ v \in {\cal C}^0(\partial E) : v_{|e} \in \mathbb{P}_1(e) \ \forall e \in \edgeE \}.
\end{equation}
Then, one needs to introduce a finite dimensional space $\VE \subset {\cal C}^0(E)$ satisfying the three following properties:
\begin{equation}\label{eq:cond-VE}
\text{dim}\, \VE = |\nodesE|\,, \qquad  \mathbb{P}_1(E) \subseteq \VE \,, \qquad  \tau_{\partial E}(\VE) = \VdE \,,
\end{equation}
where $ \tau_{\partial E}$ is the trace operator on the boundary of $E$. Obviously, if $E$ is a proper triangle, then $\VE = \mathbb{P}_1(E)$ is the usual space of affine functions in $E$; otherwise, note that a function in $\VE$ is uniquely identified by its trace on $\partial E$, but its  value in the interior of $E$ must be defined. 

The results of the present paper apply to any generic VEM space satisfying the conditions above and a suitable stability property introduced below. The well known examples are the basic VEM space of \cite{volley}
\begin{equation}\label{vem:basic} 
\VE := \big\{ v \in H^1(E) \ : \ v|_{\partial E} \in \VdE , \ \Delta v =0 \big\} 
\end{equation}
and the more advanced ``enhanced'' space from \cite{projectors,BBMR:2016} 
\begin{equation}\label{vem:choice:2} 
\VE := \big\{ v \in H^1(E) \ : \ v|_{\partial E} \in \VdE , \ \Delta v \in \mathbb{P}_1(E) \, ,
\int_E (v - \PiE v) q_1 = 0 \ \forall q_1 \in \mathbb{P}_1(E)
\big\} \,,
\end{equation}
where the projector $\PiE : H^1(E) \to \mathbb{P}_1(E)$ is defined by the conditions
\begin{equation}\label{eq:def-PinablaE}
(\nabla (v - \PiE v), \nabla w)_E = 0 \quad \forall w \in \mathbb{P}_1(E), \qquad  \int_{\partial E} (v-\PiE v) = 0 \, .
\end{equation}
It is easy to check that the above spaces are well defined and satisfy conditions \eqref{eq:cond-VE}.

Once the local spaces $\VE$ are defined, we introduce the global discrete space
\begin{equation}\label{eq:def-VT}
\Vmesh := \{ v \in \V :  \ v_{|E} \in \VE \ \ \forall E \in \mesh \}\,.
\end{equation} 
Note that functions in $\Vmesh$ are piecewise affine on the skeleton $\edge$, and indeed they are uniquely determined by their values therein and are globally continuous. Introducing the spaces of piecewise polynomial functions on $\mesh$
\begin{equation}\label{eq:def-Wk}
\mathbb{W}_\mesh^k :=\{ w \in L^2(\Omega) : w_{|E} \in \mathbb{P}_k(E) \ \ \forall E \in \mesh \}\,,  \qquad k=0,1\,,
\end{equation}
we also define the subspace of continuous, piecewise affine functions on $\mesh$
\begin{equation}\label{eq:def-VT0}
\Vmeshz := \Vmesh \cap \Wmeshu \,,
\end{equation} 
which will play a key role in the sequel. 

\medskip
The discretization of Problem \eqref{eq:pde} will involve certain projection operators, that we are going to define locally and then globally. To this end, let $\Pimesh : \Vmesh \to \Wmeshu$ be the operator that restricts to $\PiE$ on each $E \in \mesh$. Similarly, let $\IE : \VE \to \mathbb{P}_1(E)$ be the Lagrange interpolation operator at the vertices of $E$, and let $\Imesh : \Vmesh \to \Wmeshu$ be the Lagrange interpolation operator that restricts to $\IE$ on each $E \in \mesh$. Note that $\Pimesh v = v$ and $\Imesh v =v$ for all $v \in \Vmeshz$. Finally, let $\PiEz : L^2(E) \to \mathbb{P}_1(E)$, resp. $\Pimeshz : L^2(\Omega) \to \Wmeshu$, be the local, resp. global, $L^2$-orthogonal projection operator.

Using an integration by parts, it is easy to check that the $\PiE$ operator is directly computable from the boundary values of $v \in \VE$, and the same clearly holds for $\IE$. On the contrary, on a general VEM space the operator $\PiEz$ may be not computable. A notable exception is given by the space \eqref{vem:choice:2}, since by definition of the space it easily follows the following property:
\begin{equation}\label{enh-prop}
\textrm{For the local space choice \eqref{vem:choice:2} the operators } \PiEz \textrm{ and } \PiE \textrm{ coincide.}
\end{equation} 

\subsection{The discrete problem}\label{sec:discrete-pb}
 
Next, we introduce the discrete bilinear forms to be used in a Galerkin discretization of our problem. Here we make a simplifying assumption on the coefficients of the equation, in order to arrive at the core of our contribution without too much technical burden. In Sect. \ref{sec:extensions} we will discuss the general situation.

\begin{assumption}[coefficients and right-hand side of the equation]\label{ass:constant-coeff}
The data  $\data=(A,c,f)$ in \eqref{eq:pde} are constant in each element $E$ of $\mesh$; their values will be denoted by $(A_E,c_E,f_E)$.
\end{assumption}

Under this assumption, define  $\amesh, \mmesh : \Vmesh \times \Vmesh \to \mathbb{R}$ by
\begin{equation}\label{eq;def-aT}
\begin{aligned}
& \amesh(v,w) : = \sum_{E \in \mesh} \int_E  (A_E \nabla \PiE v) \cdot \nabla \PiE w =: \sum_{E \in \mesh} \aE(v,w) \, ,  \\
& \mmesh(v,w) : = \sum_{E \in \mesh} c_E \int_E \PiE v \, \PiE w =: \sum_{E \in \mesh} \mE(v,w) \,.
\end{aligned}
\end{equation}

Next, for any $E \in \mesh$, we introduce the symmetric bilinear form $\sE : \VE \times \VE \to \mathbb{R}$ 
\begin{equation}\label{eq:stab-dofidofi}
\sE(v,w) = \sum_{i=1}^{\nodesE} v({\bf x}_i) w({\bf x}_i) \ ,
\end{equation}
with $\{ {\bf x}_i \}_{i=1}^{\nodesE}$ denoting the vertexes of $E$. Such form will take the role of a stabilization in the numerical method; other choices for the stabilizing form are available in the literature and the results presented here easily extend to such cases.
We assume the following condition, stating that the local virtual spaces $\VE$ constitute a ``stable lifting'' of the element boundary values:
\begin{equation}\label{eq:stab-sE}
c_s | v |_{1,E}^2 \leq \sE(v,v) \leq C_s | v |_{1,E}^2 \qquad \forall v \in \VE  / {\mathbb R} \,,
\end{equation}
for constants $C_s \geq c_s > 0$ independent of $E$. 
For a proof of \eqref{eq:stab-sE} for some typical choices of $\VE$ and $\sE$ we refer to \cite{BLR:2017,brenner-sung:2018}; in particular,
the result holds for the choices \eqref{vem:basic} or \eqref{vem:choice:2}, and \eqref{eq:stab-dofidofi}.  
With the local form $s_E$ at hand, we define the local stabilizing form
\begin{equation}\label{eq:def-SE}
\SE(v,w) := \sE(v-\IE v, w-\IE w) \qquad \forall \, v,w\in \VE \,,
\end{equation}
as well as the global stabilizing form
\begin{equation}\label{eq:def-ST}
\Smesh(v,w) :=  \sum_{E \in \mesh}  \SE(v,w) \qquad \forall \, v,w\in \Vmesh \,.
\end{equation}
Note that from  \eqref{eq:stab-sE} we obtain
\begin{equation}\label{eq:stab-norm}
\Smesh(v,v) \simeq | v - \Imesh v|_{1,\mesh}^2 \qquad \forall v \in \Vmesh\,,
\end{equation}
where $ | \, \cdot \, |_{1,\mesh}$ denotes the broken $H^1$-seminorm over the mesh $\mesh$.

\medskip
Finally, for all $v,w \in \Vmesh$ we define the complete bilinear form 
\begin{equation}\label{eq:def-BT}
\Bmesh : \Vmesh \times \Vmesh \to \mathbb{R}\,, \qquad 
\Bmesh(v,w) := \amesh(v,w) + \gamma \Smesh(v,w) + \mmesh(v,w ) \,, 
\end{equation} 
where $\gamma \geq \gamma_0$ for some fixed $\gamma_0 >0$ is a stabilization constant independent of $\mesh$, which will be chosen later on.

The following properties are an easy consequence of the definitions and bounds outlined above.

\begin{lemma}[properties of bilinear forms]\label{prop:bilin-forms}
 i) For any $v \in \Vmesh$ and any $w \in \Vmeshz$, it holds
\begin{equation}\label{eq:propB1}
\amesh(v,w) = a(v,w)\,, \qquad \Smesh(v,w) = 0\,.  
\end{equation}
ii) The form $\Bmesh$ satisfies 
\begin{equation}\label{eq:propB2}
\beta |v |_{1,\Omega}^2 \leq \Bmesh(v,v),  \qquad |\Bmesh(v,w)| \leq B |v|_{1,\Omega} |w|_{1,\Omega}\,, \qquad \forall v,w \in \Vmesh\,,  
\end{equation}
with continuity and coercivity constants $B \geq \beta >0$ independent of the triangulation $\mesh$.  
The constant $\beta$ is a non-decreasing function of $\gamma$; hence, if we increase the value of $\gamma$ there is no risk of getting a vanishing $\beta$.
\end{lemma}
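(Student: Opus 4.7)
The plan is to prove the two parts in order, leaning on Assumption \ref{ass:constant-coeff} for (i) and on the stability of $s_E$ from \eqref{eq:stab-sE} for (ii). First, for any $w \in \Vmeshz$ the restriction $w|_E$ lies in $\mathbb{P}_1(E)$ on every $E$, so both $\PiE w = w$ and $\IE w = w$. This immediately gives $\Smesh(v,w) = \sum_E \sE(v-\IE v, 0) = 0$. For $\amesh(v,w) = a(v,w)$, the constancy of $A_E$ and the affinity of $w|_E$ imply that $A_E \nabla w|_E$ is a constant vector in $E$, so it equals $\nabla q_E$ for some $q_E \in \mathbb{P}_1(E)$. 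Applying the $H^1$-orthogonality \eqref{eq:def-PinablaE} with the test function $q_E$ yields
\begin{equation*}
  \int_E A_E \nabla \PiE v \cdot \nabla w = \int_E \nabla \PiE v \cdot \nabla q_E = \int_E \nabla v \cdot \nabla q_E = \int_E A_E \nabla v \cdot \nabla w,
\end{equation*}
and summing over $\mesh$ produces the first identity.

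For the \emph{continuity} in (ii), I would estimate the three terms of $\Bmesh$ separately: $\amesh$ via the $L^\infty$-bound on $A$ and the $H^1$-seminorm contractivity of $\PiE$ (which follows by choosing $w=\PiE v$ in \eqref{eq:def-PinablaE}); $\mmesh$ via the $L^\infty$-bound on $c$, the $L^2$-stability of $\PiE$ (obtained from the boundary-average condition in \eqref{eq:def-PinablaE} and a local Poincar\'e inequality on $E$), and the Friedrichs inequality in $\V=H^1_0(\Omega)$; and $\Smesh$ via Cauchy--Schwarz for $s_E$ elementwise, the norm equivalence \eqref{eq:stab-norm}, and the VEM stability bound $|v - \IE v|_{1,E} \lesssim |v|_{1,E}$. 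For the \emph{coercivity}, drop the non-negative $\mmesh$ term and use the uniform positive-definiteness of $A$ to get $\Bmesh(v,v) \geq a_* |\Pimesh v|_{1,\mesh}^2 + \gamma \Smesh(v,v)$. Next, the $H^1$-Pythagorean identity entailed by \eqref{eq:def-PinablaE}, together with the best-approximation property $|v - \PiE v|_{1,E} \leq |v - \IE v|_{1,E}$ and \eqref{eq:stab-norm}, give
\begin{equation*}
  |v|_{1,\Omega}^2 \;=\; \sum_{E \in \mesh} \bigl( |\PiE v|_{1,E}^2 + |v - \PiE v|_{1,E}^2 \bigr) \;\leq\; |\Pimesh v|_{1,\mesh}^2 \;+\; C\, \Smesh(v,v)
\end{equation*}
for some $C>0$ depending only on the constants in \eqref{eq:stab-sE}. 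Setting $\beta := \min(a_*,\, \gamma/C)$ then yields $\Bmesh(v,v) \geq \beta|v|_{1,\Omega}^2$, and this $\beta$ is manifestly non-decreasing in $\gamma$.

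The only delicate point is the stability estimate $|v - \IE v|_{1,E} \lesssim |v|_{1,E}$ on the VEM space, which enters the continuity of $\Smesh$. It is not automatic, because $\IE v$ is pinned only at the three triangle vertices while $v|_{\partial E}$ may display many kinks at the hanging nodes. The bound can be obtained through the equivalence \eqref{eq:stab-norm} combined with a scaled trace inequality on each edge $e \subset \partial E$; the crucial geometric input is that $\Lambda$-admissibility forces $h_e \simeq h_E$ for every edge (cf. Remark \ref{rem:bound-global-index}), which keeps the constant independent of the refinement history. Once this step is in place, the remainder of the lemma is pure bookkeeping.
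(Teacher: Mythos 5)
Your proof is correct and fills in, with the same underlying ideas, what the paper dispatches in a two-sentence sketch: part (i) rests on $\nabla\PiE v$ being the $L^2(E)$-projection of $\nabla v$ onto constant vector fields together with $\PiE w = \IE w = w$ for $w\in\Vmeshz$, and part (ii) on the equivalence \eqref{eq:stab-norm} and the $H^1$-Pythagorean identity for $\PiE$. For the one step you rightly flag as delicate, namely $|v-\IE v|_{1,E}\lesssim |v|_{1,E}$ uniformly, a route slightly more direct than scaled trace inequalities is available: since $\sE(v-\IE v,\,v-\IE v)$ is unchanged under adding a constant to $v$ (because $\IE(v-c)=\IE v - c$), one may assume the mean nodal value of $v$ over $\nodesE$ vanishes, so that \eqref{eq:stab-sE} yields $\sE(v,v)\leq C_s|v|_{1,E}^2$; the maximum principle for the affine $\IE v$ then gives $\sE(\IE v,\IE v)\leq |\nodesE|\,\sE(v,v)$, and hence $c_s|v-\IE v|_{1,E}^2 \leq \sE(v-\IE v,\,v-\IE v)\leq 2\bigl(1+|\nodesE|\bigr)\sE(v,v)\lesssim |v|_{1,E}^2$, with the constant controlled through $|\nodesE|\leq 3\cdot 2^\Lambda$ from Remark \ref{rem:bound-global-index}.
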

\begin{proof} Condition {\it i)}  follows easily recalling Assumption \ref{ass:constant-coeff} and noting that $\nabla \PiE v$ corresponds to the $L^2(E)$ projection of $\nabla v$ on the constant vector fields (living on E).
Condition { \it ii)} follows from \eqref{eq:stab-norm} with trivial arguments.
\end{proof}

Regarding the approximation of the loading term, we here consider 
\begin{equation}\label{discr-rhs}
{\cal F}_{\mesh} (v) := \sum_{E \in \mesh} f_E \int_E  \PiE v \qquad \forall v \in H^1_0(\Omega) \,. 
\end{equation}

We now have all the ingredients to set the Galerkin discretization of Problem \eqref{eq:pde}:  
{\it find $\umesh \in \Vmesh$ such that}
\begin{equation}\label{def-Galerkin}
\Bmesh(\umesh,v) = {\cal F}_{\mesh} (v) \qquad \forall v \in \Vmesh \,.
\end{equation}
Lemma \ref{prop:bilin-forms} guarantees existence, uniqueness and stability of the Galerkin solution.
We now establish a useful version of Galerkin orthogonality.

\begin{lemma}[Galerkin quasi-orthogonality]\label{L:Galerkin-orthogonality}
The solutions $u$ of \eqref{eq:pde-var} and $\umesh$ of \eqref{def-Galerkin} satisfy
\begin{equation}\label{eq:Galerkin-orthogonality}
  \cB(u-\umesh,v) = \sum_{E\in\mesh} c_E \int_E \big(\Pimesh \umesh - \umesh  \big) v
  \qquad\forall \, v\in \V_\mesh^0.
\end{equation}
In particular, the choice \eqref{vem:choice:2} of enhanced VEM space further implies
\begin{equation}\label{eq:PGO}
\cB(u-\umesh,v) = 0  \qquad\forall \, v\in \V_\mesh^0.
\end{equation}
\end{lemma}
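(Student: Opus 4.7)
The plan is to subtract the continuous and discrete variational identities, tested against a common $v \in \Vmeshz \subset \V$, and then exploit the two structural facts provided by Lemma \ref{prop:bilin-forms}(i): that $\amesh(\umesh,v)=a(\umesh,v)$ and $\Smesh(\umesh,v)=0$ whenever $v \in \Vmeshz$. The remainder of the argument will be purely manipulative, and the only non-trivial simplifications come from the observation that the projector $\PiE$ acts as the identity on $\mathbb{P}_1(E)$.

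I would proceed as follows. First, since $\Vmeshz \subset \V = H^1_0(\Omega)$, testing \eqref{eq:pde-var} against $v \in \Vmeshz$ gives $\cB(u,v) = (f,v)_\Omega$. Next, testing \eqref{def-Galerkin} against the same $v$ and using Lemma \ref{prop:bilin-forms}(i), I obtain
\[
a(\umesh,v) + \mmesh(\umesh,v) = {\cal F}_\mesh(v).
\]
Subtracting, I find
\[
\cB(u-\umesh,v) = \bigl[(f,v)_\Omega - {\cal F}_\mesh(v)\bigr] + \bigl[\mmesh(\umesh,v) - m(\umesh,v)\bigr].
\]
Since $v|_E \in \mathbb{P}_1(E)$ for every $E\in\mesh$, the projector satisfies $\PiE v = v$ on each element. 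Combined with Assumption \ref{ass:constant-coeff} ($f$ piecewise constant, $f|_E=f_E$), this yields
\[
(f,v)_\Omega = \sum_{E\in\mesh} f_E \int_E v = \sum_{E\in\mesh} f_E \int_E \PiE v = {\cal F}_\mesh(v),
\]
so the first bracket vanishes. For the second bracket, using $\PiE v = v$ once more,
\[
\mmesh(\umesh,v) - m(\umesh,v) = \sum_{E\in\mesh} c_E \int_E \bigl(\PiE \umesh - \umesh\bigr) \PiE v = \sum_{E\in\mesh} c_E \int_E \bigl(\Pimesh \umesh - \umesh\bigr) v,
\]
which establishes \eqref{eq:Galerkin-orthogonality}.

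Finally, to obtain \eqref{eq:PGO} for the enhanced VEM space \eqref{vem:choice:2}, I invoke property \eqref{enh-prop}, which states that on this space $\PiE$ coincides with the $L^2$-projection $\PiEz$ onto $\mathbb{P}_1(E)$. Consequently, for every $q_1 \in \mathbb{P}_1(E)$,
\[
\int_E \bigl(\Pimesh \umesh - \umesh\bigr) q_1 = 0.
\]
Choosing $q_1 = v|_E$ (which lies in $\mathbb{P}_1(E)$ since $v \in \Vmeshz$) eliminates each summand on the right-hand side of \eqref{eq:Galerkin-orthogonality}, giving \eqref{eq:PGO}. I do not anticipate any real obstacle: the only subtlety to be careful about is tracking that $\PiE v = v$ holds because $v$ is already affine on $E$, which is precisely what distinguishes $\Vmeshz$ from general elements of $\Vmesh$.
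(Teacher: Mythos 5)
Your proof is correct and follows essentially the same route as the paper's: subtract the continuous and discrete variational identities tested against $v\in\Vmeshz$, invoke the consistency properties from Lemma \ref{prop:bilin-forms}(i), use $\PiE v=v$ (since $v|_E\in\mathbb{P}_1(E)$) to cancel the forcing terms and simplify the mass term, and finally apply the defining moment condition of the enhanced space \eqref{vem:choice:2} to kill the remainder. Your write-up is a bit more explicit than the paper's about why $(f,v)_\Omega={\cal F}_\mesh(v)$ and why the enhanced-space sum vanishes, but there is no substantive difference.
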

\begin{proof}
The definitions \eqref{eq:pde-var} and \eqref{def-Galerkin} imply
\begin{equation*}
\cB(u-\umesh,v) = 
\big( (f,v)_\Omega - {\cal F}_\mesh(v) \big)+ 
\big( \Bmesh(u_\mesh,v)-\cB(u_\mesh,v) \big)
\qquad\forall \, v\in\V_\mesh.
\end{equation*}
If $v\in\V_\mesh^0$, then $\Pimesh v = v$ and ${\cal F}_\mesh(v) = (f,v)_\Omega$ according to
\eqref{discr-rhs}. On the other hand, \eqref{eq:propB1} yields
\begin{equation*}
  \Bmesh(\umesh,v)-\cB(\umesh,v) = m_\mesh(\umesh,v) - m(\umesh,v) =
  \sum_{E\in\mesh} c_E \int_E \big( \PiE \umesh - \umesh\big) v
  \qquad\forall \, v\in \V_\mesh^0,
\end{equation*}
which in turn leads to \eqref{eq:Galerkin-orthogonality}. Finally, for the choice \eqref{vem:choice:2} the right-hand
side of \eqref{eq:Galerkin-orthogonality} vanishes because $v\in \mathbb{P}_1(E)$ for all $E\in\mesh$.
This completes the proof.
\end{proof}

We finally remark that Galerkin quasi-orthogonality  easily implies the useful estimate
\begin{equation}\label{aux:PGO}
\vert \mathcal{B}(u-\umesh,v)\vert \lesssim S_\mesh (\umesh,\umesh)^{1/2} \vert v\vert_{1,\Omega}\qquad \forall v\in \Vmesh^0.
\end{equation}

\section{Preparatory results}\label{sec:prep}

In preparation for the subsequent a posteriori error analysis, we collect here some useful results involving functions in $\Vmesh$ or in $\Vmeshz$. 

The first result is a scaled Poincar\'e inequality in $\Vmesh$, which will be crucial in the sequel. We recall that ${\cal P}$ denotes the set of proper nodes in $\mesh$.

\begin{proposition}[scaled Poincar\'e inequality in $\Vmesh$]\label{prop:scaledPoincare}
There exists a constant $C_\Lambda>0$ depending on $\Lambda$ but independent of $\mesh$, such that 
\begin{equation}\label{eq:B0}
\sum_{E \in \mesh}  h_E^{-2} \Vert v \Vert_{0,E}^2 \leq C_\Lambda  | v |_{1,\Omega}^2 \qquad \forall v \in \Vmesh \text{ such that } v(\bm{x})=0 \ \forall \bm{x} \in {\cal P}.
\end{equation}
\end{proposition}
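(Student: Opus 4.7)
My plan is to reduce the global inequality to a nodal sum via an element-level Poincar\'e-type step, and then control that sum by telescoping along chains inherited from the NVB refinement history. For each $E\in\mesh$, I select a vertex $\bm{x}_E$ of $E$ and decompose $v=(v-v(\bm{x}_E))+v(\bm{x}_E)$. A bound of the form $\|w\|_{0,E}^2 \lesssim h_E^2\,\sE(w,w)$ valid for all $w\in\VE$ (a consequence of the uniform estimate $\|w\|_{L^\infty(E)}\lesssim \max_i|w(\bm{x}_i)|$ for VEM extensions on shape-regular elements), combined with the stabilization equivalence \eqref{eq:stab-sE} in the quotient by constants, yields
$$\|v-v(\bm{x}_E)\|_{0,E}^2 \;\lesssim\; h_E^2\sum_{i=1}^{|\nodesE|}\big(v(\bm{x}_i)-v(\bm{x}_E)\big)^2 \;\lesssim\; h_E^2\,|v|_{1,E}^2,$$
with hidden constants depending on $|\nodesE|\leq 3\cdot 2^\Lambda$. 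Using $|E|=h_E^2$ and summing over $E$ reduces the proof to
$$\sum_{E\in\mesh} v(\bm{x}_E)^2 \;\lesssim\; |v|_{1,\Omega}^2.$$

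\noindent\textbf{Nodal jumps and telescoping.} The key tool for the residual nodal sum is the jump bound: for any $\bm{x}_i,\bm{x}_j\in\nodesE$,
$$\big(v(\bm{x}_i)-v(\bm{x}_j)\big)^2 \;\lesssim\; |v|_{1,E}^2,$$
an immediate consequence of $\inf_c \sE(v-c,v-c) \simeq |v|_{1,E}^2$. I then claim that for every hanging node $\bm{y}$ with $\lambda(\bm{y})=k$ there exists a chain $\bm{y}=\bm{z}_0,\bm{z}_1,\dots,\bm{z}_m\in\mathcal{P}$ with $m\leq C(\Lambda)$ such that consecutive nodes $\bm{z}_\ell,\bm{z}_{\ell+1}$ both belong to $\mathcal{N}_{E_\ell}$ for some $E_\ell\in\mesh$. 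Since $v(\bm{z}_m)=0$, the jump bound and a telescoping sum give
$$v(\bm{y})^2 \;\leq\; m\sum_{\ell=0}^{m-1}\big(v(\bm{z}_\ell)-v(\bm{z}_{\ell+1})\big)^2 \;\lesssim\; \Lambda\sum_\ell |v|_{1,E_\ell}^2.$$
Choosing $\bm{x}_E$ to minimize $\lambda$ among the vertices of $E$, and using that (thanks to $h_e\simeq h_E$ from Remark \ref{rem:bound-global-index} and shape regularity) each element appears in at most $C(\Lambda)$ such chains, summing over $E$ produces $\sum_E v(\bm{x}_E)^2 \lesssim C_\Lambda\,|v|_{1,\Omega}^2$, and the proof is complete.

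\noindent\textbf{Main obstacle.} The technical heart of the argument is the existence of the chain with length bounded uniformly in terms of $\Lambda$. I would prove it by induction on $k=\lambda(\bm{y})$: the base case $k=0$ is trivial since $\bm{y}\in\mathcal{P}$, and for $k\geq 1$, $\bm{y}$ is the midpoint of an edge $[\bm{y}',\bm{y}'']$ of a parent triangle in the NVB tree with $\lambda(\bm{y}'),\lambda(\bm{y}'')\leq k-1$. One must exhibit in $\mesh$ an element $E_0$ with $\bm{y}\in\mathcal{N}_{E_0}$ and a node $\bm{w}\in\mathcal{N}_{E_0}$ of strictly smaller global index, so that the inductive hypothesis applied to $\bm{w}$ extends the chain. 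Verifying this requires careful tracking of how the two bisection children of the parent triangle may be further refined in $\mesh$, showing that in every leaf descendant lying in $\mesh$ a ``smaller-$\lambda$'' neighbor of $\bm{y}$ is reachable within $O(1)$ hops, with admissibility \eqref{eq:global-index-intro} bounding the total number of hops by $O(\Lambda)$. This combinatorial argument, rather than any analytic inequality, is where the proof becomes delicate and where the role of $\Lambda$-admissibility is essential.
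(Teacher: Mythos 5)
Your reduction to a nodal sum mirrors the paper's Step~1, but the path to a proper node is a genuinely different route: you telescope along the hierarchical relation $\mathcal{B}$ (so the \emph{individual} global index drops by at least one per hop), whereas the paper's Steps~2--4 telescope along the chain ${\cal A}_M(E)$ of NVB ancestors of $E$, with $\bm{x}_0$ chosen as the newest vertex (not the vertex of minimal $\lambda$), tracking the decrease of the \emph{cumulative} index $\lambda(T)=\sum_i\lambda(\bm{x}_i)$; since those ancestors are generally not in $\mesh$, the paper then has to split each ancestor edge $g_m$ into $\leq 2^\Lambda$ mesh-level edges $e_n$. If your route is filled in, it is conceptually tighter because the chain lives inside elements of $\mesh$ from the outset and has length $\leq\Lambda$ rather than the paper's $N_E<3\Lambda 2^\Lambda$. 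One caveat on the local step: the $L^\infty$ maximum-principle bound $\|w\|_{L^\infty(E)}\lesssim\max_i|w(\bm{x}_i)|$ you invoke is a property of the harmonic extension \eqref{vem:basic} but not of a generic $\VE$ (in particular it is unclear for \eqref{vem:choice:2}); the paper commits only to \eqref{eq:stab-sE}, and the bound $\|w\|_{0,E}^2\lesssim h_E^2\,\sE(w,w)$ that you actually use can, and should, be derived from \eqref{eq:stab-sE} together with a Poincar\'e inequality for $w-\overline{w}$, with constants $\lesssim 2^\Lambda$.

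The genuine gap is in the chain argument and the overlap count, which you flag but do not resolve. The decisive fact your sketch does not isolate is that if $\bm{y}$ is a hanging node for $E'\in\mesh$ and $\mathcal{B}(\bm{y})=\{\bm{y}',\bm{y}''\}$, then \emph{both} $\bm{y}'$ and $\bm{y}''$ already lie on $\partial E'$, hence in $\nodesE'$: the bisected segment $[\bm{y}',\bm{y}'']$ and the side $L\subset\partial E'$ carrying $\bm{y}$ are nested in the NVB edge hierarchy, and $L$ cannot be a proper descendant of $[\bm{y}',\bm{y}'']$, since then $\bm{y}$ would be an endpoint of $L$ and not an interior hanging node; therefore $[\bm{y}',\bm{y}'']\subseteq L$. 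This gives \emph{one} element per index-decrement, with $E_0 = E'$, and avoids the ``reachable within $O(1)$ hops'' induction you propose, which as written would need a separate argument to bound the length. Likewise, ``each element appears in at most $C(\Lambda)$ chains'' is the combinatorial heart (the counterpart of the paper's Step~6) and does not follow from shape regularity alone: one must invoke $\Lambda$-admissibility once to get $h_{E_\ell}\simeq h_{E_{\ell+1}}$ along the chain via Remark~\ref{rem:bound-global-index} (so the chain is confined to a ball of radius $\lesssim\Lambda h_{E_0}$ made of comparably sized elements), and again to bound the backward branching of $\mathcal{B}$ (how many hanging nodes of $\mesh$ can have a given $\bm{z}$ as a bisection endpoint), before a packing argument closes the count. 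Without these, the crucial inequality $\sum_E v(\bm{x}_E)^2\lesssim|v|_{1,\Omega}^2$ is an assertion rather than a proof, and that is exactly the part carrying the $\Lambda$-dependence in the final constant $C_\Lambda$.
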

\noindent Due to the technical nature of the proof, we postpone it to Sect. \ref{sec:poincare}.

\medskip 
Next, we go back to the space $\Vmeshz$  introduced in \eqref{eq:def-VT0}. We note that functions in this space  are uniquely determined by their values at the proper nodes of $\mesh$. Indeed, a function $v \in  \Vmeshz$ is affine in each element of $\mesh$, hence, it is uniquely determined by its values at the three vertices of the element:  if the vertex $\bm{x}$ is a hanging node, with ${\cal B}(\bm{x})=\{\bm{x}', \bm{x}''\}$, then $v(\bm{x})=\frac12 \big(v(\bm{x}')+v(\bm{x}'')\big)$. 

In particular, $\Vmeshz$ is span by the Lagrange basis 
\begin{equation}\label{eq:Lagrange}
\forall \bm{x} \in {\cal P}: \qquad \psi_{\bm{x}} \in \Vmeshz \quad \text{satisfies} \quad 
\psi_{\bm{x}}(\bm{z})= \begin{cases}
1 & \text{if }  \bm{z}=\bm{x}\,, \\
0 & \text{if }  \bm{z} \in {\cal P}\setminus \{ \bm{x} \}
\end{cases}
\end{equation} 
(see Fig. \ref{fig:VT0} for an example of such a basis function, which looks different from the standard pyramidal basis functions on conforming meshes). 
\begin{figure}[t!]
\begin{center}
\begin{overpic}[scale=0.3]{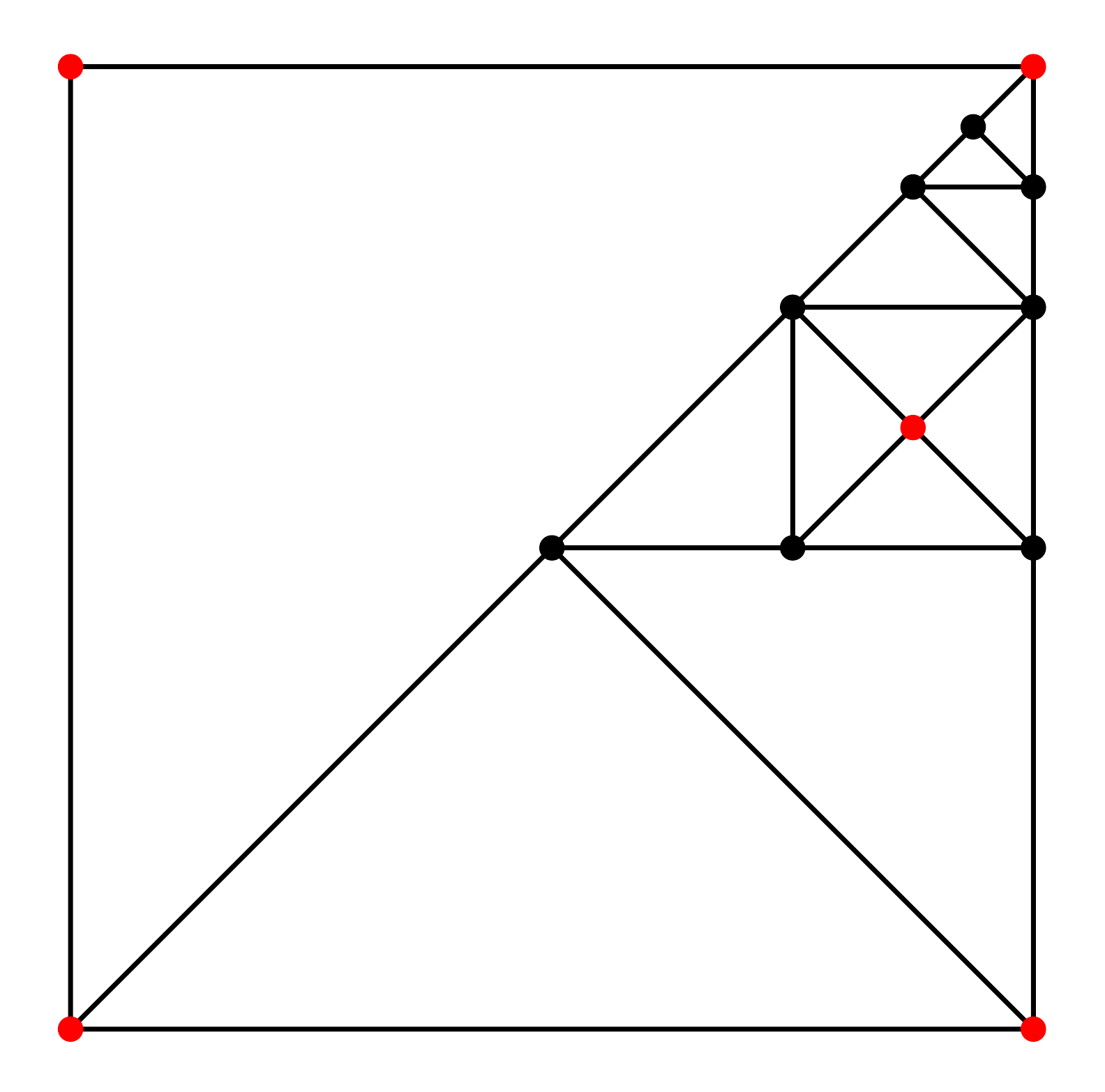}
\put(93, 95){$\bm{x}$}
\put(78, 53){$\bm{x}^*$}
\end{overpic}
\quad
\includegraphics[scale=0.3]{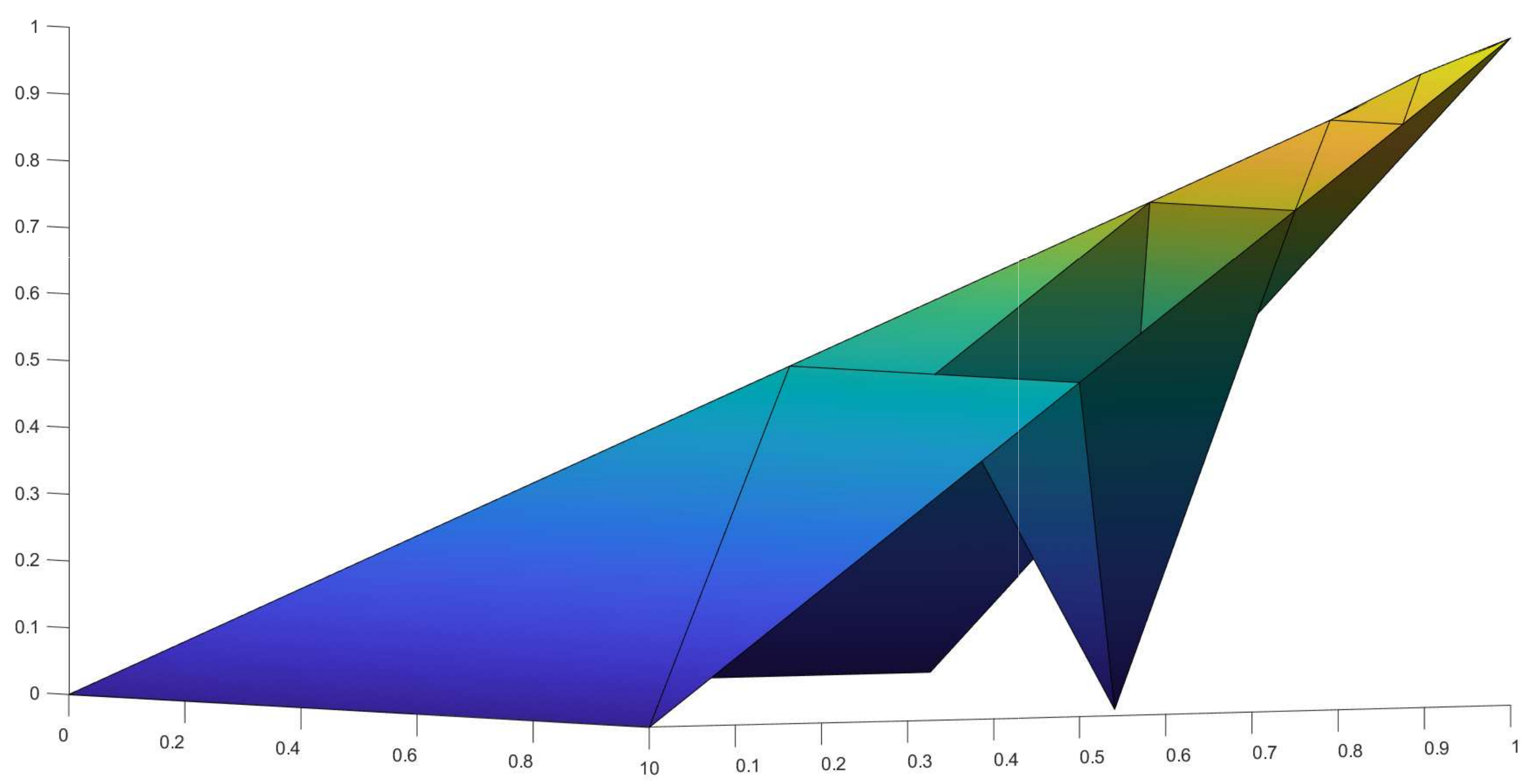}
\end{center}
\caption{{Left: detail of a mesh $\mesh$, in which red nodes $\bm{x}$ and $\bm{x^*}$ are proper nodes. 
      Right: basis function $\psi_{\bm{x}}\in\V_\mesh^0$; notice that $\psi_{\bm{x}}(\bm{x}^*) = 0$ and the basis function
      $\psi_{\bm{x}^*}\in\V_\mesh^0$ is the usual hat function supported in the square centered at $\bm{x}^*$ (not depicted).}}
\label{fig:VT0}
\end{figure}
Thus, it is natural to introduce the operator 
$$
\Imeshz : \Vmesh \to \Vmeshz
$$
defined as the Lagrange interpolation operator at the nodes in ${\cal P}$. 
The following result will be crucial in the forthcoming analysis.
\begin{proposition}[comparison between interpolation operators]\label{prop:compareInterp}
Let $\mesh$ be $\Lambda$-admissible. Then,
there exists a constant $C_I >0$, depending on $\Lambda$ but independent of $\mesh$, such that
\begin{equation}\label{eq:bound-interp}
| v-\Imeshz v|_{1,\Omega}  \, \leq C_I \, | v-\Imesh v|_{1,\mesh} \qquad \forall v \in \Vmesh \,.
\end{equation} 
\end{proposition}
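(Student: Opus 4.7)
The plan is to split $v - \Imeshz v = (v - \Imesh v) + (\Imesh v - \Imeshz v)$ and, exploiting that $v - \Imeshz v$ is continuous (so $|v - \Imeshz v|_{1,\Omega} = |v - \Imeshz v|_{1,\mesh}$), apply the triangle inequality in the broken norm to reduce the task to bounding $|g|_{1,\mesh}$ where $g := \Imesh v - \Imeshz v \in \Wmeshu$. Since $g|_E \in \mathbb{P}_1(E)$ and its values at the three triangle corners of $E$ equal $v(\bm{x}) - \Imeshz v(\bm{x}) =: \epsilon(\bm{x})$ (which vanishes at proper corners), the element-wise scaling $|g|_{1,E}^2 \simeq \sum_{i,j}(\epsilon(\bm{x}_i)-\epsilon(\bm{x}_j))^2$, combined with the observation that each hanging node is a corner of a uniformly bounded number of elements (by $\Lambda$-admissibility), yields $|g|_{1,\mesh}^2 \lesssim_\Lambda \sum_{\bm{x}\in\mathcal{H}}\epsilon(\bm{x})^2$. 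Thus the core task reduces to $\sum_{\bm{x}\in\mathcal{H}}\epsilon(\bm{x})^2 \lesssim_\Lambda |v - \Imesh v|_{1,\mesh}^2$.

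The first technical ingredient I would establish is a \emph{local pointwise bound}: for $\phi \in \VE$ vanishing at the three triangle corners of $E$ and any node $\bm{x}\in\nodesE$, $|\phi(\bm{x})|^2 \leq C_\Lambda |\phi|_{1,E}^2$. This follows from combining (i) the local Poincaré inequality $\|\phi\|_{0,E} \lesssim h_E |\phi|_{1,E}$ (valid because $\phi$ vanishes at corners); (ii) the standard trace inequality $\|\phi\|_{L^2(\partial E)}^2 \lesssim h_E^{-1}\|\phi\|_{0,E}^2 + h_E |\phi|_{1,E}^2$; and (iii) an inverse inequality $\|\phi\|_{L^\infty(\partial E)}^2 \lesssim_\Lambda h_E^{-1}\|\phi\|_{L^2(\partial E)}^2$, which is legitimate because $\phi|_{\partial E}$ is piecewise affine on a number of pieces bounded by $3\cdot 2^\Lambda$ and each piece has length comparable to $h_E$ (Remark~\ref{rem:bound-global-index}). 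Applied to $\phi = (v - \Imesh v)|_E$, which vanishes at the triangle corners by the definition of $\Imesh$, this yields $|(v - \Imesh v)(\bm{x})|^2 \lesssim_\Lambda |v - \Imesh v|_{1,E}^2$ at every boundary node $\bm{x}$ of $E$.

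The next step is a recursive bound for $\epsilon(\bm{x})$ driven by the global-index structure. For each hanging $\bm{x}$ I would select an element $E^*(\bm{x})$ containing $\bm{x}$ in the interior of one of its sides $[\bm{y},\bm{z}]$ between two triangle corners of $E^*$; writing $\bm{x} = (1-\alpha)\bm{y} + \alpha\bm{z}$ and using the continuity of $\Imeshz v$ on $[\bm{y},\bm{z}]$ along with the affine form of $\Imesh v|_{E^*}$ produces the identity $\epsilon(\bm{x}) = (v - \Imesh v)|_{E^*}(\bm{x}) + (1-\alpha)\epsilon(\bm{y}) + \alpha\epsilon(\bm{z})$. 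A careful use of the NVB history of $\bm{x}$ together with the bound $\lambda(\bm{x}) \le \Lambda$ allows $E^*$ to be chosen so that this identity unfolds into a tree of depth at most $\Lambda$ whose leaves are proper nodes, where $\epsilon$ vanishes. Applying the pointwise bound of the previous paragraph at every internal node and using Jensen/Cauchy–Schwarz then gives $\epsilon(\bm{x})^2 \lesssim_\Lambda \sum_{E\in\mathcal{C}(\bm{x})}|v - \Imesh v|_{1,E}^2$ for a chain $\mathcal{C}(\bm{x})$ of uniformly bounded cardinality. Summing over $\bm{x}\in\mathcal{H}$ and noting, again via $\Lambda$-admissibility, that each element $E$ appears in the chains of only uniformly boundedly many hanging nodes, closes the proof.

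I expect the most delicate step to be the construction of $E^*(\bm{x})$ and the verification that the recursion terminates in uniformly boundedly many steps—this is where the combinatorial structure of newest-vertex bisection under the $\Lambda$-admissibility constraint is genuinely needed. The pointwise bound is also non-trivial, since general $H^1$ functions in 2D do not admit pointwise control; the argument works only because of the rigid piecewise-affine structure of $\phi|_{\partial E}$ and the bound on the number of pieces provided by~$\Lambda$.
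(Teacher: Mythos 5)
Your argument is correct in spirit but follows a genuinely different path than the paper's. After the common triangle-inequality reduction to bounding $|\Imesh v - \Imeshz v|_{1,\mesh}$ and the identification $|\Imesh v-\Imeshz v|_{1,\mesh}^2 \simeq \sum_{\bm{x}\in\mathcal H}\epsilon(\bm{x})^2$, the paper introduces \emph{hierarchical details} $d(v,\bm{x})=v(\bm{x})-\tfrac12\big(v(\bm{x}')+v(\bm{x}'')\big)$ built from the bisection parents ${\cal B}(\bm{x})=\{\bm{x}',\bm{x}''\}$, proves the norm equivalence $|v-\Imesh v|_{1,\mesh}^2\simeq\sum_{\bm{x}\in\mathcal H}d^2(v,\bm{x})$ by a finite-dimensional argument based on \eqref{eq:stab-sE}, and then closes with a purely algebraic estimate: the identity $\delta(\bm{x})=d(\bm{x})+\tfrac12\delta(\bm{x}')+\tfrac12\delta(\bm{x}'')$ is written as $\bm{\delta}=\bm{W}\bm{d}$, the matrix $\bm{W}$ is factorized over global-index levels, and $\|\bm W\|_2\le 7^{(\Lambda-1)/2}$ is read off from H\"older's inequality on each factor. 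You instead establish a local pointwise bound for $\VE$ functions vanishing at corners and run a geometric recursion that traces a hanging $\bm{x}$ to the two corners $\bm{y},\bm{z}$ of the side of $E^*(\bm{x})$ containing it, with coefficients $(1-\alpha),\alpha$ rather than the paper's $\tfrac12,\tfrac12$; termination in at most $\Lambda$ steps holds because $\lambda(\bm{y}),\lambda(\bm{z})<\lambda(\bm{x})$ (any hanging node on the interior of the side $[\bm{y},\bm{z}]$ is a ${\cal B}$-descendant of $\bm{y}$ and $\bm{z}$), and a double-counting argument closes. Your route is essentially the same scheme the paper uses to prove the scaled Poincar\'e inequality (Proposition \ref{prop:scaledPoincare}), so it has the merit of unifying the two propositions at the price of a heavier combinatorial count (of the type in Step 6 of Section \ref{sec:poincare}), whereas the paper's matrix route is more compact and yields a cleaner explicit constant.

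One item needs tightening: in your step (i) you assert the local Poincar\'e inequality $\|\phi\|_{0,E}\lesssim h_E|\phi|_{1,E}$ ``because $\phi$ vanishes at corners.'' That implication is false for general $H^1$ functions in two dimensions, since point values of $H^1$ functions carry no information. It does hold for $\phi\in\VE$ under $\Lambda$-admissibility, but it must be \emph{derived}, not assumed, and the order in your chain (i)$\to$(ii)$\to$(iii) is backwards. A clean fix: apply the scaled trace estimate (your (ii)) and the $\Lambda$-dependent inverse inequality (your (iii)) to $\phi-\bar\phi$, with $\bar\phi$ the mean of $\phi$ over $E$, to obtain $\|\phi-\bar\phi\|_{L^\infty(\partial E)}\lesssim_\Lambda|\phi|_{1,E}$; then use vanishing at a corner $\bm{x}_0$ to get $|\bar\phi|=|\phi(\bm{x}_0)-\bar\phi|\le\|\phi-\bar\phi\|_{L^\infty(\partial E)}\lesssim_\Lambda|\phi|_{1,E}$, from which (i) and the pointwise bound both follow. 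Alternatively, invoke the finite dimensionality of $\VE$ together with Remark \ref{rem:bound-global-index}, which is how the paper obtains its local hierarchical-detail equivalence via \eqref{eq:stab-sE}. You do acknowledge the delicacy of the pointwise bound in your closing remarks, but your actual chain as written would not compile into a proof without this repair.
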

Note that the result is non-trivial, since the `non-conforming' $\Imesh$ may operate at a finer scale than the `conforming' $\Imeshz$, although not too fine due to the condition of $\Lambda$-admissibility. The proof of the result is postponed to Sect. \ref{sec:bound-Interp}.

We will also need some Cl\'ement quasi-interpolation operators. Precisely, let us denote by $\widetilde{\mathcal I}_\mesh^0: \mathbb{V} \to \Vmeshz$ the classical  Cl\'ement  operator on the finite-element space $\Vmeshz$; that is, the value at each internal (proper) node is the average of the target function on the support of the associated basis function.
Similarly, let  $\widetilde{\mathcal I}_\mesh: \mathbb{V} \to \Vmesh$ be the Cl\'ement  operator on the virtual-element space $\Vmesh$, as defined in \cite{cileni}.

\begin{lemma}[Clement interpolation estimate]\label{lm:quasi-inetrp}
The following inequality holds
\begin{equation}
\sum_{E\in\mesh} h_E^{-2} \| v- \widetilde{\mathcal I}_\mesh^0 v\|_{0,E}^2 \lesssim \vert v\vert_{1,\Omega}^2 \qquad \forall v \in \mathbb{V}\,,
\end{equation}
where the hidden constant depends on the maximal index $\Lambda$ but not on $\mesh$.
\end{lemma}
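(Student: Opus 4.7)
The plan is to follow the classical Cl\'ement--Scott--Zhang argument, adapted to the special support structure of the basis functions of $\V_\mesh^0$. The potential difficulty comes from the fact that $\psi_{\bm{x}}$, attached to a proper node $\bm{x}\in\mathcal{P}$, is generally not supported only on the element star of $\bm{x}$: because $v(\bm{y})=\tfrac12(v(\bm{x}')+v(\bm{x}''))$ whenever $\bm{y}$ is a hanging node with $\mathcal{B}(\bm{y})=\{\bm{x}',\bm{x}''\}$, the value $\psi_{\bm{x}}(\bm{y})$ is nonzero as soon as $\bm{x}$ is a recursive ancestor of $\bm{y}$. Hence $\omega_{\bm{x}}:=\mathrm{supp}(\psi_{\bm{x}})$ may cross several layers of the refinement tree, and the hypothesis of $\Lambda$-admissibility is exactly what keeps this phenomenon under control.

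Concretely, I would proceed in three steps. \emph{Step 1 (geometry of $\omega_{\bm{x}}$).} Using Definition \ref{def:node-index} and $\Lambda_\mesh\le\Lambda$, every hanging node on which $\psi_{\bm{x}}$ is nonzero is linked to $\bm{x}$ by an ancestor chain of length at most $\Lambda$; combined with Remark \ref{rem:bound-global-index}(ii) ($h_e\simeq h_E$ for every edge of $E$), this gives, with constants depending only on $\Lambda$ and $\mesh_0$: (a) $\omega_{\bm{x}}$ consists of a uniformly bounded number of triangles, all with diameter equivalent to $h_{\omega_{\bm{x}}}:=\mathrm{diam}(\omega_{\bm{x}})$; (b) $h_E\simeq h_{\omega_{\bm{x}}}$ whenever $E\subset\omega_{\bm{x}}$; (c) the finite-overlap number $N_\Lambda:=\max_{E\in\mesh}\#\{\bm{x}\in\mathcal{P}:E\subset\omega_{\bm{x}}\}$ is uniformly bounded. \emph{Step 2 (local Poincar\'e).} Since $\omega_{\bm{x}}$ is a union of boundedly many shape-regular triangles of comparable size, a Poincar\'e--Friedrichs inequality $\|v-\bar v_{\bm{x}}\|_{0,\omega_{\bm{x}}}\lesssim h_{\omega_{\bm{x}}}|v|_{1,\omega_{\bm{x}}}$ holds with constant depending only on $\Lambda$; here $\bar v_{\bm{x}}=|\omega_{\bm{x}}|^{-1}\int_{\omega_{\bm{x}}}v$ for an interior node and $\bar v_{\bm{x}}=0$ for a node on $\partial\Omega$ (using $v\in H^1_0(\Omega)$). \emph{Step 3 (assembly).} Noting that $\sum_{\bm{x}\in\mathcal{P}}\psi_{\bm{x}}\equiv 1$ on $\Omega$ (the constant function $1$ belongs to $\V_\mesh^0$ and takes value $1$ at every proper node, and the averaging rule at hanging nodes is consistent), I would write
\[
v - \widetilde{\mathcal I}_\mesh^0 v \;=\; \sum_{\bm{x}\in\mathcal{P}}(v-\bar v_{\bm{x}})\,\psi_{\bm{x}} \qquad \text{on } \Omega,
\]
and use $\|\psi_{\bm{x}}\|_{L^\infty(\Omega)}\le 1$ together with Step 2 and the finite-overlap property of Step 1 to conclude
\[
\sum_{E\in\mesh}h_E^{-2}\|v-\widetilde{\mathcal I}_\mesh^0 v\|_{0,E}^2 \;\lesssim\; \sum_{\bm{x}\in\mathcal{P}}|v|_{1,\omega_{\bm{x}}}^2 \;\lesssim\; |v|_{1,\Omega}^2,
\]
which is the stated bound.

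The main obstacle is Step 1. Without $\Lambda$-admissibility, the recursive definition of hanging nodes could create arbitrarily long ancestor chains and patches $\omega_{\bm{x}}$ with diameter far exceeding the local mesh size at $\bm{x}$, destroying facts (a)--(c) and forcing the Poincar\'e constant to blow up. The assumption $\Lambda_\mesh\le\Lambda$ caps these chains at length $\Lambda$, and Remark \ref{rem:bound-global-index}(ii) then forces all elements inside a patch to share a common scale; these two ingredients together reduce the analysis to a $\Lambda$-dependent version of the standard conforming Cl\'ement argument.
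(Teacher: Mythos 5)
Your proposal takes a genuinely different route from the paper. You adapt the classical Cl\'ement argument directly to the nonstandard space $\Vmeshz$: analyze the (spread-out) supports $\omega_{\bm{x}}$ of the basis functions $\psi_{\bm{x}}$, prove a local Poincar\'e inequality on each patch, and assemble via the partition of unity $\sum_{\bm{x}\in\mathcal{P}}\psi_{\bm{x}}\equiv 1$. The paper instead makes a three-term reduction: it writes $v-\widetilde{\mathcal I}_\mesh^0 v = (v-\widetilde{\mathcal I}_\mesh v) + (\widetilde{\mathcal I}_\mesh v - \widetilde{\mathcal I}_\mesh^0\widetilde{\mathcal I}_\mesh v) + \widetilde{\mathcal I}_\mesh^0(\widetilde{\mathcal I}_\mesh v - v)$ (after using $L^2$-stability of $\widetilde{\mathcal I}_\mesh^0$), so that the first term is controlled by the known approximation property of the Cl\'ement operator $\widetilde{\mathcal I}_\mesh$ on $\Vmesh$ from \cite{cileni}, and the middle term is controlled by writing $v_\mesh-\widetilde{\mathcal I}_\mesh^0 v_\mesh = (v_\mesh-\Imeshz v_\mesh)+\widetilde{\mathcal I}_\mesh^0(\Imeshz v_\mesh-v_\mesh)$ and invoking Proposition~\ref{prop:scaledPoincare} applied to $v_\mesh-\Imeshz v_\mesh$ (which vanishes at proper nodes). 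The advantage of the paper's route is that all the hanging-node combinatorics is outsourced to Proposition~\ref{prop:scaledPoincare}, which is proved once and for all in Section~\ref{sec:poincare}; the only new ingredient needed is the $L^2$-stability of $\widetilde{\mathcal I}_\mesh^0$. Your route, by contrast, bypasses both Proposition~\ref{prop:scaledPoincare} and the operator $\widetilde{\mathcal I}_\mesh$ entirely, which is conceptually appealing — but then the whole burden is shifted onto your Step~1.

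And that is where your proposal, as written, has a real gap: properties (a)--(c) of $\omega_{\bm{x}}$ (bounded number of constituent triangles, all of size $\simeq h_{\omega_{\bm{x}}}$; finite overlap), together with the connectedness of $\omega_{\bm{x}}$ (which you need for Step~2 but do not mention), are asserted but not proved. They do not follow in one line from $\Lambda$-admissibility and Remark~\ref{rem:bound-global-index}(ii): one has to control the branching of the $\mathcal{B}$-ancestor relation (how many hanging nodes $\bm{y}$ can have a given $\bm{z}$ in $\mathcal{B}(\bm{y})$, how long the chains are, and how the element scale changes along a chain), which is essentially the same combinatorial work carried out in Steps~2--6 of the proof of Proposition~\ref{prop:scaledPoincare}. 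So your argument would in effect re-derive a variant of that proposition inside Step~1. It is not a wrong approach, but it is not shorter than the paper's, and as presented it is a sketch of the strategy rather than a proof. If you want to pursue it, you should either spell out the ancestor-chain combinatorics for $\omega_{\bm{x}}$ (mirroring Section~\ref{sec:poincare}), or — more economically — notice that the quantity you ultimately need, namely $\sum_{\bm{x}}\|v-\bar v_{\bm{x}}\|^2_{0,\omega_{\bm{x}}}/h_{\omega_{\bm{x}}}^2\lesssim|v|_{1,\Omega}^2$ with bounded overlap, is most easily obtained precisely via the three-term decomposition the paper uses.

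Two smaller points. First, your partition-of-unity identity $v-\widetilde{\mathcal I}_\mesh^0 v=\sum_{\bm{x}\in\mathcal{P}}(v-\bar v_{\bm{x}})\psi_{\bm{x}}$ is correct, but only after one extends the basis to all proper nodes (including those on $\partial\Omega$) in the unconstrained space $\Wmeshu\cap C^0(\overline\Omega)$ and adopts the convention $\bar v_{\bm{x}}=0$ there, turning the local Poincar\'e into a Friedrichs inequality using $v\in H^1_0(\Omega)$; you hint at this, but it should be stated explicitly since $\Vmeshz\subset H^1_0(\Omega)$ does not contain the constant~$1$. Second, you correctly identify that the lemma is really about the interplay between the non-local structure of $\psi_{\bm{x}}$ and $\Lambda$-admissibility — that diagnosis matches the paper's emphasis — but the paper's way of dealing with it avoids ever having to describe $\omega_{\bm{x}}$ explicitly.
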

\begin{proof}
Let $v_\mesh=\widetilde{\mathcal I}_\mesh v \in \Vmesh$. Since 
$ v- \widetilde{\mathcal I}_\mesh^0 v=(v-v_\mesh)+(v_\mesh-\widetilde{\mathcal I}_\mesh^0 v_\mesh) + (\widetilde{\mathcal I}_\mesh^0 v_\mesh - \widetilde{\mathcal I}_\mesh^0 v )$
and $\widetilde{\mathcal I}_\mesh^0$ is locally stable in $L^2$, we deduce
\begin{eqnarray}
\sum_{E\in\mesh} \!\! h_E^{-2} \| v- \widetilde{\mathcal I}_\mesh^0 v\|_{0,E}^2
&\lesssim& 
\sum_{E\in\mesh} \!\! h_E^{-2} \| v- v_\mesh\|_{0,E}^2 +  h_E^{-2} \| v_\mesh - \widetilde{\mathcal I}_\mesh^0 v_\mesh\|_{0,E}^2 
\lesssim \vert v\vert_{1,\Omega}^2 + \! \sum_{E\in\mesh}  \!\! h_E^{-2} \| v_\mesh - \widetilde{\mathcal I}_\mesh^0 v_\mesh\|_{0,E}^2.\nonumber
\end{eqnarray}
Thus, we need to prove  
$$  \sum_{E\in\mesh}  h_E^{-2} \| v_\mesh - \widetilde{\mathcal I}_\mesh^0 v_\mesh\|_{0,E}^2 \lesssim \vert v_\mesh\vert_{1,\Omega}^2.$$
To show this bound, write
$ v_\mesh - \widetilde{\mathcal I}_\mesh^0 v_\mesh = 
(v_\mesh - {\mathcal I}_\mesh^0 v_\mesh) + \widetilde{\mathcal I}_\mesh^0 
({\mathcal I}_\mesh^0 v_\mesh - v_\mesh)$
because $\widetilde{\mathcal I}_\mesh^0$ is invariant in $\Vmeshz$, i.e. ${\mathcal I}_\mesh^0 v_\mesh = \widetilde{\mathcal I}_\mesh^0({\mathcal I}_\mesh^0 v_\mesh)$.
We finally use again the stability of $\widetilde {\mathcal I}_\mesh^0$ in $L^2$ together with Proposition \ref{prop:scaledPoincare}  to obtain 
$$
\sum_{E\in\mesh}  h_E^{-2} \| v_\mesh - \widetilde{\mathcal I}_\mesh^0 v_\mesh\|_{0,E}^2 \lesssim
\sum_{E\in\mesh}  h_E^{-2} \| v_\mesh - {\mathcal I}_\mesh^0 v_\mesh\|_{0,E}^2 \lesssim C_\Lambda \vert v_\mesh\vert_{1,\Omega}^2.
$$
This concludes the proof.
\end{proof}

\section{A posteriori error analysis}\label{sec:aposteriori}

Since we are interested in building adaptive discretizations, we rely on a posteriori error control. Following \cite{Cangiani-etal:2017}, we first introduce a residual-type a posteriori estimator. To this end, recalling that $\data = (A,c,f)$ denotes the set of piecewise constant data, for any $v \in \Vmesh$ and any element $E$ let us define the internal residual over $E$  
\begin{equation}\label{eq:estim1}
\rmesh(E;v,\data) :=   f_E \,    - \, c_E \, \PiE v\,.
\end{equation}  
Similarly, for any two elements $E_1, E_2 \in \mesh$ sharing an edge $e \in \edge_{E_1}\cap \edge_{E_2}$, let us define the jump residual over $e$
\begin{equation}\label{eq:estim2}
\jmesh(e;v,\data) := \jump{A \nabla \, \Pi^\nabla_{\mesh} v}_e = (A_{E_1} \nabla \, \Pi^\nabla_{E_1} v_{|E_1}) \cdot \boldsymbol{n}_1  +  (A_{E_2} \nabla \, \Pi^\nabla_{E_2} v_{|E_2}) \cdot \boldsymbol{n}_2 \,, 
\end{equation}  
where $\boldsymbol{n}_i$ denotes the unit normal vector to $e$ pointing outward with respect to $E_i$; set $\jmesh(e;v) =0$ if $e \subset \partial\Omega$. Then, taking into account Remark \ref{rem:bound-global-index}, we define the local residual estimator associated with $E$
\begin{equation}\label{eq:estim3}
\etamesh^2(E;v,\data) := h_E^2 \Vert \rmesh(E;v,\data) \Vert_{0,E}^2 \ + \ \tfrac12 \sum_{e \in \edgeE} h_E \Vert \jmesh(e;v,\data) \Vert_{0,e}^2 \;,
\end{equation}
as well as the global residual estimator
\begin{equation}\label{eq:estim4}
\etamesh^2(v,\data) := \sum_{E \in \mesh} \etamesh^2(E;v,\data) \,.
\end{equation}

An upper bound of the energy error is provided by the following result.
The proof follows \cite[Theorem 13]{Cangiani-etal:2017}, with the remarkable technical
difference that the stabilization term $\Smesh(\umesh, \umesh)$ 
is not scaled by the constant $\gamma$ in \eqref{eq:apost1}.

\begin{proposition}[upper bound] \label{prop:aposteriori} 
There exists a constant $C_\text{apost} >0$  depending on $\Lambda$ and $\data$ but independent of $u$, $\mesh$, $\umesh$ and $\gamma$, such that
\begin{equation}\label{eq:apost1}
\vert u - \umesh \vert_{1, \Omega}^2  \leq C_\text{apost} \left( \etamesh^2(\umesh,\data) 
+ \Smesh(\umesh, \umesh) \right) \,.
\end{equation} 
\end{proposition}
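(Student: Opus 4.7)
\medskip
\noindent\textbf{Proof plan for Proposition \ref{prop:aposteriori}.}

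The strategy is the standard residual-based upper-bound argument, but enhanced with two ingredients that force the stabilization to appear in \eqref{eq:apost1} without any $\gamma$ factor. First, the Galerkin quasi-orthogonality of Lemma \ref{L:Galerkin-orthogonality} annihilates the $\gamma \Smesh(\umesh,\cdot)$ contribution whenever the test function lies in $\Vmeshz$. Second, the Cl\'ement estimate of Lemma \ref{lm:quasi-inetrp} (built on Proposition \ref{prop:scaledPoincare}) delivers the right weighted bound on $v - \widetilde{\mathcal I}_\mesh^0 v$ on the non-conforming mesh, with a constant depending on $\Lambda$ but not on $\gamma$. The key point is that we test the continuous error against an interpolant valued in the \emph{small} space $\Vmeshz$ rather than in $\Vmesh$ itself.

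By \eqref{norm:equiv} it suffices to bound $\cB(u-\umesh, v)$ for $v := u - \umesh \in \V$. Setting $v_0 := \widetilde{\mathcal I}_\mesh^0 v \in \Vmeshz$, I split
\begin{equation*}
\cB(u-\umesh, v) = \cB(u-\umesh, v - v_0) + \cB(u-\umesh, v_0),
\end{equation*}
and rewrite the second addend via Lemma \ref{L:Galerkin-orthogonality} as $\sum_E c_E (\PiE\umesh - \umesh, v_0)_E$. For the first addend, I substitute $(f,v-v_0)_\Omega = \cB(u, v-v_0)$, expand $\cB(\umesh, v-v_0)$ element-wise, and insert $\pm\PiE\umesh$ in both the gradient and zeroth-order terms. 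Integration by parts on the $\PiE\umesh$-piece reduces to interface jumps, since $\nabla\cdot(A_E \nabla\PiE\umesh) = 0$ thanks to the constancy of $A_E$ and the linearity of $\PiE\umesh$. This delivers the representation
\begin{equation*}
\cB(u-\umesh, v-v_0) = \sum_{E \in \mesh} (\rmesh(E;\umesh,\data), v-v_0)_E - \sum_{e\in\edge} \int_e \jmesh(e;\umesh,\data) \, (v-v_0) - R,
\end{equation*}
where $R$ collects the two defect contributions involving $\umesh-\PiE\umesh$.

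The residual and jump pieces are bounded by $\etamesh(\umesh,\data)\,|v|_{1,\Omega}$ via Cauchy--Schwarz, Lemma \ref{lm:quasi-inetrp}, and a trace inequality combined with $h_e \simeq h_E$ from Remark \ref{rem:bound-global-index}. For $R$ and the quasi-orthogonality remainder I invoke the minimality identity $|\umesh - \PiE\umesh|_{1,E} \leq |\umesh - \IE\umesh|_{1,E}$ together with \eqref{eq:stab-sE} (both yielding a constant $\gamma$-independent), so that $|\umesh-\PiE\umesh|_{1,E} \lesssim \SE(\umesh,\umesh)^{1/2}$; the $L^2$ factor uses a Poincar\'e inequality on the element, valid because $\int_{\partial E}(\umesh-\PiE\umesh)=0$ by \eqref{eq:def-PinablaE}. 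This controls both terms by $\Smesh(\umesh,\umesh)^{1/2}\,|v|_{1,\Omega}$. Assembling and applying Young's inequality to absorb $|u-\umesh|_{1,\Omega}$ on the right produces \eqref{eq:apost1}, with $C_\text{apost}$ depending on $\Lambda$, on $c_\cB$ from \eqref{norm:equiv}, and on $\|A\|_\infty$, $\|c\|_\infty$, $|\Omega|$.

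The main obstacle is keeping the coefficient of $\Smesh$ uniform in $\gamma$: any attempt to use the discrete coercivity of $\Bmesh$ or to test with elements of $\Vmesh \setminus \Vmeshz$ reintroduces $\gamma$. This is precisely why Lemma \ref{L:Galerkin-orthogonality} must be deployed in the decomposition above, and why the interpolation estimate of Lemma \ref{lm:quasi-inetrp}, with its $\Lambda$-dependent (but $\gamma$-independent) constant, replaces the generic Cl\'ement estimate one would naturally use on the non-conforming mesh.
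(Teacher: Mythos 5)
Your proposal is correct and follows essentially the same route as the paper's own proof: you split $\cB(u-\umesh,v)$ with $v=u-\umesh$ against the Cl\'ement interpolant $v_0=\widetilde{\mathcal I}_\mesh^0 v\in\Vmeshz$, use Lemma \ref{L:Galerkin-orthogonality} to make the $v_0$-part a $\gamma$-free term controlled by $\Smesh$, and treat $\cB(u-\umesh,v-v_0)$ by the residual/jump/defect splitting, with Lemma \ref{lm:quasi-inetrp} supplying the $\Lambda$-dependent (but $\gamma$-independent) weighted interpolation bound. This is precisely the decomposition and the two key ingredients used in the paper; the only cosmetic differences are the notation ($I,II$ versus your explicit bilinear splitting) and that the paper divides through by $|u-\umesh|_{1,\Omega}$ instead of invoking Young's inequality.
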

\begin{proof}
We let $v\in H^1_0(\Omega)$ and proceed as in \cite[Theorem 13]{Cangiani-etal:2017} to write
\[
  \cB(u-\umesh,v) = 
  \big((f,v-v_\mesh)_\Omega-\cB(u_\mesh,v-v_\mesh) \big) + \cB(u-\umesh,v_\mesh) =: I + II,
\]  
except that we choose $v_\mesh=\widetilde{\mathcal I}_\mesh^0 v \in \Vmeshz$, where 
$\widetilde{\mathcal I}_\mesh^0$ is  the Cl\' ement quasi-interpolation operator on $\Vmeshz$.
This choice has a remarkable impact on \eqref{eq:apost1} compared with \cite[Theorem 13]{Cangiani-etal:2017}.

We start by estimating the term $I$: one has
\begin{align*}
I & = \sum_{E\in\mesh} \left\{(f,v-v_\mesh)_E
-(A_E\nabla \Pi^{\nabla}_E u_\mesh,\nabla(v-v_\mesh))_E- c_E(\Pi^{\nabla}_E u_\mesh,v-v_\mesh)_E \right\} \\
& +  \sum_{E\in\mesh} \left\{(A_E\nabla (\Pi^{\nabla}_E u_\mesh -u_\mesh),\nabla(v-v_\mesh))_E+ c_E((\Pi^{\nabla}_E u_\mesh -u_\mesh),v-v_\mesh)_E \right\} =: I_1+ I_2.
\end{align*}
Integrating by parts, employing Lemma \ref{lm:quasi-inetrp} and proceeding as in \cite{Cangiani-etal:2017}, we get
\begin{align}
\vert I_1\vert &\lesssim \eta_\mesh(u_\mesh,\data) \vert v \vert_{1,\Omega} \label{I_1}\\
\vert I_2\vert &\lesssim \left(
\sum_{E\in\mesh} \| \nabla(\Pi^{\nabla}_E u_\mesh-u_\mesh)\|^2_{0,E} + h_E\| \Pi^{\nabla}_E u_\mesh-u_\mesh\|^2_{0,E} 
\right)^{1/2}  \vert v \vert_{1,\Omega} \lesssim \Smesh (u_\mesh,u_\mesh)^{1/2}\vert v \vert_{1,\Omega}\label{I_3}\,.
\end{align}

We now deal with term $II$. We first apply Lemma \ref{L:Galerkin-orthogonality} to obtain
\[
\cB(u-\umesh, v_\mesh) = \sum_{E\in \mesh} c_E \int_E (\Pi^\nabla_E u_\mesh - u_\mesh) v_\mesh,
\]
because $v_\mesh\in\V_\mesh^0$; this is the key difference with \cite[Theorem 13]{Cangiani-etal:2017}.
We next recall the definition \eqref{eq:def-PinablaE} of $\Pi^\nabla_E$, and corresponding scaled Poincar\'e inequality
$\|\Pi^\nabla_E u_\mesh - u_\mesh\|_{0,E} \lesssim h_E \, |\Pi^\nabla_E u_\mesh - u_\mesh|_{1,E}$ for all $E\in\mesh$, to
arrive at
\[
\cB(u-\umesh, v_\mesh) \lesssim   h_E\, \Smesh(u_\mesh,u_\mesh)^{1/2} \vert v_\mesh\vert_{1,\Omega}.
\]

Finally, taking $v=u-\umesh \in H^1_0(\Omega)$, employing the coercivity of $\cB(\cdot,\cdot)$ and combining
the above estimates yield the assertion.
\end{proof}

We state the following result, which is proven in \cite{Cangiani-etal:2017} for the choice \eqref{vem:choice:2} but it holds with the same proof for any other admissible choice of $\VE$.

\begin{proposition}[local lower bound]\label{local-lower-bound}
There holds
\begin{equation}
\eta_\mesh^2(E;u_\mesh,\data) \lesssim \sum_{E'\in \omega_E } \left( 
| u-u_\mesh|^2_{1,E'} + S_{E'}(u_\mesh,u_\mesh)
\right)
\end{equation}
where $\omega_E:=\{E^\prime: \vert \partial E \cap \partial E^\prime\vert \not=0 \}$. The hidden constant is independent of $\gamma, h,u$ and $u_\mesh$.
\end{proposition}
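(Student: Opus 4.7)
The plan is to recover the local lower bound with the classical Verf\"urth bubble-function technique, adapted to the VEM setting: one tests the local residual identities against element bubbles $b_E$ (supported in $E$, cubic on the triangle) and edge bubbles $b_e$ (supported in the patch $\omega_e=E_1\cup E_2$, quadratic), and uses the strong-weak consistency of the PDE together with the stabilization-controlled gap $u_\mesh-\Pi^\nabla_E u_\mesh$.

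\emph{Element residual.} Since $\rmesh(E;\umesh,\data)=f_E-c_E\PiE\umesh\in\mathbb{P}_1(E)$ by Assumption~\ref{ass:constant-coeff}, set $w_E := b_E\, \rmesh(E;\umesh,\data)$ extended by zero to $\Omega$. The scaled bubble inverse estimates give $\|\rmesh\|_{0,E}^2\lesssim \int_E \rmesh\,w_E$ and $|w_E|_{1,E}\lesssim h_E^{-1}\|\rmesh\|_{0,E}$. Using the variational identity $\cB(u,w_E)=(f,w_E)_E$ (valid because $w_E\in H^1_0(E)\subset\V$) and the piecewise-constant character of $A$ and $c$, I would write
\begin{equation*}
\int_E \rmesh \, w_E
= \int_E A_E\nabla(u-\PiE\umesh)\cdot\nabla w_E \;+\; \int_E c_E\bigl(u-\PiE\umesh\bigr)w_E\,.
\end{equation*}
Splitting $u-\PiE\umesh = (u-\umesh)+(\umesh-\PiE\umesh)$, applying Cauchy--Schwarz, the above inverse estimate, and the standard Poincar\'e bound $\|\umesh-\PiE\umesh\|_{0,E}\lesssim h_E\,|\umesh-\PiE\umesh|_{1,E}$, followed by the stability $|\umesh-\PiE\umesh|_{1,E}^2\lesssim \SE(\umesh,\umesh)$ (a consequence of \eqref{eq:stab-sE} together with $\PiE$ being the $H^1$-best approximant in $\mathbb{P}_1(E)$), I would conclude
\begin{equation*}
h_E^2\|\rmesh(E;\umesh,\data)\|_{0,E}^2 \,\lesssim\, |u-\umesh|_{1,E}^2 + \SE(\umesh,\umesh)\,.
\end{equation*}

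\emph{Jump residual.} For an interior edge $e\in\edgeE\cap\edge_{E'}$ with $\omega_e=E\cup E'$, extend $\jmesh(e;\umesh,\data)$ as a constant along the normal to $e$ and multiply by the edge bubble $b_e$, producing $w_e$ supported in $\omega_e$. The edge inverse estimates yield $\|\jmesh\|_{0,e}^2\lesssim \int_e \jmesh\,w_e$ together with $\|w_e\|_{0,E''}\lesssim h_e^{1/2}\|\jmesh\|_{0,e}$ and $|w_e|_{1,E''}\lesssim h_e^{-1/2}\|\jmesh\|_{0,e}$ for $E''=E,E'$. Integrating by parts on each of $E,E'$ and using the PDE on $w_e\in H^1_0(\omega_e)$ gives
\begin{equation*}
\int_e \jmesh \, w_e
= \sum_{E''\in\{E,E'\}} \!\int_{E''}\! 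A_{E''}\nabla(u-\Pi^\nabla_{E''}\umesh)\cdot\nabla w_e \;+\; \int_{E''} \!\bigl(c_{E''}(u-\Pi^\nabla_{E''}\umesh)-\rmesh(E'';\umesh,\data)\bigr) w_e.
\end{equation*}
Arguing as before, using the already-established element bound in $E,E'$ and $h_e\simeq h_E\simeq h_{E'}$ (Remark~\ref{rem:bound-global-index}, which uses $\Lambda$-admissibility), produces
\begin{equation*}
h_E\|\jmesh(e;\umesh,\data)\|_{0,e}^2 \,\lesssim\, \sum_{E''\in\omega_e}\bigl(|u-\umesh|_{1,E''}^2 + S_{E''}(\umesh,\umesh)\bigr)\,.
\end{equation*}
Summing the element and edge contributions over the (bounded number of) edges of $E$ yields the claim.

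\emph{Main obstacle.} Unlike the FEM case, $\umesh$ is not a polynomial on $E$, so the inverse estimates cannot be invoked directly on $\umesh$; the key device is to work throughout with the polynomial $\PiE\umesh$ and absorb the ``virtual'' part through the equivalence $|\umesh-\PiE\umesh|_{1,E}^2\lesssim \SE(\umesh,\umesh)$ provided by \eqref{eq:stab-sE}. The role of $\Lambda$-admissibility is confined to ensuring $h_e\simeq h_E$ on every edge and bounding the cardinality of $\omega_E$, so that the local scaling of bubble estimates behaves uniformly. These are precisely the hypotheses that make the argument independent of the specific choice of $\VE$ satisfying \eqref{eq:cond-VE} and \eqref{eq:stab-sE}.
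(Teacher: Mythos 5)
Your proposal is correct and follows exactly the route the paper intends: the paper does not prove this proposition itself but defers to the Verf\"urth-type bubble-function argument of Cangiani et al.\ (2017), remarking only that it ``holds with the same proof for any other admissible choice of $\VE$'', and your sketch reproduces precisely that argument, including the crucial VEM-specific device of testing against $\PiE\umesh$ rather than $\umesh$ and absorbing the gap via $|\umesh-\PiE\umesh|_{1,E}^2\lesssim \SE(\umesh,\umesh)$ (which follows from \eqref{eq:stab-sE} and the $H^1$-optimality of $\PiE$). The minor sign discrepancies in your two displayed identities are immaterial since only their absolute values enter the subsequent Cauchy--Schwarz estimates.
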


\begin{corollary}[global lower bound]\label{global-lower-bound}
There exists a constant $c_\text{apost} >0$, depending on $\Lambda$ but independent of $u$, $\mesh$, $\umesh$ and $\gamma$, such that
\begin{equation}\label{eq:global-lower-bound}
c_\text{apost} \, \eta_\mesh^2(u_\mesh,\data) \leq  \vert u-u_\mesh\vert^2_{1, \Omega} + S_\mesh(u_\mesh,u_\mesh) \,.
\end{equation}
\end{corollary}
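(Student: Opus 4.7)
The strategy is to derive the global lower bound by summing the local lower bound of Proposition \ref{local-lower-bound} over all elements $E \in \mesh$ and then controlling the overlap of the patches $\omega_E$ in terms of $\Lambda$. Indeed, summing the local estimates gives
\[
\sum_{E\in\mesh}\eta_\mesh^2(E;u_\mesh,\data) \;\lesssim\; \sum_{E\in\mesh}\sum_{E'\in\omega_E}\Big(|u-u_\mesh|_{1,E'}^2 + S_{E'}(u_\mesh,u_\mesh)\Big),
\]
and by definition \eqref{eq:estim4} the left-hand side equals $\eta_\mesh^2(u_\mesh,\data)$.

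The first step is therefore to interchange the order of summation and rewrite the right-hand side as
\[
\sum_{E'\in\mesh}\#\{E\in\mesh : E'\in\omega_E\}\,\Big(|u-u_\mesh|_{1,E'}^2 + S_{E'}(u_\mesh,u_\mesh)\Big).
\]
The key step is then to bound the multiplicity $\#\{E\in\mesh : E'\in\omega_E\}$ by a constant $M(\Lambda)$ depending solely on $\Lambda$ and on the shape of $\mesh_0$. Here one uses Remark \ref{rem:bound-global-index}: on a $\Lambda$-admissible mesh every triangle $E'$ has at most $|\mathcal{N}_{E'}|\le 3\cdot 2^\Lambda$ edges, and across each edge the neighboring element is uniquely determined (or lies on $\partial\Omega$); hence the number of triangles $E$ with $|\partial E\cap\partial E'|\ne 0$ is bounded by $M(\Lambda)$.

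Once this combinatorial bound is in hand, the inequality becomes
\[
\eta_\mesh^2(u_\mesh,\data) \;\lesssim\; M(\Lambda)\sum_{E'\in\mesh}\Big(|u-u_\mesh|_{1,E'}^2 + S_{E'}(u_\mesh,u_\mesh)\Big) = M(\Lambda)\Big(|u-u_\mesh|_{1,\Omega}^2 + S_\mesh(u_\mesh,u_\mesh)\Big),
\]
which yields \eqref{eq:global-lower-bound} with $c_\text{apost}>0$ depending on $\Lambda$ only through $M(\Lambda)$ and the shape regularity inherited from $\mesh_0$.

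The main subtlety is the counting argument: for genuinely polygonal $E'$ with many collinear hanging nodes on a single side $L\subset\partial E'$, several distinct neighbors may share that side, so one cannot simply bound the overlap by a fixed small integer as in conforming FEM. The $\Lambda$-admissibility hypothesis \eqref{eq:global-index-intro} is precisely what prevents this number from blowing up, since it caps the number of hanging nodes per side by $2^\Lambda-1$ and thus the number of sub-edges of $E'$ by $3\cdot 2^\Lambda$. Everything else in the proof is a routine rearrangement of the local bound.
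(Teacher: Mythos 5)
Your proof is correct and follows the only natural route, which is also the one the paper implicitly intends (the corollary is stated without proof, as an immediate consequence of Proposition \ref{local-lower-bound}): sum the local lower bound over all $E\in\mesh$, swap the order of summation, and bound the overlap multiplicity $\#\{E : E'\in\omega_E\}$ by a constant depending only on $\Lambda$ and the initial mesh. Your combinatorial justification via Remark \ref{rem:bound-global-index} — that a $\Lambda$-admissible element has at most $3\cdot 2^\Lambda$ edges, hence at most that many edge-neighbors — is exactly the point that needs to be made and is correctly identified as the place where $\Lambda$-admissibility enters.
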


We now state one of the two main results contained in this paper. Due to the technical nature of the proof, we postpone it to Sect. \ref{sec:bound-ST}.

\begin{proposition}[bound of the stabilization term by the residual] \label{prop:bound-ST} There exists a constant $C_B >0$ depending on $\Lambda$ but independent of $\mesh$, $\umesh$ and $\gamma$ such that
\begin{equation}\label{eq:bound-ST}
\gamma^2 \Smesh(\umesh, \umesh) \leq C_B \, \etamesh^2(\umesh, \data) \,.
\end{equation}
\end{proposition}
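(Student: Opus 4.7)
The plan is to test the discrete equation against a tailored element of $\Vmesh$, rewrite the result as a residual via elementwise integration by parts, and then invoke the two preparatory estimates of Section \ref{sec:prep}. Concretely, set $e := \umesh - \Imeshz \umesh \in \Vmesh$. Since $\Imeshz \umesh \in \Vmeshz$, property \eqref{eq:propB1} gives $\Smesh(\umesh,\Imeshz \umesh)=0$; in addition, $e$ vanishes at every proper node by construction of $\Imeshz$. Using $e$ as a test function in \eqref{def-Galerkin} yields the identity
\[
\gamma \Smesh(\umesh,\umesh) \;=\; \gamma \Smesh(\umesh,e) \;=\; {\cal F}_\mesh(e) - \amesh(\umesh,e) - \mmesh(\umesh,e) \;=:\; R(e).
\]

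Next I would convert $R(e)$ into a residual form. Since $A_E \nabla \PiE \umesh$ is constant on $E$ and \eqref{eq:def-PinablaE} forces $\int_E \nabla(e - \PiE e)=0$, one can replace $\nabla \PiE e$ by $\nabla e$ in $\amesh(\umesh,e)$; an elementwise integration by parts, combined with the continuity of $e \in \Vmesh$ across the skeleton and its vanishing on $\partial\Omega$, produces interelement jumps and gives
\[
R(e) \;=\; \sum_{E\in\mesh} \int_E \rmesh(E;\umesh,\data)\,\PiE e \;-\; \sum_{e'\in\edge,\,e' \not\subset \partial\Omega} \int_{e'} \jmesh(e';\umesh,\data)\, e.
\]

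Now apply weighted Cauchy--Schwarz together with the standard bound $\|\PiE e\|_{0,E} \lesssim \|e\|_{0,E} + h_E |e|_{1,E}$ (which follows from the Poincar\'e inequality with boundary-mean constraint) and the trace inequality $\|e\|_{0,e'}^2 \lesssim h_E^{-1}\|e\|_{0,E}^2 + h_E |e|_{1,E}^2$, to obtain
\[
|R(e)| \;\lesssim\; \etamesh(\umesh,\data)\,\Big( \sum_{E\in\mesh} h_E^{-2}\|e\|_{0,E}^2 + |e|_{1,E}^2 \Big)^{\!1/2}.
\]
Because $e$ vanishes at all proper nodes, the scaled Poincar\'e inequality of Proposition \ref{prop:scaledPoincare} gives $\sum_E h_E^{-2}\|e\|_{0,E}^2 \le C_\Lambda |e|_{1,\Omega}^2$. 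Then Proposition \ref{prop:compareInterp} and \eqref{eq:stab-norm} yield
\[
|e|_{1,\Omega} \;=\; |\umesh - \Imeshz \umesh|_{1,\Omega} \;\lesssim\; |\umesh - \Imesh \umesh|_{1,\mesh} \;\lesssim\; \Smesh(\umesh,\umesh)^{1/2}.
\]
Combining everything, $\gamma \Smesh(\umesh,\umesh) \lesssim \etamesh(\umesh,\data)\,\Smesh(\umesh,\umesh)^{1/2}$; dividing by $\Smesh(\umesh,\umesh)^{1/2}$ and squaring yields \eqref{eq:bound-ST} with $C_B$ depending on $\Lambda$ through $C_\Lambda$ and $C_I$.

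\textbf{Where the difficulty really lies.} Once the test function $e = \umesh - \Imeshz \umesh$ is identified, the rest is a residual-type computation in the spirit of Proposition \ref{prop:aposteriori}. The cleverness is that this single choice simultaneously (i) kills $\Smesh(\umesh,\cdot)$ against the $\Vmeshz$-component, producing $\Smesh(\umesh,\umesh)=\Smesh(\umesh,e)$, (ii) vanishes at proper nodes, unlocking the scaled Poincar\'e inequality, and (iii) has its $H^1$-seminorm controlled by $\Smesh(\umesh,\umesh)^{1/2}$ via the interpolation comparison. The genuine analytic substance is therefore already packaged in Propositions \ref{prop:scaledPoincare} and \ref{prop:compareInterp}, both of which hinge delicately on the $\Lambda$-admissibility hypothesis \eqref{eq:global-index-intro}; once they are in hand, the appearance of the factor $\gamma^2$ is simply the consequence of dividing a linear inequality in $\Smesh(\umesh,\umesh)^{1/2}$ through.
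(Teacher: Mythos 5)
Your proposal is correct and follows essentially the same approach as the paper: test the Galerkin equation against the deviation from the conforming interpolant, rewrite the result as a residual via elementwise integration by parts (using that $A_E\nabla\PiE\umesh$ is a constant field so $\nabla\PiE$ can be replaced by $\nabla$), and then invoke the scaled Poincar\'e inequality (Proposition \ref{prop:scaledPoincare}) and the interpolation comparison (Proposition \ref{prop:compareInterp}) to close the estimate. The only cosmetic difference is that the paper keeps $w\in\Vmeshz$ generic and applies Young's inequality before specializing $w=\Imeshz\umesh$, whereas you fix $e=\umesh-\Imeshz\umesh$ from the start and divide by $\Smesh(\umesh,\umesh)^{1/2}$ at the end; these manipulations are equivalent.
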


Combining Proposition \ref{prop:aposteriori}, Corollary \ref{global-lower-bound} and Proposition \ref{prop:bound-ST}, 
we get the following stabilization-free (global) upper and lower bounds.

\begin{corollary}[stabilization-free a posteriori error estimates]\label{Corollary:stab-free}
Assume that the parameter $\gamma$ is chosen to satisfy $\gamma^2 > \displaystyle{\frac{C_B}{c_\text{apost}}}$. Then it holds
\begin{equation}\label{apost:stab-free}
C_L  \etamesh^2(\umesh,\data)    \leq    \vert u - \umesh \vert_{1, \Omega}^2  
\leq C_U   \etamesh^2(\umesh,\data)  \, ,
\end{equation} 
with $C_L=c_\text{apost} - C_B \gamma^{-2}$ and 
$C_U= C_\text{apost} \, \left(1+ C_B \gamma^{-2} \right)$.
\end{corollary}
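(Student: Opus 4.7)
The proof is essentially a direct algebraic combination of the three preceding results, and the parameter condition on $\gamma$ enters only to keep the lower constant $C_L$ strictly positive. I would organize it as two separate chains, one for the upper bound and one for the lower bound, each consisting of a single substitution step.

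For the upper bound, the plan is to start from Proposition \ref{prop:aposteriori}, which gives
$$
\vert u-\umesh\vert_{1,\Omega}^2 \le C_{\text{apost}}\bigl(\etamesh^2(\umesh,\data)+\Smesh(\umesh,\umesh)\bigr),
$$
and then replace the stabilization term using Proposition \ref{prop:bound-ST}, namely $\Smesh(\umesh,\umesh)\le C_B\gamma^{-2}\etamesh^2(\umesh,\data)$. This yields directly
$$
\vert u-\umesh\vert_{1,\Omega}^2 \le C_{\text{apost}}\bigl(1+C_B\gamma^{-2}\bigr)\etamesh^2(\umesh,\data),
$$
which is the claimed upper bound with $C_U=C_{\text{apost}}(1+C_B\gamma^{-2})$. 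No sign issue arises here, so no condition on $\gamma$ is needed for the upper bound.

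For the lower bound, I would start from Corollary \ref{global-lower-bound}, which reads
$$
c_{\text{apost}}\,\etamesh^2(\umesh,\data) \le \vert u-\umesh\vert_{1,\Omega}^2 + \Smesh(\umesh,\umesh),
$$
move the stabilization term to the left-hand side, and again bound it through Proposition \ref{prop:bound-ST}. This gives
$$
\bigl(c_{\text{apost}}-C_B\gamma^{-2}\bigr)\etamesh^2(\umesh,\data) \le \vert u-\umesh\vert_{1,\Omega}^2,
$$
which is the claimed lower bound with $C_L=c_{\text{apost}}-C_B\gamma^{-2}$. The assumption $\gamma^2>C_B/c_{\text{apost}}$ is exactly what ensures $C_L>0$, so the estimate carries nontrivial information.

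There is no real obstacle in this proof; the work has been absorbed into the three ingredients that precede it. The only remark worth including is that the condition on $\gamma$ is purely algebraic (needed to make $C_L$ positive), and that it is independent of the mesh $\mesh$ and of the solution $\umesh$ because the constants $c_{\text{apost}}$, $C_{\text{apost}}$, and $C_B$ are all mesh- and solution-independent (depending only on $\Lambda$ and on the data $\data$).
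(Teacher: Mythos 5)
Your proof is correct and is exactly the argument the paper intends: the corollary is obtained by substituting the stabilization bound $\Smesh(\umesh,\umesh)\le C_B\gamma^{-2}\etamesh^2(\umesh,\data)$ from Proposition \ref{prop:bound-ST} into the upper bound of Proposition \ref{prop:aposteriori} and into the rearranged lower bound of Corollary \ref{global-lower-bound}, with the condition on $\gamma$ serving only to make $C_L$ positive. The paper gives no explicit proof beyond the one-line remark that the result follows by combining these three estimates, so your write-up is a faithful and complete expansion of that step.
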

\smallskip

The sharpness of \eqref{apost:stab-free} will be investigated also from the numerical standpoint in Section \ref{sec:numerics-1}.

\section{Adaptive VEM with contraction property}\label{sec:AVEM-pcwconstant}
In Section \ref{sub:galerkin}, we introduce an Adaptive Virtual Element Method (AVEM), called {\tt GALERKIN}, for approximating \eqref{eq:pde-var} to a given tolerance under Assumption \ref{ass:constant-coeff} (piecewise constant data). We investigate the effect of local mesh refinements on our error estimator in Section \ref{S:change-error-estimator}, and prove a contraction property of {\tt GALERKIN} in Section \ref{S:convergence}. The design and analysis of an AVEM able to handle also variable data is postponed to
\cite{BCNVV:22}.

\subsection{The module {\tt GALERKIN}}
\label{sub:galerkin}
%
Given a $\Lambda$-admissible input mesh $\mesh_0$, piecewise constant input data $\data$ on $\mesh_0$ and a tolerance $\varepsilon>0$, the call
\begin{equation}\label{module:_GAL}
 [\mesh,u_{\mesh}]={\tt GALERKIN}(\mesh_0,\data,\varepsilon)
\end{equation}
produces  a $\Lambda$-admissible bisection refinement $\mesh$ of
$\mesh_0$ and the Galerkin approximation $u_{\mesh} \in \Vmesh$ to the solution ${u}$ of problem \eqref{eq:pde} with piecewise constant data $\data$ on $\mesh_0$, such that
\begin{equation}\label{aux:AVEM:estimate}
\vvvert u -u_{\mesh} \vvvert \leq C_G \, \varepsilon \,,
\end{equation}
with $C_G=\sqrt{c^\cB C_U}$, where $c^\cB$ is defined in \eqref{norm:equiv} and 
$C_U$ is the upper bound constant in Corollary \ref{Corollary:stab-free}.
This is obtained by iterating the classical paradigm 
\begin{equation}\label{eq:paradigm}
\texttt{SOLVE} \,\,
\longrightarrow \,\,
\texttt{ESTIMATE} \,\,
\longrightarrow \,\,
\texttt{MARK} \,\,
\longrightarrow \,\,
\texttt{REFINE}
\end{equation}
thereby producing a sequence of $\Lambda$-admissible meshes $\{\mesh_k\}_{k\geq 0}$, and associated Galerkin solutions $u_k\in\V_{\mesh_k}$ to the problem \eqref{eq:pde} with data $\mathcal{D}$. The iteration stops as soon as $\eta_{\mesh_k} (u_k, \data) \leq \varepsilon$, which is possible thanks to the convergence result stated in Theorem \ref{T:contraction} below.

The modules in \eqref{eq:paradigm} are defined as follows: given piecewise constant data $\data$ on $\mesh_0$,

\medskip
\begin{enumerate}[$\bullet$]
\item $[u_\mesh]=\texttt{SOLVE}(\mesh, \data)$ produces the Galerkin solution on the mesh $\mesh$ for data $\data$;
\item $[\{\eta_\mesh(\, \cdot \, ;u_\mesh,\data)\}]=\texttt{ESTIMATE}(\mesh, u_\mesh)$ computes the local residual estimators \eqref{eq:estim3} on the mesh $\mesh$, which depend on the Galerkin solution $u_\mesh$ and data $\data$;
\item $[\mathcal{M}] = \texttt{MARK}(\mesh, \{\eta_\mesh(\, \cdot \, ;u_\mesh,\data)\}, \theta)$  implements  the D{\"o}rfler criterion \cite{dorfler}, namely for a given parameter $\theta \in (0,1)$ an almost minimal set $\mathcal{M} \subset \mesh$ is found such that
\begin{equation}
\label{eq:dorfler}
\theta \eta_{\mesh}^2 (u_\mesh, \data) \leq 
\sum_{E \in \mathcal{M}} \eta_{\mesh}^2(E; 
u_\mesh, \data) \,;
\end{equation} 
\item $[\meshs]=\texttt{REFINE}(\mesh, \mathcal{M}, \Lambda)$ produces a $\Lambda$-admissible refinement $\meshs$ of $\mesh$, obtained by newest-vertex bisection of all the elements in $\mathcal{M}$ and, possibly, some other elements of $\mesh$.
\end{enumerate}

\medskip
In the procedure \texttt{REFINE}, non-admissible hanging nodes,  i.e., hanging nodes with global index larger than $\Lambda$, might be created while refining elements in $\mathcal{M}$.
Thus, in order to obtain a  $\Lambda$-admissible partition $\meshs$, \texttt{REFINE} possibly refines other elements in $\mesh$ (completion). A practical completion procedure, called \texttt{MAKE\_ADMISSIBLE}, is described in Section \ref{sec:make-admissible}, while the complexity analysis of such a procedure is discussed in the forthcoming paper \cite{BCNVV:22}.

The following crucial property of {\tt GALERKIN} guarantees its convergence in a finite number of iterations proportional to $|\log \varepsilon|$. We postpone its proof to Section \ref{S:convergence}. 

\begin{theorem}[contraction property of {\tt GALERKIN}]\label{T:contraction}
Let $\mathcal{M}\subset \mesh$ be the set of marked elements relative to the Galerkin solution $\umesh\in\Vmesh$. If $\meshs$ is the refinement of $\mesh$ obtained by applying {\tt REFINE}, then 
for $\gamma$ sufficiently large, there exist constants $\alpha\in (0,1)$ and $\beta>0$ such that one has
\begin{eqnarray}
\vvvert u - \umeshs \vvvert^2+\beta \etameshs^2(\umeshs,\data) \leq \alpha \left( \vvvert u - \umesh \vvvert^2+\beta \etamesh^2(\umesh,\data) \right).
\end{eqnarray}
\end{theorem}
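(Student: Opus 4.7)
The plan is to follow the Cascón–Kreuzer–Nochetto–Siebert contraction strategy from AFEM theory, carefully adapted to cope with two VEM-specific obstructions: the lack of space nestedness ($\Vmesh \not\subseteq \Vmeshs$ in general, since a VEM function on a parent element is not typically harmonic with piecewise linear trace on the children's boundaries), and the absence of exact Galerkin orthogonality due to the stabilization term. The three core ingredients are (i) a quasi-Pythagoras estimate playing the role of Galerkin orthogonality, (ii) an estimator reduction driven by Dörfler marking, and (iii) an absorption step that combines both with the stabilization-free upper bound of Corollary \ref{Corollary:stab-free}. The thread running through the proof is Proposition \ref{prop:bound-ST}, which converts every stabilization contribution into a $\gamma^{-2}$-small portion of the residual estimator and thereby tames the orthogonality defect when $\gamma$ is large.

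\textbf{Step 1: quasi-orthogonality.} From the algebraic identity
\[
\vvvert u - \umeshs\vvvert^2 = \vvvert u - \umesh\vvvert^2 - \vvvert \umeshs - \umesh\vvvert^2 - 2\,\cB(u - \umeshs,\,\umeshs - \umesh),
\]
the cross term must be controlled. To that end I would split $\umeshs - \umesh = w^0 + w^\perp$, with $w^0 := \Imeshsz \umeshs - \Imeshz \umesh \in \Vmeshsz$ (exploiting the one piece of genuine nestedness available, $\Vmeshz \subseteq \Vmeshsz$, since piecewise linears on $\mesh$ remain piecewise linear on any refinement), and $w^\perp := (\umeshs - \Imeshsz \umeshs) - (\umesh - \Imeshz \umesh)$. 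Applying \eqref{aux:PGO} on $\meshs$ to $w^0$, and Proposition \ref{prop:compareInterp} together with \eqref{eq:stab-norm} to bound $|w^\perp|_{1,\Omega}$ by $\Smeshs(\umeshs,\umeshs)^{1/2} + \Smesh(\umesh,\umesh)^{1/2}$, and then using Proposition \ref{prop:bound-ST} to replace each stabilization factor by $\gamma^{-1}\etameshs(\umeshs,\data)$ or $\gamma^{-1}\etamesh(\umesh,\data)$, Young's inequality yields
\[
\vvvert u - \umeshs\vvvert^2 \le \vvvert u - \umesh\vvvert^2 - \tfrac{1}{2}\vvvert \umeshs - \umesh\vvvert^2 + \frac{C_1}{\gamma^2}\bigl(\etamesh^2(\umesh,\data) + \etameshs^2(\umeshs,\data)\bigr).
\]
The residual $w^\perp$-piece of the cross term is handled by Cauchy–Schwarz $\vvvert u - \umeshs\vvvert\,\vvvert w^\perp\vvvert$ together with the upper bound of Corollary \ref{Corollary:stab-free}, which converts $\vvvert u - \umeshs\vvvert$ into $\etameshs$; small residual $\epsilon\,\vvvert u - \umeshs\vvvert^2$ arising from Young's inequality can be absorbed on the left, and the resulting $(1-\epsilon)^{-1}$ in front of $\vvvert u - \umesh\vvvert^2$ is made harmless by the splitting used in Step 3.

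\textbf{Steps 2 and 3: estimator reduction and combination.} Because $\data$ is piecewise constant on $\mesh_0$, no data oscillation appears under refinement. A standard element-by-element calculation — distinguishing subelements of $\meshs$ lying inside a marked triangle $E\in\mathcal{M}$ (where NVB halves $h_E^2$) from those lying outside, using $h_e \simeq h_E$ from Remark \ref{rem:bound-global-index} and the Dörfler criterion \eqref{eq:dorfler}, and bounding the change of the $\Pi^\nabla$-dependent part of the residual via inverse and trace inequalities — gives
\[
\etameshs^2(\umeshs,\data) \le \rho\,\etamesh^2(\umesh,\data) + C_2\,|\umeshs - \umesh|_{1,\Omega}^2
\]
for some $\rho \in (0,1)$ depending only on $\theta$. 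Setting $\mathcal{E}_\mesh := \vvvert u - \umesh\vvvert^2 + \beta\,\etamesh^2(\umesh,\data)$, adding $\beta$ times the estimator reduction to Step 1, absorbing $\beta C_2\,|\umeshs - \umesh|_{1,\Omega}^2$ into $\tfrac{1}{2}\vvvert \umeshs - \umesh\vvvert^2$ via \eqref{norm:equiv} (which forces $\beta \le c_\cB/(2C_2)$), and splitting $\vvvert u - \umesh\vvvert^2 = (1-\delta)\vvvert u - \umesh\vvvert^2 + \delta\,\vvvert u - \umesh\vvvert^2$ with $\delta\,\vvvert u - \umesh\vvvert^2 \le \delta\,c^\cB C_U\,\etamesh^2(\umesh,\data)$ by Corollary \ref{Corollary:stab-free}, the proof reduces to the algebraic inequalities $1 - \delta \le \alpha$ and $\beta\rho + \delta\,c^\cB C_U + C_1/\gamma^2 \le \alpha\beta$ for some $\alpha \in (\rho,1)$ — with the latter also ensuring that the $\etameshs^2$-term from Step 1 can be absorbed on the left. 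Choosing $\alpha$ first, then $\delta$ small, then $\gamma$ large, and finally $\beta$ in the admissible window yields the contraction.

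\textbf{Main obstacle.} The crux is Step 1. In AFEM, Galerkin orthogonality makes the cross term vanish outright; here it does not. The lack of nestedness forbids the simplest splittings, so the argument relies on the strictly coarser-but-nested subspace $\Vmeshz \subseteq \Vmeshsz$, bridged to $\Vmeshs$ via Proposition \ref{prop:compareInterp}; the residual defect across this bridge is then controlled, through a factor $\gamma^{-1}$, by Proposition \ref{prop:bound-ST}. The delicate interplay between these two novel VEM ingredients and the scaled Poincaré inequality of Proposition \ref{prop:scaledPoincare}, together with the need to quantitatively balance $\alpha$, $\beta$, $\delta$ and $\gamma$, is what allows the CKNS machinery to go through in the VEM setting.
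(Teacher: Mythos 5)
Your overall architecture matches the paper's proof closely: the same three-stage strategy (quasi-orthogonality via the decomposition $\umeshs-\umesh=(\umeshs-\Imeshsz\umeshs)-(\umesh-\Imeshz\umesh)+(\Imeshsz\umeshs-\Imeshz\umesh)$ through the nested space $\Vmeshz\subseteq\Vmeshsz$, estimator reduction driven by D\"orfler marking, and the combination step with $\beta$-weighting and final absorption via Corollary~\ref{Corollary:stab-free} and large $\gamma$), and the same role for Proposition~\ref{prop:bound-ST} as the mechanism that tames every stabilization defect by $\gamma^{-2}\eta^2$. So you have the right proof in mind.

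There is, however, a genuine gap in Step~2. Your claimed estimator reduction
$\etameshs^2(\umeshs,\data)\le\rho\,\etamesh^2(\umesh,\data)+C_2\,|\umeshs-\umesh|_{1,\Omega}^2$
is the AFEM version and is \emph{not} what holds here. In VEM the projection $\PiE$ does not commute with refinement: if $E$ is bisected into $E_1,E_2$, then $\Pi^\nabla_{E_i}v\neq(\PiE v)|_{E_i}$ even for the same discrete $v=\umesh$, and this discrepancy is controlled by $\|v-\mathcal{I}_E v\|_{1,E}$, i.e.\ by $S_E(\umesh,\umesh)$ — not by $|\umeshs-\umesh|^2$. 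This is exactly what produces the extra term $C_{er,1}S_\mesh(\umesh,\umesh)$ in Proposition~\ref{prop: reduction:2} of the paper, via the splitting $j_\meshs-j_\mesh$ and $r_{E_i}-r_E$ in Lemma~\ref{prop:v-v comparison}. The inverse/trace inequalities you invoke capture the passage $\umesh\to\umeshs$, but not the passage $\Pi^\nabla_E\to\Pi^\nabla_{E_i}$ for a fixed VEM function. The omission is fixable within your own strategy — you would insert $S_\mesh(\umesh,\umesh)\le C_B\gamma^{-2}\etamesh^2$, enlarging $\rho$ to some $\rho'<1$ for $\gamma$ large enough — but as written your Step~2 underestimates the role of stabilization in the estimator reduction, which is one of the central VEM-specific subtleties this theorem has to overcome.

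A minor further point: your displayed quasi-orthogonality in Step~1 has a clean coefficient~$1$ on $\vvvert u-\umesh\vvvert^2$ while your prose admits a $(1-\epsilon)^{-1}$ inflation; the paper records this honestly as the factor $(1+4\delta)$ in Corollary~\ref{coroll:quasi-orth}, and it must be retained because it feeds into the final tuning of $\delta,\gamma,\alpha$.
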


\subsection{Error estimator under local mesh refinements}\label{S:change-error-estimator}
In this section we prove three crucial properties of the error estimator that will be employed in studying the convergence of {\tt GALERKIN}.

Let $\meshs$ be a refinement of $\mesh$ produced by {\tt REFINE} by bisection. Consider an element $E \in \mesh$ which has been split into two elements $E_1, E_2 \in \meshs$. If $v \in \Vmesh$, then $v$ is known on $\partial E$, hence in particular at the new vertex of $E_1, E_2$ created by bisection. Thus, $v$ is known at all nodes (vertices and possibly hanging nodes) sitting on $\partial E_1$ and $\partial E_2$, since the new edge $e=E_1\cap E_2$ does not contain internal nodes. This uniquely identifies a function in $\V_{E_1}$ and a function in $\V_{E_2}$, which are continuous across $e$.
In this manner, we associate to any $v \in \Vmesh$ a unique function $v_* \in \Vmeshs$, that coincides with $v$ on the skeleton $\edge$ (but possibly not on $\edges$). We will actually write $v$ for $v_*$ whenever no confusion is possible.

\subsubsection*{a. Comparison of residuals under refinement}
Let $\meshs$ be a refinement of $\mesh$ as above, and let $E \in \mesh$ be an element that has been split into two elements $E_1, E_2 \in \meshs$; observe that $h_{E_i} = \frac1{\sqrt{2}} h_E$, $i=1,2$. Given $v \in \V_\mesh$, we aim at comparing the local residual estimator $\etamesh(E; v,\data)$ defined in \eqref{eq:estim3} to the local estimator $\etameshs(E;v,\data)$ defined by
\begin{equation}\label{eq:residual-comp1}
\etameshs^2(E;v,\data) := \sum_{i=1}^2 \etameshs^2(E_i;v,\data)= \sum_{i=1}^2 h_{E_i}^2 \Vert  f_{E_i} \,    - \, c_{E_i} \, \PiEi v \Vert_{0,E_i}^2 + \tfrac12 {\sum_{i=1}^2 \sum_{e \in {\cal E}_{E_i}} h_{E_i} \Vert j_{\meshs}(e;v) \Vert_{0,e}^2} \;
\end{equation}
where it is important to observe that, as $\data$ does not change under refinement, we have 
$  f_{E_i}=f_E\vert_{E_i}$, $c_{E_i}=c_E\vert_{E_i}$, $A_{E_i}=A_E\vert_{E_i}$.

\begin{lemma}[local estimator reduction]\label{prop:v-v comparison}
There exist constants $\mu \in (0,1)$ and ${c_{er,1}}>0$ independent of $\mesh$ such that for any element $E \in \mesh$ which is split into two children $E_1,E_2 \in \meshs$, one has
\begin{equation}\label{eq:residual-comp0}
\etameshs(E;v,\data) \leq \mu \ \etamesh(E; v,\data) + {c_{er,1}} \, S^{1/2}_{\mesh(E)}(v,v)  \qquad \forall v \in \Vmesh \,,
\end{equation}
where $S_{\mesh(E)}(v,v) := \sum_{E' \in \mesh(E)} S_{E'}(v,v)$ with $\mesh(E) := \{ E' \in \mesh : {\cal E}_{E} \cap {\cal E}_{E'}  \not= \emptyset \}$.
\end{lemma}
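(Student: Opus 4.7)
\emph{Proof plan.} The plan is to exploit the key VEM estimate
\[ |\PiEi v-\PiE v|_{1,E_i}\lesssim S_E(v,v)^{1/2},\quad i=1,2, \]
which follows from writing $\PiEi v-\PiE v=(v-\PiE v)-(v-\PiEi v)$, the $H^1$-seminorm optimality of $\PiEi$ on $E_i$ (testing against $q=(\PiE v)|_{E_i}\in\mathbb{P}_1(E_i)$), the analogous optimality of $\PiE$ on $E$ (testing against $\IE v$), and the equivalence $|v-\IE v|_{1,E}^2\simeq S_E(v,v)$ from \eqref{eq:stab-norm}--\eqref{eq:stab-sE}. With this in hand, I split $\etameshs^2(E;v,\data)$ into three contributions: the interior residuals on $E_1$ and $E_2$; the jump on the new edge $e^*=\overline{E_1}\cap\overline{E_2}$; and the jumps on the sub-edges $e'\subset\partial E$ (whose total number is bounded by Remark \ref{rem:bound-global-index}), each contained in a unique parent edge $e(e')\in\edgeE$.

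For the interior residual, I decompose $f_E-c_E\PiEi v=(f_E-c_E\PiE v)+c_E(\PiE v-\PiEi v)$ and apply $(a+b)^2\leq(1+\delta)a^2+(1+\delta^{-1})b^2$; using $h_{E_i}^2=h_E^2/2$ produces the contracted main term $\tfrac{1+\delta}{2}h_E^2\|f_E-c_E\PiE v\|_{0,E}^2$. The cross term is handled by the polynomial trace bound $\|g\|_{0,E_i}\lesssim h_{E_i}^{1/2}\|g\|_{0,\partial E_i}$, the mean-zero property $\int_{\partial E_i}(v-\PiEi v)=0$, a standard trace on $v-\PiE v$, and the key estimate; this yields $\|\PiE v-\PiEi v\|_{0,E_i}\lesssim h_E\,S_E(v,v)^{1/2}$ and hence a data-dependent multiple of $S_E(v,v)$. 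For the new edge $e^*$, the jump equals $A_E(\nabla\Pi^\nabla_{E_1} v-\nabla\Pi^\nabla_{E_2} v)\cdot\boldsymbol{n}_1$; writing each gradient as $\nabla\PiEi v-\nabla\PiE v$ and using the key estimate yields pointwise magnitude $\lesssim h_{E_i}^{-1}S_E(v,v)^{1/2}$, so multiplying by $h_{E_i}\,h_{e^*}\simeq h_{E_i}^2$ gives $h_{E_i}\|j_{\meshs}(e^*;v)\|_{0,e^*}^2\lesssim S_E(v,v)$.

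The delicate case is the jumps on sub-edges $e'\subset\partial E$. Here I exploit that the neighbor $E'$ of $E$ across the parent edge $e(e')$ is not refined and $v|_{E'}$ is untouched by the bisection of $E$, so $j_{\meshs}(e';v)-j_{\mesh}(e(e');v)=A_E\nabla(\PiEi v-\PiE v)\cdot\boldsymbol{n}$; Young's inequality combined with the key estimate then yields $h_{E_i}\|j_{\meshs}(e';v)\|_{0,e'}^2\leq(1+\delta)h_{E_i}\|j_{\mesh}(e(e');v)\|_{0,e'}^2+C(\delta)S_E(v,v)$. Summing over the sub-edges of each parent $e\in\edgeE$ (whose jump $j_{\mesh}(e;v)$ is constant on $e$) and using $h_{E_i}=h_E/\sqrt{2}$ converts the factor $(1+\delta)h_{E_i}$ into $\tfrac{1+\delta}{\sqrt{2}}h_E$. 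Combining all three contributions gives
\[ \etameshs^2(E;v,\data)\;\leq\;\tfrac{1+\delta}{\sqrt{2}}\,\etamesh^2(E;v,\data)+C(\delta)\,S_E(v,v). \]
Picking $\delta\in(0,\sqrt{2}-1)$, taking square roots via $\sqrt{A+B}\leq\sqrt{A}+\sqrt{B}$, setting $\mu:=\bigl((1+\delta)/\sqrt{2}\bigr)^{1/2}\in(0,1)$, and bounding $S_E(v,v)\leq S_{\mesh(E)}(v,v)$ then delivers \eqref{eq:residual-comp0}. The hardest step will be the bookkeeping in the last case: correctly tracking how each parent edge subdivides after bisection, verifying that the neighbor-side contribution to every jump is unchanged across $\mesh\to\meshs$, and marshalling the dimensional factors so that the $1/\sqrt{2}$ emerges cleanly and the coefficient in front of $\etamesh^2(E;v,\data)$ drops strictly below one.
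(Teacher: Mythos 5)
Your proposal follows the paper's strategy in its broad strokes (Young splitting of residuals and jumps, $h_{E_i}^2 = h_E^2/2$ to generate the contraction factor, vanishing of the coarse jump on the new edge, and bounding projection differences via minimality and the stabilization norm). The handling of the cross-terms in the interior residual is organized slightly differently (via boundary traces rather than the paper's direct $L^2$ decomposition plus Poincar\'e), but that portion is workable.

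However, there is a genuine gap in the jump-term treatment on the sub-edges of $\partial E$. You explicitly assume that ``the neighbor $E'$ of $E$ across the parent edge $e(e')$ is not refined,'' so that the neighbor-side contribution to $j_{\meshs}(e';v)$ equals the neighbor-side contribution to $j_{\mesh}(e(e');v)$, yielding
\[
j_{\meshs}(e';v) - j_{\mesh}(e(e');v) = A_E \nabla(\PiEi v - \PiE v)\cdot\boldsymbol{n}.
\]
This is not warranted: the lemma must hold for an arbitrary bisection refinement $\meshs$ of $\mesh$, where neighbors of $E$ may also be refined (indeed, they frequently are, due to the admissibility completion step). When a neighbor $E' \in \mesh$ is bisected, the fine-mesh jump across any sub-edge $e'\subset\partial E$ also changes on the $E'$ side: $\Pi^\nabla_{\meshs}$ there is the projector on a child of $E'$, not on $E'$. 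The paper's proof correctly accounts for this by writing
\[
\Vert j_{\meshs}(e;v) - j_{\mesh}(e;v)\Vert_{0,e} \leq \Vert A_E \nabla(\Pi^\nabla_{E_i} - \Pi^\nabla_E)v\Vert_{0,e} + \Vert A_{\widehat{E}_{i,e}} \nabla(\Pi^\nabla_{E_{i,e}} - \Pi^\nabla_{\widehat{E}_{i,e}})v\Vert_{0,e},
\]
with $\widehat{E}_{i,e} \in \mesh$ the parent of the $\meshs$-neighbor $E_{i,e}$, and then controls the second term by $S_{E'}(v,v)$ for $E'\in\mesh(E)$. That is precisely why the statement has $S_{\mesh(E)}(v,v)$ (sum over $E$ and its edge-neighbors in $\mesh$) on the right-hand side, not merely $S_E(v,v)$. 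Your argument produces only $S_E(v,v)$ and then bounds it by $S_{\mesh(E)}(v,v)$ a posteriori, which masks the fact that the intermediate estimate you claim fails whenever a neighbor of $E$ is also refined. To repair the argument, replace the claimed identity by the two-sided bound above and track the contribution from each refined neighbor, arriving at $S_{\mesh(E)}(v,v)$ directly.
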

\begin{proof}
As $f_E$ and $c_E$ do not change under refinement, we simplify the notation and write
$ r_E=f-c\PiE v$, $r_{E_i}=f-c\PiEi v$ (i=1,2)
with $f=f_E$ and $c=c_E$. We have 
$
r_{E_i}=f-c\PiEi v= r_E + c(\PiE v -\PiEi v)
$,
whence
\begin{equation}\label{eq:add}
\begin{aligned}
\sum_{i=1}^2 h_{E_i}^2 \|r_{E_i}\|_{0,E_i}^2
&\leq \sum_{i=1}^2 h_{E_i}^2 \left( 
(1+\epsilon)\| r_E\|_{0,E_i}^2 + \left(1+\frac{1}{\epsilon}\right)
\|c(\PiE v -\PiEi v)\|_{0,E_i}^2
\right)\\
&\leq  \frac{1+\epsilon}{2}h_E^2 \|r_E\|_{0,E}^2 + h_E^2 \left(1+\frac{1}{\epsilon}\right)\frac{{c_E^2}}{2}
\sum_{i=1}^2 \|\PiE v -\PiEi v\|_{0,E_i}^2.
\end{aligned}
\end{equation}
In addition, we see that
$
\displaystyle{\sum_{i=1}^2 \|\PiE v -\PiEi v\|_{0,E_i}^2 \leq
2  \|v -\PiE v\|_{0,E}^2 + 2 
\sum_{i=1}^2 \|v -\PiEi v\|_{0,E_i}^2}.
$
Applying the Poincar\'e inequality and the minimality of $\PiE$, we get 
\begin{align*}
\|v - \PiE v \|_{0,E}^2 &\lesssim \vert \PiE v -v\vert_{1,E}^2 \leq  \vert \IE v -v\vert_{1,E}^2 \,,
\\
\sum_{i=1}^2 \|v -\PiEi v\|_{0,E_i}^2 &\lesssim 
\sum_{i=1}^2 \vert v -\PiEi v\vert_{1,E_i}^2
\lesssim  \vert v -\PiE v\vert_{1,E}^2 \leq 
 \vert v -\IE v\vert_{1,E}^2.
\end{align*}
Therefore, choosing $\epsilon=\frac 1 2$, setting $\mu := \frac{1+\epsilon}{2} = \frac 3 4$ in
  \eqref{eq:add}, and employing \eqref{eq:stab-norm} we infer that
\begin{eqnarray}\label{eq:residual-comp2}
\sum_{i=1}^2 h_{E_i}^2 \|r_{E_i}\|_{0,E_i}^2 \leq 
\mu\, h_E^2 \|r_E\|_{0,E}^2 + C h_E^2 \vert v - \IE v \vert_{1,E}^2
\leq  \mu\, h_E^2 \|r_E\|_{0,E}^2 + C h_E^2 S_E(v,v).
\end{eqnarray}

Concerning the jump terms, we first observe that writing 
$
 j_{\meshs}(e;v) =  j_{\mesh}(e;v) + \big(j_{\meshs}(e;v) - j_{\mesh}(e;v) \big)
$,
one has for any $\epsilon > 0$
\begin{equation}\label{eq:residual-comp3}
\sum_{i=1}^2 \sum_{e \in {\cal E}_{E_i}} h_{E_i} \Vert j_{\meshs}(e;v) \Vert_{0,e}^2   \leq (1+\epsilon)  T_1  + \left(1+\tfrac1\epsilon\right) T_2 \,,  
\end{equation}
with 
$T_1 := \displaystyle{\sum_{i=1}^2 \sum_{e \in {\cal E}_{E_i}} h_{E_i} \Vert j_{\mesh}(e;v) \Vert_{0,e}^2} $, \ 
$T_2 := \displaystyle{\sum_{i=1}^2 \sum_{e \in {\cal E}_{E_i}} h_{E_i} \Vert j_{\meshs}(e;v) - j_{\mesh}(e;v) \Vert_{0,e}^2} $.

Considering $T_1$, notice that $ j_{\mesh}(e;v)= 0$ on the new edge created by the bisection of $E$; hence, 
\begin{equation}\label{eq:residual-comp4}
T_1 \leq \frac1{\sqrt{2}} \sum_{e \in {\cal E}_{E}} h_{E} \Vert j_{\mesh}(e;v) \Vert_{0,e}^2 \,.
\end{equation}
In order to bound the second term $T_2$, define $\meshs(E_i) := \{ E' \in \meshs : {\cal E}_{E_i} \cap {\cal E}_{E'} 
\not= \emptyset \}$; for an edge $e \in {\cal E}_{E_i}$, let $E_{i,e} \in \meshs(E_i)$ be such that $e = \partial E_i \cap \partial E_{i,e}$. Then,
\begin{equation*}
\begin{aligned}
\Vert j_{\meshs}(e;v) - j_{\mesh}(e;v) \Vert_{0,e} &= \Vert \jump{A\nabla (\Pi^\nabla_\meshs - \Pi^\nabla_\mesh)v} \Vert_{0,e} 
\\
&\leq \Vert A_E \nabla (\Pi^\nabla_{E_i} - \Pi^\nabla_E)v
\Vert_{0,e} + \Vert A_{\widehat{E}_{i,e}} \nabla (\Pi^\nabla_{E_{i,e}} - \Pi^\nabla_{\hat{E}_{i,e}})v \Vert_{0,e} \,,
\end{aligned}
\end{equation*}
where, in general, $\widehat{F} \in \mesh$ denotes the parent element of $F \in \meshs$. Hence, using the trace inequality  and the equivalence $h_{E_i} \simeq h_{E_{i,e}}$, we easily get
\begin{equation*}
\begin{aligned}
T_2 &\lesssim  \sum_{i=1}^2 \sum_{E' \in \meshs(E_i)} \Vert \nabla (\Pi^\nabla_{E'} - \Pi^\nabla_{\widehat{E}'})v \Vert_{0,E'}^2  
\lesssim  \sum_{i=1}^2 \sum_{E' \in \meshs(E_i)} \big( \Vert \nabla (v- \Pi^\nabla_{E'}v) \Vert_{0,E'}^2 + \Vert \nabla (v - \Pi^\nabla_{\widehat{E}'}v) \Vert_{0,E'}^2 \big) \,.
\end{aligned}
\end{equation*}
The minimality property of the orthogonal projections $\Pi^\nabla_{E'}$ and $\Pi^\nabla_{\widehat{E}'}$ yields
$$
\Vert \nabla (v- \Pi^\nabla_{E'}v) \Vert_{0,E'} \leq \Vert \nabla (v- {\cal I}_{\widehat{E}'}v) \Vert_{0,E'} \leq \Vert \nabla (v- {\cal I}_{\widehat{E}'}v) \Vert_{0,\widehat{E}'}
$$
and
$$
\Vert \nabla (v - \Pi^\nabla_{\widehat{E}'}v) \Vert_{0,E'} \leq \Vert \nabla (v - \Pi^\nabla_{\widehat{E}'}v) \Vert_{0,\widehat{E}'} \leq \Vert \nabla (v- {\cal I}_{\widehat{E}'}v) \Vert_{0,\widehat{E}'} \,.
$$
This gives
\begin{equation}\label{eq:residual-comp5}
T_2 \leq C \sum_{E' \in \mesh(E)} \Vert \nabla (v- {\cal I}_{{E}'}v) \Vert_{0,{E}'}^2 \leq C \sum_{E' \in \mesh(E)} S_{E'}(v,v) \,,
\end{equation}
thanks to \eqref{eq:stab-sE} and \eqref{eq:def-SE}. Using \eqref{eq:residual-comp2}, \eqref{eq:residual-comp3} with a sufficiently small $\epsilon$, \eqref{eq:residual-comp4} and \eqref{eq:residual-comp5}, we arrive at the desired result.
\end{proof}

\subsubsection*{b. Lipschitz continuity of the residual estimator}
Since the following result can be proven by standard arguments, we only sketch its proof.

\begin{lemma}[Lipschitz continuity of error estimator]\label{prop:v-w comparison}
There exists a constant ${ c_{er,2}}>0$ independent of $\mesh$ such that for any element $E \in \mesh$, one has
\begin{equation}\label{eq:residual-comp6}
\big|\etamesh(E;v,\data) - \etamesh(E;w,\data)\big| \leq {c_{er,2}} \, |v-w|_{1,\mesh(E)} \qquad \forall v,w \in  { H^1(\Omega)}\,,      
\end{equation}
where $|v-w|_{1,\mesh(E)}^2 :=\sum_{E' \in \mesh(E)} | v-w |_{1,E'}^2$.
\end{lemma}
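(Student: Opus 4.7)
The plan is to exploit that $\etamesh(E;\cdot,\data)$ is essentially the Euclidean ($\ell^2$) norm of a vector whose entries are the weighted $L^2$-norms of the internal residual and of the jump across each edge of $E$; each such entry depends \emph{linearly} on its argument, with the data-dependent constants $f_E$ cancelling in the difference. The reverse triangle inequality for the Euclidean norm therefore yields
\begin{equation*}
\big|\etamesh(E;v,\data) - \etamesh(E;w,\data)\big|^2
\leq h_E^2 \|c_E\,\PiE(v-w)\|_{0,E}^2
+ \tfrac12 \sum_{e\in\edgeE} h_E \|\jump{A\,\nabla \Pimesh(v-w)}_e\|_{0,e}^2,
\end{equation*}
where linearity of $\Pi^\nabla$ has been used to replace $\rmesh(E;v,\data)-\rmesh(E;w,\data)$ by $-c_E\,\PiE(v-w)$ and $\jmesh(e;v,\data)-\jmesh(e;w,\data)$ by $\jump{A\,\nabla\Pimesh(v-w)}_e$.

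I would then tackle the jump contribution first, as it is the cleanest. For any edge $e\in\edgeE$ shared with a neighbour $E'$ and any $F\in\{E,E'\}$, a scaled inverse trace inequality applied to the linear polynomial $A_F\,\nabla \Pi^\nabla_F(v-w)$ gives
\begin{equation*}
h_E \|A_F\,\nabla \Pi^\nabla_F(v-w)\|_{0,e}^2 \lesssim \|A_F\,\nabla \Pi^\nabla_F(v-w)\|_{0,F}^2 \lesssim \|A\|_\infty^2\, |v-w|_{1,F}^2,
\end{equation*}
using the $H^1$-seminorm stability $|\Pi^\nabla_F\phi|_{1,F}\leq|\phi|_{1,F}$ inherent to the definition \eqref{eq:def-PinablaE} of $\Pi^\nabla_F$ (its gradient is the $L^2$-projection of $\nabla\phi$ onto constants). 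Summing over $e\in\edgeE$, whose cardinality is uniformly bounded by Remark~\ref{rem:bound-global-index}(i), controls the jump sum by $|v-w|_{1,\mesh(E)}^2$.

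For the internal residual I would invoke the $H^1$-to-$L^2$ continuity of $\PiE$---which follows from its gradient stability together with the boundary-average condition in \eqref{eq:def-PinablaE}---to bound $h_E\|c_E\,\PiE(v-w)\|_{0,E}$ by $h_E|c_E|\big(\|v-w\|_{0,E}+h_E|v-w|_{1,E}\big)$ up to shape-regularity constants; a Poincar\'e/Friedrichs inequality then absorbs the $L^2$-term into the seminorm (globally, since the intended application has $v,w\in H^1_0(\Omega)$), at the cost of inflating $c_{er,2}$ by factors depending on $\data$ and $\operatorname{diam}(\Omega)$. The main technical nuisance---and the reason the authors only sketch the argument---is precisely this bookkeeping of data- and domain-dependent constants needed to control the zeroth-order term; the structural reverse-triangle/linearity mechanism underlying both steps is entirely standard.
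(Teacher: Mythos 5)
Your decomposition mirrors the paper's (extremely terse) argument: reverse triangle inequality on the $\ell^2$ structure of $\etamesh(E;\cdot,\data)$, linearity of $\Pi^\nabla$ so that only $\Pi^\nabla(v-w)$ appears, inverse trace estimates on polynomials together with the $H^1$-seminorm stability $|\Pi^\nabla_F \phi|_{1,F}\leq|\phi|_{1,F}$ to control the jump contributions by $|v-w|_{1,\mesh(E)}^2$, and Remark~\ref{rem:bound-global-index} to bound the number of edges. For the jump part your argument is complete and is precisely what the paper means by ``standard FE arguments ... employing inverse estimates.''

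The problem is the zeroth-order term, and you have actually put your finger on a real obstruction rather than a bookkeeping nuisance. The lemma claims a \emph{local} bound, by the $H^1$-seminorm over the patch $\mesh(E)$ alone. Your chain ends by invoking a \emph{global} Poincar\'e/Friedrichs inequality to absorb $h_E|c_E|\,\|v-w\|_{0,E}$, which produces a bound by $|v-w|_{1,\Omega}$ --- a strictly weaker, non-local statement, and not the inequality \eqref{eq:residual-comp6} you were asked to prove. No local fix of the kind you attempt can work unconditionally: if $v-w$ is constant and nonzero on a neighbourhood of $E$, then $\Pi^\nabla_E(v-w)$ equals that constant, so $|v-w|_{1,\mesh(E)}=0$ while $h_E|c_E|\,\|\Pi^\nabla_E(v-w)\|_{0,E}\neq0$ whenever $c_E\neq 0$. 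The constant mode of $\Pi^\nabla_E(v-w)$ is simply not controlled by any patch-local gradient seminorm. This is exactly the point on which the paper's one-line reduction to $|\Pi^\nabla_{\mesh(E)}(v-w)|_{1,\mesh(E)}$ is silent: inverse estimates dispose of the edge terms, but there is no inverse estimate of the form $\|p\|_{0,E}\lesssim h_E |p|_{1,E}$ for a general $p\in\mathbb{P}_1(E)$. So your proposal correctly diagnoses where the difficulty lies, but the proposed remedy (global Poincar\'e) does not establish the stated local estimate; what you prove is the weaker global variant, which happens to be all that is used after summation in Proposition~\ref{prop: reduction:2}, but it is not Lemma~\ref{prop:v-w comparison} as written.
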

\begin{proof}
By standard FE arguments, using the fact that $\etamesh(E;v,\data) $ and $\etamesh(E;w,\data)$ are written in terms of $\PiE v$ and $\PiE w$, which are polynomials, and employing inverse estimates,  we easily obtain 
\begin{eqnarray}
\big|\etamesh(E;v,\data) - \etamesh(E;w,\data)\big| 
\leq C \, |\Pi^\nabla_{\mesh(E)}(v-w)|_{1,\mesh(E)} 
\leq C \, |v-w|_{1,\mesh(E)} \,,
\end{eqnarray}
where the last inequality uses the definition of $\PiE$.
\end{proof}

\subsubsection*{c. Reduction property for the local residual estimators}
Concatenating Lemma \ref{prop:v-v comparison} with Lemma \ref{prop:v-w comparison} we arrive at the following reduction property for the local residual estimators.
\begin{proposition}[estimator reduction property on refined elements]\label{prop: reduction}
There exist constants ${ \mu} \in (0,1)$, ${c_{er,1}}>0$ and ${ c_{er,2}}>0$ independent of $\mesh$ such that for any $v \in \Vmesh$ and $w \in \Vmeshs$, and any element $E \in \mesh$ which is split into two children $E_1,E_2 \in \meshs$, one has 
\begin{equation}\label{eq:residual-comp00}
\etameshs(E;w,\data) \leq { \mu} \ \etamesh(E; v,\data) + { c_{er,1}} \, S^{1/2}_{\mesh(E)}(v,v) + { c_{er,2}} \, |v-w|_{1,\mesh(E)} \,.
\end{equation}
\end{proposition}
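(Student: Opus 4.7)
The plan is to insert the intermediate quantity $\etameshs(E;v,\data)$ by the triangle inequality and then apply the two preceding lemmas in succession. Using the identification $v \mapsto v_* \in \Vmeshs$ introduced before Lemma \ref{prop:v-v comparison} (which leaves $v$ unchanged on the skeleton $\edge$), write
$$
\etameshs(E;w,\data) \le \etameshs(E;v,\data) + \bigl|\etameshs(E;w,\data) - \etameshs(E;v,\data)\bigr| \,.
$$
The first summand is directly controlled by Lemma \ref{prop:v-v comparison}, which yields
$$
\etameshs(E;v,\data) \le \mu \, \etamesh(E;v,\data) + c_{er,1}\, S^{1/2}_{\mesh(E)}(v,v) \,,
$$
and produces the first two contributions of the target estimate \eqref{eq:residual-comp00}.

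For the difference term, I would exploit the splitting $\etameshs^2(E;\cdot,\data) = \etameshs^2(E_1;\cdot,\data) + \etameshs^2(E_2;\cdot,\data)$ together with the reverse triangle inequality for the Euclidean norm of nonnegative vectors, giving
$$
\bigl|\etameshs(E;w,\data) - \etameshs(E;v,\data)\bigr| \le \Bigl( \sum_{i=1}^2 \bigl(\etameshs(E_i;w,\data) - \etameshs(E_i;v,\data)\bigr)^2 \Bigr)^{1/2} \,.
$$
Lemma \ref{prop:v-w comparison}, applied on the mesh $\meshs$ to each child $E_i$, then bounds each summand by $c_{er,2} \, |v-w|_{1,\meshs(E_i)}$.

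It remains to absorb the broken seminorm on the refined patches $\meshs(E_i)$ into the one on the coarser patch $\mesh(E)$. The key geometric observation is that any element $E' \in \meshs(E_i)$ is either the sibling $E_j$ (hence contained in $E \in \mesh(E)$) or a descendant in $\meshs$ of a neighbor of $E$ in $\mesh$, which itself lies in $\mesh(E)$. Consequently
$$
\bigcup_{i=1}^2 \bigcup_{E' \in \meshs(E_i)} E' \;\subseteq\; \bigcup_{E' \in \mesh(E)} E' \,,
$$
and, since both $v$ and $w$ are piecewise $H^1$ on $\meshs$, the broken seminorms satisfy $\sum_{i=1}^2 |v-w|_{1,\meshs(E_i)}^2 \lesssim |v-w|_{1,\mesh(E)}^2$. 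Combining these estimates and relabelling the constant $c_{er,2}$ yields \eqref{eq:residual-comp00}.

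The only nontrivial step is the patch comparison in the last paragraph, which is a routine consequence of the locality of NVB refinement; everything else is a direct concatenation of Lemmas \ref{prop:v-v comparison} and \ref{prop:v-w comparison} via the triangle inequality.
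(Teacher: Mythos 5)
Your proposal is correct and takes essentially the same approach as the paper: the estimate is obtained by concatenating Lemma \ref{prop:v-v comparison} (local estimator reduction) and Lemma \ref{prop:v-w comparison} (Lipschitz continuity, applied on $\meshs$), together with the observation that the refined patches are covered by $\mesh(E)$. The only difference is presentational: you insert $\etameshs(E;v,\data)$ and treat the difference term explicitly via a reverse triangle inequality over the two children, whereas the paper applies Lipschitz continuity on $\meshs$ first and then the estimator reduction, compressing the same reasoning into two lines.
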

\begin{proof}
Employing Lemma \ref{prop:v-w comparison} with $\mesh$ replaced by $\meshs$, we get 
$$
\etameshs(E;w,\data) \leq \etameshs(E;v,\data) +{ c_{er,2}} \, |v-w|_{1,\meshs(E)} \,.
$$
Finally, noting that $\meshs(E)$ is a refinement of  $\mesh(E)$, we conclude using Lemma \ref{prop:v-v comparison}. 
\end{proof}

\subsection{Convergence of AVEM}\label{S:convergence}
In this section we aim at proving Theorem \ref{T:contraction} (contraction property of {\tt REFINE}). To do so, since we first need to quantify the estimator and error reduction under refinement, we divide our argument into three steps.

\subsubsection*{a. Estimator reduction}
We first study the effect of mesh refinement on the estimator.

\begin{proposition}[estimator reduction]\label{prop: reduction:2}
Let $\mathcal{M}\subset \mesh$ be the  set of marked elements in {\tt{MARK}}, relative to the Galerkin solution $\umesh \in\Vmesh$, and let $\meshs$ be the refinement produced by {\tt REFINE}. Then, there exist constants $\rho<1$ and $C_{er,1}, C_{er,2}>0$ independent of $\mesh$ such that for all $w \in \Vmeshs$ one has 
\begin{equation}\label{estimator_reduction}
\etameshs^2(\meshs;w,\data) \leq \rho \ \etamesh^2(\mesh; \umesh,\data) + C_{er,1} \, S_{\mesh}(\umesh,\umesh) + C_{er,2} \, |\umesh-w|^2_{1,\Omega} \,.
\end{equation}
\end{proposition}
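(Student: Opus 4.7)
The plan is to follow the by-now standard AFEM-style estimator reduction argument, suitably adapted to the VEM context by invoking Proposition~\ref{prop: reduction} on refined elements.

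First, I would partition $\mesh$ into the set $\refined$ of elements actually bisected by {\tt REFINE} (note $\mathcal{M}\subseteq \refined$ by completion) and the complement $\mesh \setminus \refined$ of unrefined elements, which remain members of $\meshs$. Correspondingly, I split
\[
\etameshs^2(\meshs;w,\data)
= \sum_{E \in \refined} \etameshs^2(E;w,\data)
+ \sum_{E \in \mesh \setminus \refined} \etameshs^2(E;w,\data).
\]
On each refined $E \in \refined$ Proposition~\ref{prop: reduction} with $v=\umesh$ gives the contractive pointwise bound
\[
\etameshs(E;w,\data) \leq \mu\, \etamesh(E;\umesh,\data) + c_{er,1} S^{1/2}_{\mesh(E)}(\umesh,\umesh) + c_{er,2}\,|\umesh-w|_{1,\mesh(E)}.
\]
On each unrefined $E$, the residual part of the local estimator is unchanged except for the substitution of $w$ for $\umesh$, while the jump part can only differ on edges shared with refined neighbours; applying the same arguments used to bound the term $T_2$ in the proof of Lemma~\ref{prop:v-v comparison} (projection minimality together with \eqref{eq:stab-sE}--\eqref{eq:def-SE}) together with Lemma~\ref{prop:v-w comparison}, I would obtain the analogous but non-contractive inequality
\[
\etameshs(E;w,\data) \leq \etamesh(E;\umesh,\data) + c_{er,1} S^{1/2}_{\mesh(E)}(\umesh,\umesh) + c_{er,2}\,|\umesh-w|_{1,\mesh(E)}.
\]

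Next, for any $\delta>0$, Young's inequality $(a+b+c)^2 \leq (1+\delta)a^2 + C(\delta)(b^2+c^2)$ applied to the two bounds yields, uniformly in $E\in\mesh$,
\[
\etameshs^2(E;w,\data) \leq (1+\delta)\kappa_E^2\, \etamesh^2(E;\umesh,\data) + C(\delta)\Big( S_{\mesh(E)}(\umesh,\umesh) + |\umesh-w|^2_{1,\mesh(E)} \Big),
\]
where $\kappa_E = \mu$ if $E \in \refined$ and $\kappa_E = 1$ otherwise. Summing over $E\in\mesh$ and exploiting the finite overlap of the patches $\{\mesh(E)\}_{E\in\mesh}$ (controlled by shape regularity and $\Lambda$) turns the two rightmost summands into $S_\mesh(\umesh,\umesh)$ and $|\umesh-w|^2_{1,\Omega}$ up to a multiplicative constant. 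The key step is then to observe that
\[
\mu^2 \!\!\sum_{E\in\refined}\!\etamesh^2(E;\umesh,\data) + \!\!\sum_{E\notin\refined}\!\etamesh^2(E;\umesh,\data) = \etamesh^2(\mesh;\umesh,\data) - (1-\mu^2)\!\!\sum_{E\in\refined}\!\etamesh^2(E;\umesh,\data),
\]
and use $\mathcal{M} \subseteq \refined$ together with the D\"orfler property \eqref{eq:dorfler} to bound $\sum_{E\in\refined}\etamesh^2(E;\umesh,\data)\geq \theta\, \etamesh^2(\mesh;\umesh,\data)$. This gives
\[
\sum_{E\in\mesh} (1+\delta)\kappa_E^2\,\etamesh^2(E;\umesh,\data) \leq (1+\delta)\big(1 - (1-\mu^2)\theta\big)\, \etamesh^2(\mesh;\umesh,\data),
\]
and finally choosing $\delta>0$ small enough so that $\rho := (1+\delta)(1-(1-\mu^2)\theta) < 1$ yields the claim with suitable $C_{er,1},C_{er,2}$ absorbing $C(\delta)$ and the overlap constant.

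The main obstacle is the unrefined-element bound: unlike in the pure FEM setting, a function $w \in \Vmeshs$ restricted to an unrefined $E\in\mesh$ need not coincide with $\umesh|_E$ even when $w = \umesh$ on $\edge$, and the jump term on edges of $E$ genuinely changes whenever a neighbour is bisected. Handling this cleanly requires recycling the $T_2$-type argument from the proof of Lemma~\ref{prop:v-v comparison} together with the stability \eqref{eq:stab-sE} of $\sE$, to absorb the jump-difference contribution into $S_{\mesh(E)}(\umesh,\umesh)$; everything else is bookkeeping.
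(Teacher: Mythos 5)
Your proof matches the paper's argument essentially step for step: split $\mesh$ into the refined set $\refined$ (with $\mathcal{M}\subseteq\refined$) and its complement; use Proposition~\ref{prop: reduction} on refined elements and Lemma~\ref{prop:v-w comparison} on unrefined ones; apply Young's inequality with a free parameter $\delta$; sum, absorb patch contributions via finite overlap; invoke D\"orfler marking; then tune $\delta$ so that $\rho=(1+\delta)\bigl(1-\theta(1-\mu^2)\bigr)<1$. One small credit to you: on unrefined elements $E$ whose neighbours are bisected, the jump residual $\jmesh$ genuinely changes (the neighbouring projectors $\Pi^\nabla$ become those of the children), and the paper's proof passes directly from $\etameshs(E;w,\data)$ to $\etamesh(E;v,\data)$ via Lipschitz continuity without spelling this out, whereas you explicitly flag it and propose to recycle the $T_2$-type argument of Lemma~\ref{prop:v-v comparison} to absorb the jump discrepancy into $S_{\mesh(E)}(\umesh,\umesh)$; this is a legitimate and arguably cleaner way to close that corner of the argument, and it still lands you on exactly the same bound with $\kappa_E=1$ for unrefined elements.
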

\begin{proof}
For simplicity of notation, let us set $v=\umesh$ and let $\mathcal{R}=\mathcal{R}_{\mesh\to\meshs}$ be the set of all elements of $\mesh$ that are refined by {\tt{REFINE}} to obtain $\meshs$. If $E\in\mathcal{R}$, then Proposition \ref{prop: reduction} (estimator reduction property on refined elements) yields for all $0<\delta\le1$
\begin{equation*}
\etameshs^2(E;w,\data) \leq (1+\delta){\mu^2} \ \etamesh^2(E; v,\data) + { 2\left(1+\frac{1}{\delta}\right) \left(c_{er,1}^2 \, S_{\mesh(E)}(v,v) + c_{er,2}^2 \, |v-w|^2_{1,\mesh(E)} \right)\,.}
\end{equation*}
If $E\not\in \mathcal{R}$, then Lemma \ref{prop:v-w comparison} (Lipschitz continuity of estimator) implies 
\begin{equation*}
  \etameshs^2(E;w,\data) \leq (1+\delta) \ \etamesh^2(E; v,\data) + \left(1+\frac{1}{\delta}\right) {c_{er,2}^2} \, |v-w|^2_{1,\mesh(E)}.
\end{equation*}
Hence, adding the two inequalities, { there exist positive constants $C_{er,1}=C_{er,1}(\delta)$ and $C_{er,2}=C_{er,2}(\delta)$} such that
\begin{equation*}
\etameshs^2(\meshs;w,\data) \leq (1+\delta) \ \etamesh^2(\mesh; v,\data) - 
(1+\delta)(1-{\mu^2}) \ \etamesh^2(\mathcal{R}; v,\data)+ C_{er,1} \, S_{\mesh}(v,v)+
C_{er,2} \, |v-w|^2_{1,\Omega}.
\end{equation*}
Since $\etamesh^2(\mathcal{R}; v,\data) \geq \etamesh^2(\mathcal{M}; v,\data)$ due to the property $\mathcal{M} \subseteq \mathcal{R}$, making use of \eqref{eq:dorfler} yields
\begin{equation}
 \etamesh^2(\mesh; v,\data) -
(1-{\mu^2})\etamesh^2(\mathcal{R}; v,\data)\leq \big(1-\theta(1-{\mu^2})\big)\etamesh^2(\mesh; v,\data).
\end{equation}
Choosing $\delta>0$ sufficiently small so that 
$$ 
(1+\delta)(1-\theta(1-{\mu^2}))=1-\frac{\theta(1-{\mu^2})}{2},
$$
and setting $\rho := 1-\frac{\theta(1-{\mu^2})}{2} < 1$ concludes the proof.
\end{proof}

\subsubsection*{b. Comparison of errors under refinement}
Let $\umesh \in \Vmesh$ be again the solution of Problem \eqref{def-Galerkin}, and let $\umeshs \in \Vmeshs$ be the solution of the analogous problem on the refined mesh $\meshs$. We aim at comparing $\vvvert u - \umesh \vvvert$ with $\vvvert u - \umeshs \vvvert$.
\begin{lemma}[lack of orthogonality]
There exists a constant $C_D>0$ independent of $\mesh$ and $\meshs$ such that for all $\epsilon>0$
\begin{equation}\label{lack:orthog}
\begin{aligned}
  \big\vert \mathcal{B}(u-\umeshs,\umeshs-\umesh) \big\vert & \leq \epsilon \vert u-\umeshs\vert_{1,\Omega}^2 + \epsilon \vert \umesh-\umeshs\vert^2_{1,\Omega}
  \\
  & \quad +C_D\left(1+\frac{1}{\epsilon}\right) \big(S_{\meshs}(\umeshs,\umeshs)+S_{\mesh}(\umesh,\umesh) \big) \,.
\end{aligned}
\end{equation}
\end{lemma}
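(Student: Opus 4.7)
The plan is to split the test function $\umeshs-\umesh \in \Vmeshs$ into a conforming piecewise-affine part, on which the Galerkin quasi-orthogonality \eqref{aux:PGO} is available on $\meshs$, and two remainder terms, each of which is controlled by a stabilization seminorm on its \emph{native} mesh via Proposition \ref{prop:compareInterp} and \eqref{eq:stab-norm}. A naive decomposition using $\Imeshsz(\umeshs-\umesh)$ would produce the term $\Smeshs(\umesh,\umesh)$, which is not directly comparable with the desired $\Smesh(\umesh,\umesh)$; to avoid this, I exploit the inclusion $\Vmeshz \subset \Vmeshsz$ to interpolate $\umesh$ with the \emph{coarser} operator $\Imeshz$.

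\textbf{Step 1: Decomposition.} I would write
\begin{equation*}
\umeshs-\umesh = w_0 + w_1 + w_2,\qquad
w_0 := \Imeshsz \umeshs - \Imeshz \umesh,\quad
w_1 := \umeshs - \Imeshsz \umeshs,\quad
w_2 := -(\umesh - \Imeshz \umesh),
\end{equation*}
noting that $\Imeshz \umesh \in \Vmeshz \subset \Vmeshsz$, whence $w_0 \in \Vmeshsz$.

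\textbf{Step 2: The conforming part $w_0$.} Since $w_0 \in \Vmeshsz$, the version of \eqref{aux:PGO} on the refined mesh $\meshs$ yields
\begin{equation*}
\big|\cB(u-\umeshs,w_0)\big| \lesssim \Smeshs(\umeshs,\umeshs)^{1/2}\,|w_0|_{1,\Omega},
\end{equation*}
and I would bound $|w_0|_{1,\Omega} \le |\umeshs-\umesh|_{1,\Omega}+|w_1|_{1,\Omega}+|w_2|_{1,\Omega}$ by the triangle inequality.

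\textbf{Step 3: The remainder parts $w_1,w_2$.} Applying Proposition \ref{prop:compareInterp} to $\umeshs \in \Vmeshs$ and to $\umesh \in \Vmesh$, respectively, together with \eqref{eq:stab-norm}, I would get
\begin{equation*}
|w_1|_{1,\Omega} \lesssim |\umeshs - \Imeshs \umeshs|_{1,\meshs} \simeq \Smeshs(\umeshs,\umeshs)^{1/2},
\qquad
|w_2|_{1,\Omega} \lesssim |\umesh - \Imesh \umesh|_{1,\mesh} \simeq \Smesh(\umesh,\umesh)^{1/2}.
\end{equation*}
Continuity of $\cB$ from \eqref{norm:equiv} then gives $|\cB(u-\umeshs,w_i)| \le B\,|u-\umeshs|_{1,\Omega}\,|w_i|_{1,\Omega}$ for $i=1,2$. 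Finally, applying Young's inequality with parameter $\epsilon$ term by term absorbs $|u-\umeshs|_{1,\Omega}^2$ and $|\umesh-\umeshs|_{1,\Omega}^2$ on the right and collects the remaining pieces into $C_D(1+1/\epsilon)\bigl(\Smeshs(\umeshs,\umeshs)+\Smesh(\umesh,\umesh)\bigr)$, which is the claimed bound.

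\textbf{Main obstacle.} The crux is the splitting itself: the symmetric choice $\Imeshsz(\umeshs-\umesh)$ is the intuitive one but couples the $\umesh$-remainder to $\Smeshs$ rather than $\Smesh$. The asymmetric triple decomposition above is what allows Proposition \ref{prop:compareInterp} to be used on the \emph{correct} mesh for each Galerkin solution, while still landing the conforming residual $w_0$ inside $\Vmeshsz$ so that the quasi-orthogonality estimate \eqref{aux:PGO} (with $\meshs$ in place of $\mesh$) can be invoked. All remaining manipulations are applications of Young's inequality, continuity of $\cB$, and \eqref{eq:stab-norm}.
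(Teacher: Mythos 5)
Your proof is correct and follows essentially the same route as the paper: the paper writes $\mathcal{B}(u-\umeshs,\umeshs-\umesh)=I+II+III$ with $I=\mathcal{B}(u-\umeshs,\umeshs-\Imeshsz\umeshs)$, $II=-\mathcal{B}(u-\umeshs,\umesh-\Imeshz\umesh)$, $III=\mathcal{B}(u-\umeshs,\Imeshsz\umeshs-\Imeshz\umesh)$, which is exactly your $w_1,w_2,w_0$ splitting, and then treats $III$ via \eqref{aux:PGO} on $\meshs$ (using $\Vmeshz\subset\Vmeshsz$) and $I,II$ via Proposition \ref{prop:compareInterp} and \eqref{eq:stab-norm} followed by Young's inequality. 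The ``asymmetric'' use of $\Imeshz$ on $\umesh$ that you highlight as the crux is indeed the same device the paper uses.
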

\begin{proof}
Write $\mathcal{B}(u-\umeshs,\umeshs-\umesh) = I + II +III$ with
\begin{equation*}
  I := \mathcal{B}(u-\umeshs,\umeshs-I^0_{\meshs} \umeshs),
  \quad
  II := - \mathcal{B}(u-\umeshs,\umesh-I^0_{\mesh} \umesh),
  \quad
  III :=  \mathcal{B}(u-\umeshs,I^0_{\meshs}\umeshs-I^0_{\mesh} \umesh).
\end{equation*}
We recall the crucial estimate, stemming from Proposition \ref{prop:compareInterp} and eq. \eqref{eq:stab-norm},
\begin{equation}
\vert v - \mathcal{I}^0_\mesh v \vert_{1,\Omega} \lesssim
\vert v - \Imesh v \vert_{1,\mesh}\lesssim S_\mesh(v,v) \qquad \forall v\in \Vmesh.
\end{equation}
Invoking this estimate twice, and employing Young's inequality with $\epsilon>0$, we obtain 
\begin{eqnarray}
&& \vert I \vert \lesssim \vert u -\umeshs\vert_{1,\Omega} \ S_\meshs^{1/2}(\umeshs,\umeshs)
{\leq
\epsilon \vert u -\umeshs\vert^2_{1,\Omega} + \frac{C}{\epsilon}  S_\meshs(\umeshs,\umeshs)} \nonumber\\
&& \vert II \vert \lesssim \vert u-\umeshs\vert_{1,\Omega} \ S_\mesh^{1/2}(\umesh,\umesh) 
\leq \epsilon \vert u -\umeshs\vert^2_{1,\Omega} + \frac{C}{\epsilon}  S_\mesh(\umesh,\umesh).
\end{eqnarray}
For the term $III$, we observe that $I^0_{\meshs}\umeshs-I^0_{\mesh} \umesh\in \Vmeshs^0$ in view of 
$\Vmesh^0\subset \Vmeshs^0$,  which  allows us to apply Lemma \ref{L:Galerkin-orthogonality} (Galerkin quasi-orthogonality) with $\mesh$ replaced by  $\meshs$. Thus, $III$ is zero in the enhanced case \eqref{vem:choice:2} due to \eqref{eq:PGO}. In the other cases,
 bound \eqref{aux:PGO} yields 
\[
\vert III\vert \lesssim S_\meshs(\umeshs,\umeshs)^{1/2} \vert I^0_\meshs \umeshs-
I^0_\mesh \umesh \vert_{1,\Omega}. 
\]
Furthermore, 
\begin{eqnarray}
 \vert I^0_\meshs \umeshs - I^0_\mesh \umesh\vert_{1,\Omega} &\leq&
\vert I^0_\meshs \umeshs - \umeshs\vert_{1,\Omega}+
\vert I^0_\mesh \umesh - \umesh \vert_{1,\Omega}
+
\vert \umeshs -  \umesh\vert_{1,\Omega}\nonumber\\
&\lesssim& S_\meshs(\umeshs,\umeshs)^{1/2}
+S_\mesh(\umesh,\umesh)^{1/2} + \vert \umeshs-\umesh\vert_{1,\Omega},\nonumber
 \end{eqnarray}
 whence using again Young's inequality
 \[
 \vert III \vert \leq \epsilon \vert \umeshs-\umesh\vert^2_{1,\Omega} + {C(1+\frac{1}{\epsilon})} (S_\meshs(\umeshs,\umeshs)+S_\mesh(\umesh,\umesh)). 
 \]
 This completes the proof.
 \end{proof}

\begin{proposition}[comparison of energy errors under refinement]\label{prop:compar-errors}
For any $\delta >0$ there exists a constant $C_E>0$ independent of $\mesh$ such that 
\begin{equation}\label{eq:compar-errors}
  \vvvert u - \umeshs \vvvert^2  \leq  (1+\delta) \vvvert u - \umesh \vvvert^2 {- \vvvert \umeshs - \umesh \vvvert^2}
  + { C_E}\left(1+\frac1\delta \right)  \big( \Smesh(\umesh, \umesh) + \Smeshs(\umeshs, \umeshs) \big).
\end{equation}
\end{proposition}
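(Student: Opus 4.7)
The plan is to start with the algebraic identity obtained by expanding $\vvvert (u - \umeshs) + (\umeshs - \umesh) \vvvert^2$ via bilinearity and symmetry of $\cB$, namely
\begin{equation*}
\vvvert u - \umeshs \vvvert^2 = \vvvert u - \umesh \vvvert^2 - \vvvert \umeshs - \umesh \vvvert^2 - 2\cB(u-\umeshs,\umeshs-\umesh).
\end{equation*}
The main task is then to bound the cross term on the right hand side. This is exactly the quantity controlled by the preceding \emph{lack of orthogonality} estimate \eqref{lack:orthog}: bounding the cross term by its absolute value and applying \eqref{lack:orthog} with a parameter $\epsilon'>0$ to be chosen,
\begin{equation*}
2|\cB(u-\umeshs,\umeshs-\umesh)| \le 2\epsilon'\bigl(|u-\umeshs|_{1,\Omega}^2 + |\umesh-\umeshs|_{1,\Omega}^2\bigr) + 2C_D\Bigl(1+\tfrac{1}{\epsilon'}\Bigr)\bigl(S_\meshs(\umeshs,\umeshs)+S_\mesh(\umesh,\umesh)\bigr).
\end{equation*}

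Next I would translate the $H^1$-seminorms into the energy norm using \eqref{norm:equiv}, picking up a factor $1/c_\cB$, and set $\tilde\delta := 2\epsilon'/c_\cB$. Substituting into the identity and grouping terms yields
\begin{equation*}
(1-\tilde\delta)\vvvert u - \umeshs \vvvert^2 \le \vvvert u - \umesh \vvvert^2 - (1-\tilde\delta)\vvvert \umeshs - \umesh \vvvert^2 + 2C_D\Bigl(1+\tfrac{1}{\epsilon'}\Bigr)\bigl(S_\meshs(\umeshs,\umeshs)+S_\mesh(\umesh,\umesh)\bigr).
\end{equation*}
Note that the $(1-\tilde\delta)$ factor appears on both the left hand side and on the $\vvvert \umeshs-\umesh\vvvert^2$ term, which is crucial for obtaining the clean coefficient $-1$ required in the statement.

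Finally I would divide through by $(1-\tilde\delta)$ and tune $\tilde\delta$ so that $\frac{1}{1-\tilde\delta} = 1+\delta$, that is, $\tilde\delta = \delta/(1+\delta)$. Since $1/\tilde\delta = 1 + 1/\delta$ and $1/(1-\tilde\delta) = 1+\delta$, the factor multiplying the stabilization terms becomes $C(1+\delta)(1+1/\delta) \le C'(1+1/\delta)$ for all $\delta \in (0,\delta_0]$ bounded, and can be absorbed into the constant $C_E$ (which depends on $c_\cB$ and $C_D$ but not on $\mesh$). I do not anticipate a serious obstacle: the key structural input — namely the quasi-orthogonality estimate \eqref{lack:orthog} — has already been established, and the rest is bookkeeping with Young's inequality and the norm equivalence. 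The only delicate point is keeping the coefficient of $\vvvert \umeshs - \umesh \vvvert^2$ at exactly $-1$ after the absorption step, which is why the estimate in \eqref{lack:orthog} was stated symmetrically in $|u-\umeshs|_{1,\Omega}^2$ and $|\umesh-\umeshs|_{1,\Omega}^2$ with the same parameter $\epsilon$.
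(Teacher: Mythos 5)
Your proposal is correct and follows essentially the same route as the paper's proof: the same algebraic expansion of the energy norm, the same invocation of the lack-of-orthogonality estimate \eqref{lack:orthog}, translation of $H^1$-seminorms to the energy norm via \eqref{norm:equiv}, and the same final step of dividing by $(1-\tilde\delta)$ and choosing $\tilde\delta = \delta/(1+\delta)$ (the paper writes this as $\epsilon = \delta/(K(1+\delta))$ with $K = 2c_\cB^{-1}$, which is the identical choice). Your observation that the $(1-\tilde\delta)$ factor appears on both $\vvvert u-\umeshs\vvvert^2$ and $\vvvert\umeshs-\umesh\vvvert^2$, so that the latter's coefficient becomes exactly $-1$ after division, is precisely the point the paper exploits, and your remark about restricting to bounded $\delta$ to keep $C_E$ universal matches how the result is actually used (Corollary \ref{coroll:quasi-orth} takes $\delta < 1/4$).
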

\begin{proof}
We first observe that
\begin{equation*}\label{aux:err-reduction}
\mathcal{B}(u-\umeshs,u-\umeshs)=
\mathcal{B}(u-\umesh,u-\umesh)
-\mathcal{B}(\umesh-\umeshs,\umesh-\umeshs)
+2\mathcal{B}(u-\umeshs,\umesh-\umeshs) \,.
\end{equation*}
and that \eqref{lack:orthog} gives an estimate for the last term. This, in conjunction with \eqref{norm:equiv}, yields for all $\epsilon>0$
\begin{align*}
\vvvert u -\umeshs\vvvert^2&= \vvvert u-\umesh\vvvert^2 - \vvvert \umeshs-\umesh \vvvert^2+
2 \mathcal{B}(u-\umeshs,\umesh-\umeshs)
\\
&\leq  \vvvert u-\umesh\vvvert^2 + K \epsilon\vvvert u-\umeshs \vvvert^2
-(1-K\epsilon)\vvvert \umesh-\umeshs \vvvert^2
\\
& \quad +2C_D\left(1+\frac{1}{\epsilon}\right)S_\meshs(\umeshs,\umeshs)+2C_D\left(1+\frac{1}{\epsilon}\right)S_\mesh(\umesh,\umesh) \,,
\end{align*}
with $K=2 c_\cB^{-1}$, whence
\begin{align*}
\vvvert u -\umeshs\vvvert^2 \leq
{\frac{1}{1-K\epsilon}}\vvvert u-\umesh\vvvert^2
-\vvvert \umesh-\umeshs\vvvert^2
+ \ \frac{2C_D}{1-K\epsilon}\Big(1+\frac{1}{\epsilon}\Big)\big(S_\meshs(\umeshs,\umeshs)+
S_\mesh(\umesh,\umesh)\big).
\end{align*}
For any $\delta>0$, let us define $\epsilon>0$ so that
$$ 
\frac{1}{1-K\epsilon} =1+\delta
$$
namely $\epsilon=\frac{\delta}{K(1+\delta)}$. Inserting this into the previous estimate implies \eqref{eq:compar-errors} for a suitable choice of the constant $C_E$.
\end{proof}

Let us now prove a simple consequence of the above result that exploits Proposition \ref{prop:bound-ST} (bound of the stabilization term by the residual).
\begin{corollary}[quasi-orthogonality of energy errors without stabilization]\label{coroll:quasi-orth}
Let $\gamma$ be the stabilization parameter in \eqref{eq:def-BT}.
Given any $\delta \in (0,\frac14)$, there exists $\gamma_\delta>0$ such that for any $\gamma \geq \gamma_\delta$ it holds
\begin{equation}\label{eq:compar-errors:no-stab}
\begin{split}
\vvvert u - \umeshs \vvvert^2  & \leq  (1+4\delta) \vvvert u - \umesh \vvvert^2 -  \vvvert \umeshs - \umesh \vvvert^2. 
\end{split}
\end{equation}
\end{corollary}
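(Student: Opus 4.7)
The plan is to start from Proposition \ref{prop:compar-errors}, applied with the given $\delta$, and absorb the two stabilization contributions $S_\mesh(\umesh,\umesh)$ and $S_\meshs(\umeshs,\umeshs)$ appearing on the right-hand side of \eqref{eq:compar-errors} into the energy error terms by requiring $\gamma$ large enough. The key tool is Proposition \ref{prop:bound-ST}, applied separately on $\mesh$ and $\meshs$, which gives
\[
S_\mesh(\umesh,\umesh)\leq \frac{C_B}{\gamma^2}\etamesh^2(\umesh,\data),\qquad S_\meshs(\umeshs,\umeshs)\leq \frac{C_B}{\gamma^2}\etameshs^2(\umeshs,\data).
\]
Assuming that $\gamma^2 > C_B/c_\text{apost}$, the stabilization-free lower bound \eqref{apost:stab-free} of Corollary \ref{Corollary:stab-free} (valid under exactly this condition on $\gamma$) then allows us to bound each $\eta^2$ in turn by $C_L^{-1}|u-u_\bullet|_{1,\Omega}^2$, which via the norm equivalence \eqref{norm:equiv} is controlled by $(c_\cB C_L)^{-1}\vvvert u-u_\bullet\vvvert^2$. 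Combining these steps yields the fundamental intermediate inequality
\[
S_\mesh(\umesh,\umesh)+S_\meshs(\umeshs,\umeshs)\leq \frac{C}{\gamma^2}\bigl(\vvvert u-\umesh\vvvert^2+\vvvert u-\umeshs\vvvert^2\bigr),
\]
for an explicit constant $C=C(C_B,C_L,c_\cB)$.

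Next I would substitute this bound into \eqref{eq:compar-errors}, bringing the $\vvvert u-\umeshs\vvvert^2$ contribution to the left-hand side. Introducing $\tilde\kappa=\tilde\kappa(\gamma,\delta):=C_E\bigl(1+\tfrac1\delta\bigr)C/\gamma^2$, this produces an estimate of the form
\[
(1-\tilde\kappa)\vvvert u-\umeshs\vvvert^2\leq (1+\delta+\tilde\kappa)\vvvert u-\umesh\vvvert^2-\vvvert\umeshs-\umesh\vvvert^2.
\]
For $\gamma$ large enough we have $\tilde\kappa<1$, so dividing by $1-\tilde\kappa>0$ preserves the inequality; since $1/(1-\tilde\kappa)\geq 1$, the coefficient of $-\vvvert\umeshs-\umesh\vvvert^2$ only becomes more favorable and can be relaxed back to $-1$ without loss.

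The conclusion then reduces to an elementary algebraic check: $(1+\delta+\tilde\kappa)/(1-\tilde\kappa)\leq 1+4\delta$ holds precisely when $\tilde\kappa\leq 3\delta/(2+4\delta)$. Combining this with the requirement $\gamma^2>C_B/c_\text{apost}$ needed for the stabilization-free lower bound, one defines $\gamma_\delta$ as the (explicit) maximum of the two resulting thresholds, and \eqref{eq:compar-errors:no-stab} follows. The main subtlety, and the step I would single out as the only delicate point, is that the constant multiplying the stabilization terms in \eqref{eq:compar-errors} scales like $1+1/\delta$; consequently $\tilde\kappa$ carries a factor $1+1/\delta$ and $\gamma_\delta$ unavoidably blows up as $\delta\to 0$. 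This dependence is harmless since $\delta\in(0,\tfrac14)$ is fixed once and for all when invoking the corollary, but it must be tracked carefully to produce an honest threshold $\gamma_\delta$.
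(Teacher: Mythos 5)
Your proof is correct and follows essentially the same route as the paper: you apply Proposition~\ref{prop:bound-ST} together with the stabilization-free lower bound of Corollary~\ref{Corollary:stab-free} and the norm equivalence \eqref{norm:equiv} to dominate $S_\mesh(\umesh,\umesh)$ and $S_\meshs(\umeshs,\umeshs)$ by $C\gamma^{-2}\vvvert u-\umesh\vvvert^2$ and $C\gamma^{-2}\vvvert u-\umeshs\vvvert^2$ respectively, substitute into Proposition~\ref{prop:compar-errors}, move the $\umeshs$-term to the left, divide, and perform the final algebraic check; the paper does exactly this, the only cosmetic difference being that the paper imposes the slightly simpler (and slightly stronger) threshold $D/\gamma^2\leq\delta$ rather than your sharper $\tilde\kappa\leq 3\delta/(2+4\delta)$, both of which yield $\frac{1+\delta+\tilde\kappa}{1-\tilde\kappa}\leq 1+4\delta$ for $\delta\in(0,\tfrac14)$. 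Your closing remark about the $1+1/\delta$ factor forcing $\gamma_\delta\to\infty$ as $\delta\to 0$ is accurate and matches the paper's explicit $\gamma_\delta^2=\frac{C_EC_B}{C_Lc_\cB}\frac{1+\delta}{\delta^2}$.
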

\begin{proof}
{ Let $e=\vvvert u-\umesh\vvvert$, $e_*=\vvvert u-\umeshs\vvvert$, $S=S_\mesh(\umesh,\umesh)$, $S_*=S_\meshs(\umeshs,\umeshs)$
and $E=\vvvert \umesh-\umeshs\vvvert$.
We combine \eqref{eq:bound-ST} with \eqref{apost:stab-free} and \eqref{norm:equiv} to deduce 
$$
S\leq \frac{C_B}{\gamma^2 C_L c_\cB} \, e^2 \qquad \text{and} \qquad  S_*\leq \frac{C_B}{\gamma^2 C_L c_\cB} \, e_*^2. 
$$
Employing these  inequalities in conjunction with \eqref{eq:compar-errors}, we get 
\begin{equation*}
e_*^2   \leq  (1+\delta) e^2 - E^2
+ C_E\left(1+\frac1\delta \right) \frac{C_B}{\gamma^2 C_L c_\cB}  ( e^2+e_*^2) \,,
\end{equation*}
which can be rewritten as 
\begin{eqnarray}\label{eq:e-delta-E}
\left(1-\frac{D}{\gamma^2}\right)e_*^2 \leq \left(1+\delta +\frac{D}{\gamma^2}\right)e^2 - E^2
\end{eqnarray}
with $D=\left(1+\frac1\delta \right) \frac{C_E C_B}{C_L c_\cB}$. 
Let us choose $\gamma$ such that 
\begin{equation}\label{eq:bound-on-gamma}
\frac{D}{\gamma^2} \leq \delta\,, \qquad \text{i.e.}, \qquad \gamma^2 \geq  \gamma_\delta^2 := \frac{C_E C_B}{C_L c_\cB} \frac{1+\delta}{\delta^2}\,.
\end{equation}
Then, \eqref{eq:e-delta-E} yields $(1-\delta)e_*^2 \leq (1+2 \delta)e^2 - E^2$,  which in turn implies
$$
e_*^2 \leq \frac{1+2 \delta}{1-\delta}e^2 - E^2 \,.
$$
We conclude the proof by observing that $\frac{1+2 \delta}{1-\delta} \leq 1+4\delta$ if $\delta \leq \frac14$.
}
\end{proof}

\subsubsection*{c. Proof of Theorem \ref{T:contraction} (contraction property of {\tt GALERKIN})}
%
To simplify notation, set $e^2= \vvvert u - \umesh \vvvert^2$, $e^2_{*}= \vvvert u - \umeshs \vvvert^2$, $E^2=\vvvert \umesh-\umeshs\vvvert$, $\eta^2=\etamesh^2(\umesh,\data)$, $\eta_{*}^2=\etameshs^2(\umeshs,\data)$ and $S=S_\mesh (\umesh,\umesh)$.
By employing \eqref{eq:compar-errors:no-stab} together with \eqref{estimator_reduction} and \eqref{norm:equiv}, we obtain 
\begin{eqnarray}
e_{*}^2+\beta \eta_{*}^2&\leq& 
(1+4\delta)e^2 
+ \Big[{\frac{\beta C_{er,2}}{c_\cB}-1}\Big]E^2
+\beta \rho \eta^2 + \beta C_{er,1} S \,. \nonumber
\end{eqnarray}
Choose $\beta={\frac{c_\cB}{C_{er,2}}}$ and recall that 
$ S\leq \frac{C_B}{\gamma^2}\eta^2 $ from \eqref{eq:bound-ST}. This implies
\begin{equation*}
e_{*}^2+\beta \eta_{*}^2 \leq \left(1+4 \delta\right)e^2 +\beta \Big(\rho+\frac{C_{er,1} C_B}{\gamma^2} \Big)\eta^2.
\end{equation*}
The coefficient  of $\eta^2$ satisfies
$$ 
\beta \Big(\rho+\frac{C_{er,1}C_B}{\gamma^2} \Big) \leq \beta \frac{1+\rho}{2}
$$
provided 
$$ \frac{C_{ er,1}C_B}{\gamma^2} \leq \frac{1-\rho}{2}.$$
Recalling the condition \eqref{eq:bound-on-gamma} on $\gamma$, which stems from the proof of Corollary \ref{coroll:quasi-orth} (quasi-orthogonality of energy errors without stabilization), we thus impose
$$
\gamma^2 \geq \max\left(\frac{2C_{er,1}C_B}{1-\rho},\frac{{ C_E} C_B}{C_L c_\cB} \frac{1+\delta}{\delta}\right).
$$ 
Therefore, we get
\begin{equation*}
e_{*}^2 + \beta \eta_{*}^2 \leq (1+4\delta)e^2 + \beta \frac{1+\rho}{2}\eta^2 = (1+4\delta)e^2 -\beta \frac{1-\rho}{4}\eta^2+
\beta \frac{3+\rho}{4}\eta^2 \,.
\end{equation*}
Rewriting the a posteriori error bound \eqref{apost:stab-free} as
$e^2 \leq {c^\cB}C_{apost}\left(1+\frac{C_B}{\gamma^2}\right)\eta^2$,
we obtain
\begin{eqnarray*}
e_{*}^2 + \beta \eta_{*}^2 &\leq & \left( (1+4\delta) - \frac{\beta(1-\rho)}{4 {c^\cB} C_{apost}(1+\frac{C_B}{\gamma^2})}\right)e^2
+\beta \frac{3+\rho}{4}\eta^2.
\end{eqnarray*}
We finally choose $\delta$. { Let us assume that $\gamma\geq 1$ and} let us pick $\delta$ satisfying 
$$ 
\delta\leq \frac{\beta(1-\rho)}{20{c^\cB}C_{apost}(1+C_B)}\leq \frac{\beta(1-\rho)}{20 { c^\cB} C_{apost}(1+\frac{C_B}{\gamma^2})} \,,
$$
which implies
\begin{equation}
e_{*}^2 + \beta \eta_{*}^2\leq (1-\delta)e^2 + \frac{3+\rho}{4}\beta \eta^2 \leq \alpha(e^2 + \beta \eta^2)
\end{equation}
provided 
$$
\alpha=\max (1-\delta,\frac{3+\rho}{4})<1.
$$
We eventually realize that the final choice of parameters is
\begin{equation*}
  \beta= \frac{c_\cB}{C_{er,2}} \,,
  \quad
  \delta= \min \left(\frac 1 4, \frac{\beta(1-\rho)}{20 c^\cB C_{apost}(1+C_B)} \right) \, ,
  \quad
 \gamma^2 \geq \max \left(1,\frac{2C_{er,1}C_B}{1-\rho},\frac{{C_E} C_B}{C_L c^\cB} \frac{1+\delta}{\delta}\right) \, ,
\end{equation*}
which is admissible. This concludes the proof of Theorem \ref{T:contraction}.

\section{Scaled Poincar\'e inequality in $\Vmesh$: proof of Proposition \ref{prop:scaledPoincare}}\label{sec:poincare}

We first introduce some useful definitions. Let $\mathbb{T}$ denote the infinite binary tree obtained by newest-vertex bisection from the initial partition $\mesh_0$. If $T \in \mathbb{T}$ is not a root, denote its parent by $A(T)$, and let ${\cal A}(T)$ the chain of its ancestors, i.e.,
$$
{\cal A}(T) = \{A_1(T)=A(T), A_j(T)=A(A_{j-1}(T)) \text{ for } j\geq 2 \text{ until the root is reached} \}.  
$$
Given an integer $m \geq 1$, let ${\cal A}_m(T)$ be the subchain containing the first $m$ ancestors of $T$. 

Given any $T \in \mesh$, with vertices $\bm{x}_1$, $\bm{x}_2$, $\bm{x}_3$, define the cumulative index of $T$ to be
$$
\lambda(T) := \sum_{i=1}^3 \lambda(\bm{x}_i) \,,
$$
where $\lambda(\bm{x}_i)$ is the global index of the node $\bm{x}_i$.

Let $v \in \Vmesh$ satisfy $v(\bm{x})=0$ for all $ \bm{x} \in {\cal P}$. We divide the proof into several steps.

\noindent {\it Step 1. Local bounds of norms.} 
Let $E \in \mesh$ be fixed.  If  one of its vertices is a proper node, we immediately have
\begin{equation}\label{eq:B1}
h_E^{-2} \Vert v \Vert_{0,E}^2 \lesssim  | v |_{1,E}^2\,.
\end{equation}
So, from now on, we assume that none of the vertices of $E$ is a proper node. Since $v$ need not vanish in $E$, we use the inequality
\begin{equation}\label{eq:B2}
h_E^{-2} \Vert v \Vert_{0,E}^2 \lesssim  \left( |v(\bm{x}_0)|^2+| v |_{1,E}^2 \right)\,,
\end{equation}
where $\bm{x}_0$ is any point in $E$. Let us choose $\bm{x}_0$ as the newest vertex of $E$. In the two previous inequalities, the hidden constants only depend on $\text{dim} \, \VE$, which by \eqref{eq:cond-VE} and Remark \ref{rem:bound-global-index} can be bounded by $3\cdot 2^\Lambda$.

\noindent {\it Step 2. Path to a proper node.} Denote by $i, j, k$ the global indices of the vertices of $E$, with $i$ being the global index of $\bm{x}_0$; by assumption, they are all $>0$.
Consider the parent $T=A(E)$ of $E$, and let $\ell \geq 0$ be the global index of the vertex of $T$ not belonging to $E$. We claim that
\begin{equation}\label{eq:B3}
\ell < i \,.
\end{equation}
To prove this, observe that $\bm{x}_0$ is the midpoint of an edge $e$ of $T$, whose endpoints have global indices $\ell$ and (say) $k$ (see Fig. \ref{fig:parent-element}). 
\begin{figure}[t!]
\begin{center}
\begin{overpic}[scale=0.15]{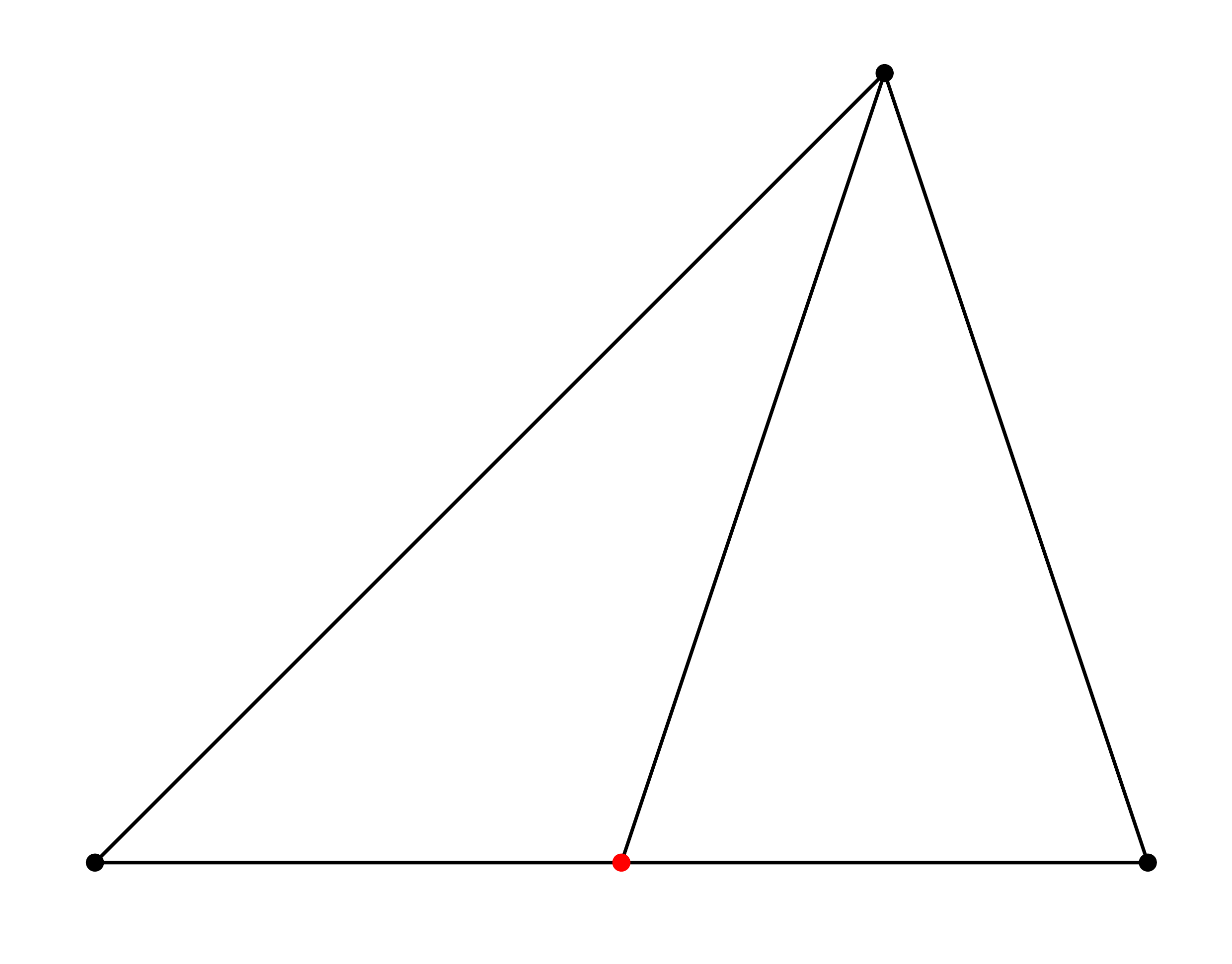}
\put(40, 28){\Large{${E}$}}
\put(40,  11){$\bm{x}_0$}
\put(60, 40){\Large{${T}$}}
\put(50,  1){$i$}
\put(75, 75){$j$}
\put(3,   1){$k$}
\put(95, 1){$\ell$}
\end{overpic}
\end{center}
\caption{{Sample of element $E \in \mesh$, where $T=A(E)$ is the parent of $E$, $i, j, k$ are the global indices of the vertices of $E$ ($i$ being the global index of $\bm{x}_0$), and $\ell$ is global index of the remaining vertex of $T$.}}
\label{fig:parent-element}
\end{figure}
By definition of global index of a hanging node, it holds $i= \max(k, \ell)+1$: if $k \leq \ell$, then $i=\ell+1$ whence $\ell =i-1< i$, whereas if $\ell < k$, then $i=k+1$ whence $\ell < i-1<i$.
Inequality \eqref{eq:B3} implies that the cumulative index decreases in going from $E$ to $T$, i.e.,
$\lambda(E) > \lambda(T)$.

By repeating the argument above with $E$ replaced by $T$, and then arguing recursively,  we realize that when we move along the chain ${\cal A}(E)$ of ancestors of $E$, the cumulative index strictly decreases by at least 1 unit each time, until it becomes $< 3$, indicating the presence of a proper node. In this way, after observing that $\lambda(E) \leq 3\Lambda$ by Definition \ref{def:Lambda-partitions}, we obtain the existence of a subchain of ancestors
\begin{equation}\label{eq:B7}
{\cal A}_M(E) = \{T_0=E, T_1, \dots, T_M\}  
\end{equation}
 with the following properties:
\begin{enumerate}
\item $T_M$ is the first element in the chain which has a proper node,  say $\bm{x}^P$, as a vertex;
\item $M < 3\Lambda$;
\item for $m>0$, each $T_m$ has an edge $g_m$ whose midpoint is the newest vertex of $T_{m-1}$;
 the path $g_1 \rightarrow g_2 \rightarrow \cdots \rightarrow g_M$  connects the node $\bm{x}_0 \in E$ (the midpoint of $g_1$) to the proper node $\bm{x}^P \in T_M$ (the endpoint of $g_M$)
(see Fig. \ref{fig:path}).
\end{enumerate}

\vspace{0.5cm}
\begin{figure}[t!]
\begin{center}
\begin{overpic}[scale=0.48]{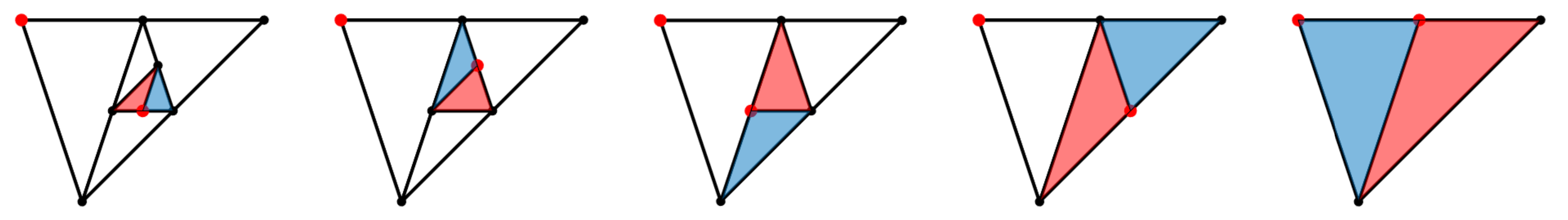}
\put( 0,  14){$\bm{x}^P$}
\put(20.5,  14){$\bm{x}^P$}
\put(41,  14){$\bm{x}^P$}
\put(61.5,  14){$\bm{x}^P$}
\put(82,  14){$\bm{x}^P$}
\put( 7,  5){$\bm{x}_0$}
\put(31, 10){$\bm{x}_1$}
\put(45,  7){$\bm{x}_2$}
\put(73,  6){$\bm{x}_3$}
\put(90,  14){$\bm{x}_4$}
\put(92,  9){$T_4$}
\put(87,  5){$T_5$}
\put(69,  6.5){$T_3$}
\put(49,  8){$T_2$}
\put(29,  6.7){$T_1$}
\put(6.5,  8){$T_0$}
\end{overpic}
\\
\vspace{0.5cm}
\begin{overpic}[scale=0.20]{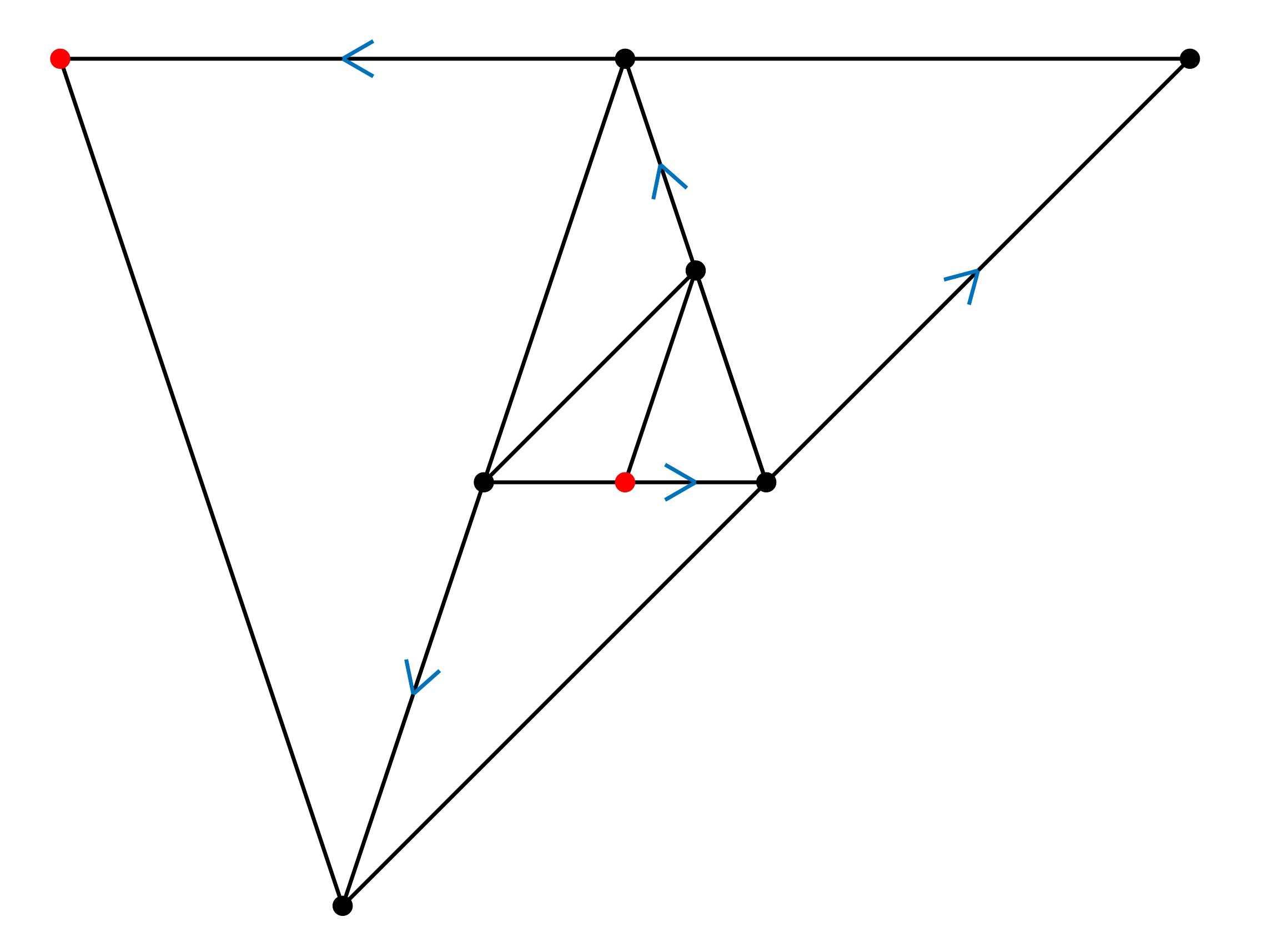}
\put(44, 42){$E$}
\put(42, 32){$\bm{x}_0$}
\put(51, 32){$g_1$}
\put(55, 60){$g_2$}
\put(25, 20){$g_3$}
\put(77, 50){$g_4$}
\put(25, 73){$g_5$}
\put(0, 73){$\bm{x}^P$}
\end{overpic}
\end{center}
\caption{{The chain of ancestors of $E=T_0$} leading to $T_5$ having a proper node as a vertex (above). 
The path $g_1 \rightarrow g_2 \rightarrow \cdots \rightarrow g_5$  connecting $\bm{x}_0 \in E$ to the proper node $\bm{x}^P \in T_5$ (below)}
\label{fig:path}
\end{figure}

\noindent {\it Step 3. Properties of edges with hanging nodes.} Consider an edge $g$ shared by two triangles $T, T' \in \mathbb{T}$, with $T' \in \mesh$; suppose that the midpoint $\hat{\bm{x}}$ of $g$ is a hanging node for $T'$, created by a refinement of $T$ to produce elements in $\mesh$. 
Then, $g$ cannot contain proper nodes except possibly the endpoints, since their presence would be possible only by a refinement of $T'$, which is ruled out by the assumption on $\hat{\bm{x}}$.

Consequently, the edge $g$ is partitioned by the hanging nodes into a number of edges $\tilde{e}$ of elements $\tilde{E} \in \mesh$ contained in $T$; 
recalling Remark \ref{rem:bound-global-index}, the number of such edges is bounded by $2^\Lambda$.

\noindent {\it Step 4. Sequence of elements along the path.} We apply the conclusions of {\it Step 3.} to each edge $g_m$ of the path defined in {\it Step 2}. We obtain the existence of a sequence of edges $e_n$ ($1 \leq n \leq N_E$ for some integer $N_E$) and corresponding elements $E_n \in \mesh$, such that  (see Fig. \ref{fig:subtriangles})
\begin{enumerate}
\item $e_n \subset \partial E_n$; 
\item $e_n \subseteq g_m$ for some $m$, and correspondingly $E_n \subseteq T_m$, with
$$
|E_n| \simeq |e_n|^2 \geq 2^{-2\Lambda} |g_m|^2 \simeq 2^{-2\Lambda} |T_m| \,,
$$
where the hidden constants only depend on the shape of the initial triangulation but not on $\Lambda$;
\item the number $N_E$ of such elements is bounded by $M2^\Lambda < 3\Lambda 2^\Lambda$;
\item writing $e_n = [\bm{x}_{n-1},\bm{x}_n]$, then $\bm{x}_0$ is the newest vertex of $E$, whereas $\bm{x}_N$ is the proper node $\bm{x}^P$.
\end{enumerate}
\begin{figure}[b!]
\begin{center}
\begin{overpic}[scale=0.35]{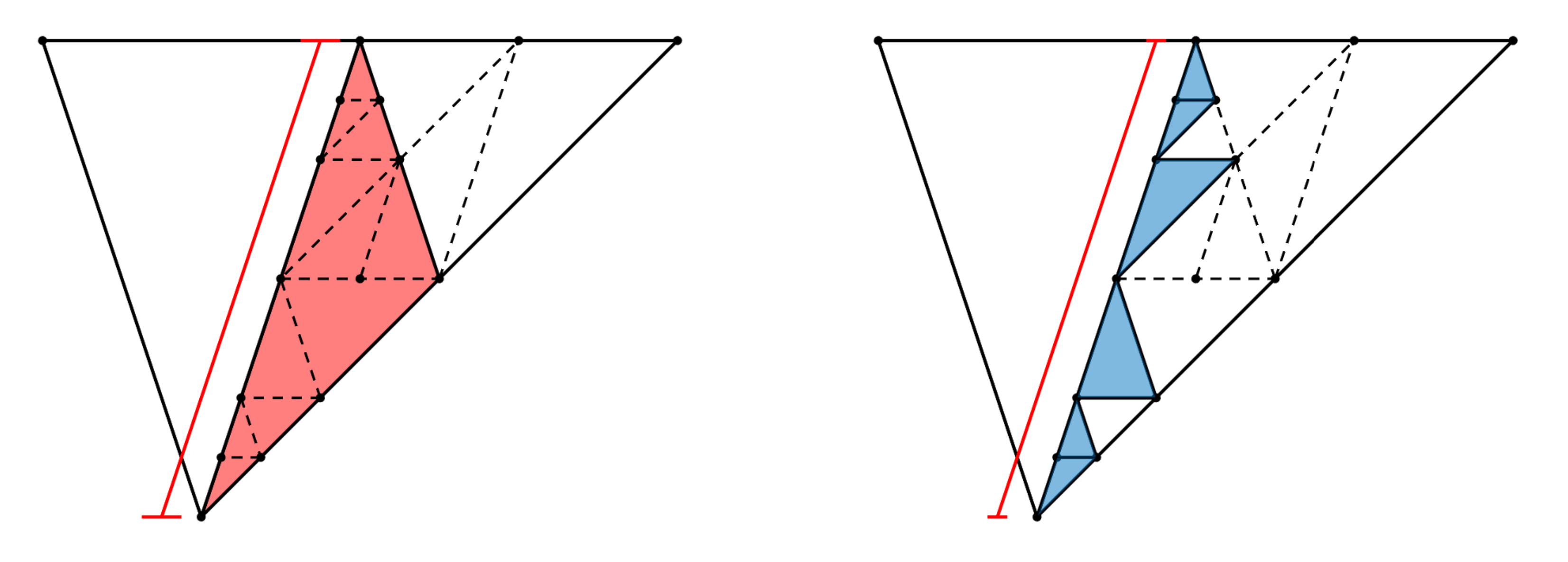}
\put(19, 14){\Large{$T_m$}}
\put(11, 18){$g_m$}
\put(65, 18){$g_m$}
\put(70, 22){$e_n$}
\put(73, 23){$E_n$}
\end{overpic}
\end{center}
\caption{{The edge $g_m$ of the ancestor $T_m$ of $E$ (left). The partition of $g_m$ into edges $e_n$ of elements $E_n \in \mesh$ (right)}}
\label{fig:subtriangles}
\end{figure}

\noindent {\it Step 5. Bound of $|v(\bm{x}_0)|$.} Let us write
$v(\bm{x}_0) = v(\bm{x}_0) - v(\bm{x}^P) = \sum_{n=1}^{N_E} \left( v(\bm{x}_{n-1}) - v(\bm{x}_n) \right) = \sum_{n=1}^{N_E} \nabla v_{| E_n} \cdot (\bm{x}_{n-1} - \bm{x}_n )$.  Then,
$$
|v(\bm{x}_0) |^2 \leq N_E \sum_{n=1}^{N_E} \|\nabla v_{| E_n}\|^2 \Vert \bm{x}_{n-1} - \bm{x}_n \Vert^2 \lesssim N_E \sum_{n=1}^{N_E} \|\nabla v_{| E_n}\|^2 |E_n| = N_E \sum_{n=1}^{N_E} |v|_{1,E_n}^2\,.
$$
Inserting this into \eqref{eq:B2} yields
\begin{equation}\label{eq:B4}
h_E^{-2} \Vert v \Vert_{0,E}^2 \lesssim  \left( | v |_{1,E}^2 + N_E \sum_{n=1}^{N_E} |v|_{1,E_n}^2 \right)\,.
\end{equation}
Taking into account also \eqref{eq:B1}, we end up with the bound
\begin{equation}\label{eq:B5}
\sum_{E \in \mesh} h_E^{-2} \Vert v \Vert_{0,E}^2 \lesssim  \left( \sum_{E \in \mesh} | v |_{1,E}^2 + 3\Lambda 2^\Lambda \sum_{E \in \mesh^h}  \sum_{n=1}^{N_E} |v|_{1,E_n}^2 \right)\,,
\end{equation}
where $\mesh^h$ denotes the set of elements in $\mesh$ whose vertices are all hanging nodes. 
Thus, we will arrive at the desired result \eqref{eq:B0} if we show that an element $E_n \in \mesh$ may occur in the double summation on the right-hand side a number of times bounded by some constant depending only on $\Lambda$.

\medskip

\noindent {\it Step 6. Combinatorial count.} Let $E_\star = E_n$ be such an element, which is contained in some triangle $T_\star = T_m$ according to {\it Step 4.}, where $T_\star$ belongs to a subchain of ancestors  ${\cal A}_M(E)$ defined in \eqref{eq:B7}, for some $E \in \mesh^h$.

Notice that $T_\star \in {\cal A}(E_\star)$ and its measure satisfies $|T_\star| \lesssim 2^{2\Lambda} |E_\star| $ by {\it Step 4}. Thus, the number $K$ of admissible $T_\star$'s satisfies
$$
2^K |E_\star| = |T_\star| \lesssim 2^{2\Lambda} |E_\star|\,,
$$ 
whence $K \leq 2\Lambda +c$ for some $c$ independent of $\Lambda$.

On the other hand, a triangle $T_\star$ may belong to a subchain of ancestors  ${\cal A}_M(E)$, for at most $2^{3\Lambda}$ descendants $E$, since we have seen in {\it Step 2.} that $M < 3\Lambda$.

We conclude that $E_\star$ may occur in the double summation on the right-hand side of \eqref{eq:B5} at most $(2\Lambda +c)2^{3\Lambda}$ times. This concludes the proof of Proposition 
\ref{prop:scaledPoincare}.

\section{Interpolation errors: proof of Proposition \ref{prop:compareInterp}}\label{sec:bound-Interp}

This section is devoted to the proof of Proposition \ref{prop:compareInterp}, which is crucial for the proof of Proposition \ref{prop:bound-ST} in the next section. 

Note that by the triangle inequality
$$
| v-\Imeshz v|_{1,\Omega} = | v-\Imeshz v|_{1,\mesh} \leq | v-\Imesh v|_{1,\mesh} + | \Imesh v-\Imeshz v|_{1,\mesh}  \,,
$$
it is enough to prove the bound
\begin{equation}\label{eq:pre-bound-interp}
| \Imesh v-\Imeshz v|_{1,\mesh}  \lesssim | v-\Imesh v|_{1,\mesh} \,.
\end{equation}
To this end, we need several preparatory results that allow us to express both semi-norms as  sums of hierarchical details. Let us start by considering the right-hand side.

Let $E$ be any element in $\mesh$. Define
\begin{eqnarray*}
&& {\cal N}_E = \{\bm{x} : \bm{x} \text{ is a node of } \mesh \text{ sitting on }\partial E\}, \\
&& {\cal V}_E = \{\bm{x} : \bm{x} \text{ is a vertex of } E\}, \\
&& {\cal H}_E = {\cal N}_E \setminus {\cal V}_E = \{\bm{x} : \bm{x} \text{ is a hanging node for }E\}.
\end{eqnarray*}

To each function $v \in \VE$ we associate a vector $d(v) = \{ d(v,\bm{z}) \}_{\bm{z} \in {\cal N}_E}$ 
that collects the following values, so called {\it hierarchical details} of $v$: 
\begin{equation}\label{eq:def-d}
d(v,\bm{z}) = \begin{cases} v(\bm{z}) & \text{if } \bm{z} \in {\cal V}_E, \\
v(\bm{z})- \frac12(v(\bm{z}')+v(\bm{z}'') ) & \text{if } \bm{z} \in {\cal H}_E \,,
\end{cases}
\end{equation}
where for $\bm{z} \in {\cal H}_E$ we denote by $ \bm{z}', \bm{z}'' \in  {\cal B}(\bm{z})$ the endpoints of the edge halved to create $\bm{z}$.
While the collection $\{ v(\bm{z}) \}_{\bm{z} \in {\cal N}_E}$ represents the coefficients expressing $v \in \VE $ in terms of the (local) dual basis associated to the degrees of freedom, the collection $\{ d(v,\bm{z}) \}_{\bm{z} \in {\cal N}_E}$ represents the coefficients with respect to a hierarchical-type basis. 

The following lemma introduces a relation between the $H^1$ semi-norm of a function $v \in \VE$ and the Euclidean norm of $d(v)$.

\begin{lemma}[local interpolation error vs hierarchical details]
Let $\mesh$ be $\Lambda$-admissible. For all $E \in \mesh$ the relation
\begin{equation}\label{eq:norm-equiv-Yser4}
\sum_{\bm{x} \in {\cal H}_E} d^2(v,\bm{x})  \lesssim  | v-{\cal I}_E v 
 |_{1,E}^2 \lesssim \sum_{\bm{x} \in {\cal H}_E} d^2(v,\bm{x}),
\qquad \forall v \in \V_E\,
\end{equation}
holds with hidden constants only depending on $\Lambda$.
\end{lemma}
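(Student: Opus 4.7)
The plan is to reduce the claim to two finite-dimensional equivalences. Let $w:=v-\mathcal{I}_E v \in \V_E$; since $\mathcal{I}_E v \in \mathbb{P}_1(E)$ interpolates $v$ at the three vertices, $w$ vanishes at every $\bm{x}\in\mathcal{V}_E$, and $w|_{\partial E}$ is piecewise linear on the edges of $\mesh$ lying on $\partial E$. The key observation is that each hanging node $\bm{y}\in\mathcal{H}_E$ is the midpoint of an edge $[\bm{y}',\bm{y}'']$ that lies entirely on one side $L$ of $E$ (this is the geometric effect of NVB). Hence the parents $\bm{y}',\bm{y}''$ belong either to $\mathcal{V}_E$ (where $w$ vanishes) or to $\mathcal{H}_E$ with strictly smaller global index. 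Because $\mathcal{I}_E v$ is linear on the segment $[\bm{y}',\bm{y}'']$, we obtain the crucial recursion
\begin{equation}\label{eq:rec-plan}
w(\bm{y}) \,=\, d(v,\bm{y}) + \tfrac{1}{2}\bigl(w(\bm{y}')+w(\bm{y}'')\bigr) \qquad \forall\,\bm{y}\in\mathcal{H}_E \,,
\end{equation}
which determines all nodal values of $w$ from the hierarchical details of $v$, through an ancestry tree of depth $\le \lambda(\bm{y})\le\Lambda$.

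The first building block will be the equivalence
\begin{equation}\label{eq:plan-step1}
|w|_{1,E}^2 \,\simeq\, \sum_{\bm{y}\in\mathcal{H}_E} w^2(\bm{y}) \,=\, s_E(w,w) \,,
\end{equation}
with constants depending only on $\Lambda$. Here I would invoke the stability \eqref{eq:stab-sE} of $s_E$ on $\V_E/\mathbb{R}$: for the nodal-average constant $c^\star = |\mathcal{N}_E|^{-1}\sum_{\bm{y}\in\mathcal{H}_E}w(\bm{y})$, one has $s_E(w-c^\star,w-c^\star)\simeq|w|_{1,E}^2$. A short bookkeeping computation based on $|\mathcal{V}_E|=3$ (independent of $\Lambda$) and $|\mathcal{N}_E|\le 3\cdot 2^\Lambda$ (Remark~\ref{rem:bound-global-index}) then upgrades this to \eqref{eq:plan-step1}, since $|\mathcal{V}_E|/|\mathcal{N}_E|\gtrsim 2^{-\Lambda}$ prevents $s_E(w,w)$ from collapsing under the shift by $c^\star$.

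With \eqref{eq:plan-step1} in hand, the right inequality of \eqref{eq:norm-equiv-Yser4} follows by iterating \eqref{eq:rec-plan}: each $w(\bm{y})$ is a convex combination of $d(v,\bm{z})$ for $\bm{z}$ ranging over the ancestry of $\bm{y}$ in $\mathcal{H}_E$, which has length at most $\Lambda$ by Definition~\ref{def:Lambda-partitions}. Cauchy--Schwarz over that chain yields $w^2(\bm{y})\lesssim \Lambda \sum_{\bm{z}\in\mathrm{anc}(\bm{y})} d^2(v,\bm{z})$, and summing over $\bm{y}\in\mathcal{H}_E$ produces a multiplicity bounded by $|\mathcal{H}_E|\le 3\cdot 2^\Lambda$, so that $\sum_{\bm{y}} w^2(\bm{y})\lesssim \Lambda\,2^\Lambda \sum_{\bm{z}} d^2(v,\bm{z})$. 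For the left inequality, I would rearrange \eqref{eq:rec-plan} into $d(v,\bm{y})=w(\bm{y})-\tfrac12(w(\bm{y}')+w(\bm{y}''))$, use $|a-\tfrac12 b-\tfrac12 c|^2\le 3(a^2+\tfrac14 b^2+\tfrac14 c^2)$, and sum, again controlling the parent-multiplicity by $|\mathcal{H}_E|$. Combined with \eqref{eq:plan-step1}, this gives $\sum_{\bm{y}} d^2(v,\bm{y})\lesssim |w|_{1,E}^2$.

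The main obstacle I expect is Step~\eqref{eq:plan-step1}: the stability \eqref{eq:stab-sE} is given modulo constants, whereas both sides of \eqref{eq:plan-step1} refer to $w$ itself. The remedy above works because vertex values $w(\bm{x})=0$ and $|\mathcal{V}_E|=3$ are fixed, which lets me transfer the estimate from $w-c^\star$ back to $w$ with a $\Lambda$-dependent but finite loss. All other steps are algebraic and rely only on $\Lambda$-admissibility via the uniform bounds $|\mathcal{H}_E|\le 3\cdot 2^\Lambda-3$ and ancestry depth $\le \Lambda$.
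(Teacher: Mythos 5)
Your proof is correct and agrees with the paper in its first step: both invoke the stability \eqref{eq:stab-sE} for the \texttt{dofi-dofi} form \eqref{eq:stab-dofidofi} to obtain $|v-\IE v|^2_{1,E}\simeq\sum_{\bm{x}\in\mathcal{H}_E}(v-\IE v)^2(\bm{x})$, and both reduce the lemma to the norm equivalence $\sum_{\bm{x}\in\mathcal{H}_E}w^2(\bm{x})\simeq\sum_{\bm{x}\in\mathcal{H}_E}d^2(w,\bm{x})$ on $\VV_E=\{w\in\VE:\,w(\bm{x})=0\ \forall\bm{x}\in\mathcal{V}_E\}$. The two proofs then diverge on this last step. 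The paper's argument is soft: both quantities are norms on the finite-dimensional space $\VV_E$ that depend only on the hanging-node pattern on $\partial E$, and $\Lambda$-admissibility allows only finitely many patterns, so equivalence with $\Lambda$-dependent constants follows abstractly without computation. You instead iterate the recursion $w(\bm{y})=d(v,\bm{y})+\tfrac12\bigl(w(\bm{y}')+w(\bm{y}'')\bigr)$ explicitly along the ancestry of each hanging node and close with Cauchy--Schwarz, which is constructive and yields explicit (if exponential-in-$\Lambda$) constants. Two minor points to tighten. First, the ancestry under $\bm{y}\mapsto\{\bm{y}',\bm{y}''\}$ is a tree of depth $\le\Lambda$, not a chain of length $\le\Lambda$; your estimate $w^2(\bm{y})\lesssim\Lambda\sum_{\bm{z}}d^2(v,\bm{z})$ nevertheless survives because the coefficient of each ancestor in the unrolled recursion is a sum of powers of $\tfrac12$ over paths and hence $\le 1$, while the coefficients over each level of the tree sum to $1$, so the total weight is $\le\Lambda$; the wording should reflect this. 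Second, the $c^\star$-shift you introduce to cope with $\VE/\mathbb{R}$ in \eqref{eq:stab-sE} is a legitimate repair (the three vanishing vertex values give $s_E(w-c^\star,w-c^\star)\ge 3(c^\star)^2$, so restoring the mean costs only a factor $\lesssim 2^\Lambda$), but it is arguably unnecessary: the references the paper cites establish \eqref{eq:stab-sE} precisely for functions of the form $v-\IE v$, i.e.\ vanishing at the three vertices, which is the only case you apply it to.
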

\begin{proof}
We recall that, according to \cite{BLR:2017,brenner-sung:2018}, the stability property \eqref{eq:stab-sE} holds true for the particular choice \eqref{eq:stab-dofidofi} of stabilization
(which need not be the stabilization used in our Galerkin scheme). Hence, we obtain
\begin{equation}\label{pippo-1}
| v-{\cal I}_E v |_{1,E}^2 
\simeq \sum_{\bm{x} \in {\cal H}_E} | v(\bm{x})-{\cal I}_E(\bm{x})|^2 
\quad \forall v \in \V_E \, ,
\end{equation}
where the symbol $\simeq$ denotes an equivalence up to uniform constants.
Since $d(v,\bm{x}) = d(v-{\cal I}_E v,\bm{x})$ for $\bm{x} \in {\cal H}_E$, equation \eqref{pippo-1} yields that
\eqref{eq:norm-equiv-Yser4} is equivalent to
$$
\sum_{\bm{x} \in {\cal H}_E} d^2(v-{\cal I}_E,\bm{x})  \simeq  
\sum_{\bm{x} \in {\cal H}_E} | v(\bm{x})-{\cal I}_E(\bm{x})|^2
\qquad \forall v \in \V_E \,,
$$
which in turn corresponds to
\begin{equation}\label{pippo-2}
\sum_{\bm{x} \in {\cal H}_E} d^2(w,\bm{x})  \simeq  
\sum_{\bm{x} \in {\cal H}_E} | w(\bm{x})|^2
\qquad \forall w \in \VV_E \,,
\end{equation}
where $\VV_E = \{ v \in \VE \ : \ v(\bm{x})=0 \ \forall \bm{x} \in {\cal V}_E \} $.
The two quantities appearing in \eqref{pippo-2} are norms on the finite dimensional space $\VV_E$. Furthermore, since both depend only on $\{ w(\bm{x}) \}_{\bm{x} \in {\cal H}_E}$, the values of $w$ at the hanging nodes, such norms do not depend on the position (that is, the coordinates) of the nodes of $E$. On the other hand, an inspection of \eqref{eq:def-d} reveals that the norm at the left hand side depends on the particular node ``pattern'' on $E$, i.e. on which sequential edge subdivision led to the appearance of hanging nodes on $\partial E$.
Nevertheless, since $\mesh$ is $\Lambda$-admissible,
not only the number of hanging nodes is uniformly bounded, but also the number of possible patterns is finite. As a consequence, property \eqref{pippo-2} easily follows from the equivalence of norms in finite dimensional spaces, with hidden constants only depending on $\Lambda$. 
\end{proof}

In order to get the global equivalence, we observe that the set ${\cal H}=\bigcup_{E \in \mesh} {\cal H}_E$ of all hanging nodes of $\mesh$ is a disjoint union, i.e., $\bm{x} \in {\cal H}$ if and only if  there exists a unique $E \in \mesh$ such that $\bm{x}\in {\cal H}_E$. Combining this with \eqref{eq:norm-equiv-Yser4}, and recalling that $(v-\Imesh v)_{|E}= v_{|E} - {\cal I}_E v_{|E}$ for any $E \in \mesh$, we obtain the following result.
\begin{corollary}[global interpolation error vs hierarchical details]\label{lem:norm-equiv-1}
The following semi-norm equivalence holds: 
\begin{equation}\label{eq:norm-equiv-Yser5} 
c_D \sum_{\bm{x} \in {\cal H}} d^2(v,\bm{x})  \leq |v- \Imesh v |_{1,\mesh}^2 \leq  C_D \sum_{\bm{x} \in {\cal H}} d^2(v,\bm{x}),
\qquad \forall v \in \Vmesh\,,
\end{equation}
where the constants $C_D \geq c_D >0$ depend on $\Lambda$ but are independent of the triangulation $\mesh$. \endproof
\end{corollary}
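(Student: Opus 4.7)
The plan is to derive \eqref{eq:norm-equiv-Yser5} as a direct consequence of the local equivalence \eqref{eq:norm-equiv-Yser4} combined with a disjointness property of the family $\{\mathcal{H}_E\}_{E\in\mesh}$; no new analytical ingredient is required beyond what was used to establish \eqref{eq:norm-equiv-Yser4}.

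First, since by construction $(\Imesh v)_{|E} = \IE(v_{|E})$ for each $E\in\mesh$, the broken seminorm splits cleanly as
\begin{equation*}
|v - \Imesh v|_{1,\mesh}^2 = \sum_{E\in\mesh} |v - \IE v|_{1,E}^2.
\end{equation*}
Applying \eqref{eq:norm-equiv-Yser4} elementwise, with constants $c,C>0$ depending only on $\Lambda$, and summing over $E\in\mesh$, I obtain
\begin{equation*}
c \sum_{E\in\mesh}\sum_{\bm{x}\in\mathcal{H}_E} d^2(v,\bm{x}) \;\le\; |v - \Imesh v|_{1,\mesh}^2 \;\le\; C \sum_{E\in\mesh}\sum_{\bm{x}\in\mathcal{H}_E} d^2(v,\bm{x}).
\end{equation*}

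Second, I would replace each double sum by the single sum $\sum_{\bm{x}\in\mathcal{H}} d^2(v,\bm{x})$. This relies on the fact that the decomposition $\mathcal{H} = \bigcup_{E\in\mesh} \mathcal{H}_E$ is disjoint: every hanging node $\bm{x}\in\mathcal{H}$ was created by bisecting a single edge of a triangle in a preceding triangulation, and under newest-vertex bisection this edge persists as an actual edge of exactly one element of $\mesh$, while every other neighbor containing $\bm{x}$ on its boundary has $\bm{x}$ as a proper vertex. Because the hierarchical detail $d(v,\bm{x})$ defined in \eqref{eq:def-d} for a hanging node depends only on the global values $v(\bm{x})$ and $v(\mathcal{B}(\bm{x}))$ and not on the element $E$, an interchange of summation yields $\sum_{E\in\mesh}\sum_{\bm{x}\in\mathcal{H}_E} d^2(v,\bm{x}) = \sum_{\bm{x}\in\mathcal{H}} d^2(v,\bm{x})$. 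Combined with the preceding display this produces \eqref{eq:norm-equiv-Yser5} with $c_D = c$ and $C_D = C$, both depending only on $\Lambda$.

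The main obstacle is conceptual rather than analytic: one must justify the disjointness of $\{\mathcal{H}_E\}_{E\in\mesh}$, which follows from the rigidity of newest-vertex bisection (a hanging node is never ``shared'' by two elements in the mesh in the sense of being a hanging node for both). Once this is granted, the proof is essentially a two-line summation of the local lemma, and no quantitative information about $\Lambda$ beyond what is already encoded in the constants of \eqref{eq:norm-equiv-Yser4} is needed.
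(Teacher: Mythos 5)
Your argument matches the paper's reasoning exactly: the paper states the corollary with $\endproof$ and justifies it in the preceding paragraph by precisely the three facts you invoke --- that $(\Imesh v)_{|E}=v_{|E}-\IE v_{|E}$ so the broken seminorm decomposes elementwise, the local equivalence \eqref{eq:norm-equiv-Yser4}, and the disjointness of the decomposition $\mathcal{H}=\bigcup_{E\in\mesh}\mathcal{H}_E$. Your added remark explaining why each hanging node belongs to $\mathcal{H}_E$ for a unique $E$ simply fills in a one-line assertion the paper leaves to the reader.
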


Let us now focus on the left-hand side of \eqref{eq:pre-bound-interp}.
Since $\Imesh v-\Imeshz v$ is affine on each element of $\mesh$, one has
$$
| \Imesh v-\Imeshz v|_{1,\mesh}^2 = \sum_{E \in \mesh} | \Imesh v-\Imeshz v|_{1,E}^2 \simeq 
\sum_{E \in \mesh} \sum_{\bm{x}\in {\cal V}_E} ({\cal I}_E v-\Imeshz v)^2(\bm{x}) \,.
$$
Note that $({\cal I}_E v)(\bm{x})=v(\bm{x})$ if $\bm{x} \in {\cal V}_E$. Furthermore, $(\Imeshz v)(\bm{x}) =v(\bm{x})$ if $\bm{x}$ is a proper node of $\mesh$. Thus, for any $\bm{x} \in {\cal H}$, let us define the detail
$$
\delta(v,\bm{x}) = v(\bm{x})- (\Imeshz v)(\bm{x}),
$$
so that
\begin{equation}\label{eq:norm-equiv-Izero}
| \Imesh v-\Imeshz v|_{1,\mesh}^2 \simeq \sum_{\bm{x} \in {\cal H}} \delta^2(v,\bm{z})\,.
\end{equation}
Recalling \eqref{eq:norm-equiv-Yser5}, the desired result \eqref{eq:pre-bound-interp} follows if we prove the bound
\begin{equation}\label{eq:bound-interp-2}
\sum_{\bm{x} \in {\cal H}} \delta^2(v,\bm{x}) \lesssim \sum_{\bm{x} \in {\cal H}} d^2(v,\bm{x})  \qquad \forall v \in \Vmesh\,.
\end{equation}
From now on, to ease the notation, we assume $v$ fixed and we write $d(\bm{x}):=d(v,\bm{x})$, $\delta(\bm{x}):=\delta(v,\bm{x})$, and $v^*:=\Imeshz v$. 
Setting
$$
\bm{\delta} = \big(\delta(\bm{x}) \big)_{\bm{x} \in {\cal H}} \,, \qquad \bm{d} = \big(d(\bm{x}) \big)_{\bm{x} \in {\cal H}} \,,
$$
the desired inequality \eqref{eq:bound-interp-2} is equivalent to
\begin{equation}\label{eq:rel-delta-d-1}
\Vert \bm{\delta} \Vert_{l^2({\cal H})} \lesssim  \Vert \bm{d} \Vert_{l^2({\cal H})} \,.
\end{equation}

We can relate $\bm{\delta}$ to $\bm{d}$ as follows: let $\bm{x} \in {\cal H}$, and let $\bm{x}', \bm{x}'' \in {\cal B}(\bm{x})$; since $v^*$ is linear on the segment $[\bm{x}', \bm{x}'']$, one has
\begin{eqnarray}\label{eq:rel-delta-d-2}
\delta(\bm{x}) &=& v(\bm{x})-v^*(\bm{x})  =  v(\bm{x})-\tfrac12(v^*(\bm{x}')+v^*(\bm{x}''))  \nonumber \\ 
&= &v(\bm{x})-\tfrac12(v(\bm{x}')+v(\bm{x}'')) + \tfrac12(v(\bm{x}')-v^*(\bm{x}'))  +  \tfrac12(v(\bm{x}'')-v^*(\bm{x}''))  \nonumber \\
&=& d(\bm{x}) + \tfrac12 \delta(\bm{x}')+ \tfrac12 \delta(\bm{x}'').
\end{eqnarray}
Thus, we have $\bm{\delta} = \bm{W} \bm{d}$
for a suitable matrix of weights $\bm{W} : l^2({\cal H}) \to l^2({\cal H})$, and  \eqref{eq:rel-delta-d-1} holds true for any $\bm{d}$ if and only if  
$$
\Vert \bm{W} \Vert_2 \lesssim 1 \,.
$$

To establish this bound, it is convenient to organize the hanging nodes in a block-wise manner according to the values of the global index $\lambda \in [1, \Lambda_{\mesh}]$. Let
$$
{\cal H} = \bigcup_{1 \leq \lambda \leq \Lambda_{\mesh}} {\cal H}_\lambda  \qquad \text{with \ } {\cal H}_\lambda = \{\bm{x} \in {\cal H} : \lambda(\bm{x}) = \lambda \}\,,
$$
and let $\bm{\delta} = \big(\bm{\delta}_\lambda \big)_{1 \leq \lambda \leq \Lambda_{\mesh} }$, $\bm{d} = \big(\bm{d}_\lambda \big)_{1 \leq \lambda \leq \Lambda_{\mesh} }$
be the corresponding decompositions on the vectors $\bm{\delta}$ and $\bm{d}$; then, the matrix $\bm{W}$, considered as a block matrix, can be factorized as
\begin{equation}\label{eq:factorW}
\bm{W} =\bm{W}_{\Lambda_\mesh} \bm{W}_{\Lambda_\mesh-1} \cdots \bm{W}_{\lambda} \cdots \bm{W}_{2} \bm{W}_{1} \,,
\end{equation}
where the lower-triangular matrix $\bm{W}_{\lambda}$ realizes the transformation \eqref{eq:rel-delta-d-2} for the hanging nodes of index $\lambda$, leaving unchanged the other ones. In particular, $\bm{W}_{1} = \bm{I}$ since $\delta(\bm{x}') = \delta(\bm{x}'') =0$ when $\bm{x}'$ and $\bm{x}''$ are proper nodes; on the other hand,  any other $\bm{W}_{\lambda}$ differs from the identity matrix only in the rows corresponding to the block $\lambda$: each such row contains at most two non-zero entries, equal to $\frac12$, in the off-diagonal positions, and 1 on the diagonal (see Fig. \ref{fig:pp}, left).
In order to estimate the norm of $\bm{W}_{\lambda}$, we use H\"older's inequality $\Vert \bm{W}_{\lambda} \Vert_2 \leq \left( \Vert \bm{W}_{\lambda} \Vert_1 \Vert \bm{W}_{\lambda} \Vert_\infty \right)^{1/2}$. Easily one has
$$
\Vert \bm{W}_{\lambda} \Vert_\infty \leq \frac12 + \frac12 + 1 =2\,, \qquad \Vert \bm{W}_{\lambda} \Vert_1 \leq 5 \frac12 +1= \frac72 \,,
$$
the latter inequality stemming from the fact that a hanging node of index $< \lambda$ may appear on the right-hand side of  \eqref{eq:rel-delta-d-2} at most 5 times (since at most 5 edges meet at a node, see Fig. \ref{fig:pp}, right).
\begin{figure}[!htb] 
\begin{center}
        \begin{tikzpicture}[scale=0.5]
\draw[very thick] (0,0) rectangle (8,8);
\draw [line width=0.01mm](0,8) -- (8,0);
\path [draw, thick](0,3) -- (8,3) -- (8,2) -- (0,2) -- cycle;
\draw[very thick] (5,2) rectangle (6,3);
\draw[line width=0.01mm] (0,2.5) -- (8,2.5);
\fill (2,2.5) circle[radius=2pt];
\node at (1.7,2.6) {$\frac{1}{2}$};
\fill (3.5,2.5) circle[radius=2pt];
\node at (3.2,2.6) {$\frac{1}{2}$};
\node at (6,6) {$0$};
\node[draw opacity=1] at (2,6) {$1$};
\node[draw opacity=1] at (7,1) {$1$};
\node[draw opacity=1] at (5.5,2.5) {$1$};

\draw [very thick, decorate,
    decoration = {brace,mirror}] (0,-0.5) --  (5,-0.5)
    node[pos=0.5,below=10pt,black]{{hanging~nodes~of~index~$<\lambda$}};
    
    \draw [very thick, decorate,
    decoration = {calligraphic brace,mirror}] (8.2,2) --  (8.2,3)
node[pos=0.6,right=10pt,black]   {{hanging~nodes
}}
node[pos=0.1,right=10pt,black]   {{
~of~index~$\lambda$}};;
\end{tikzpicture}
\qquad \quad
\begin{overpic}[scale=0.25]{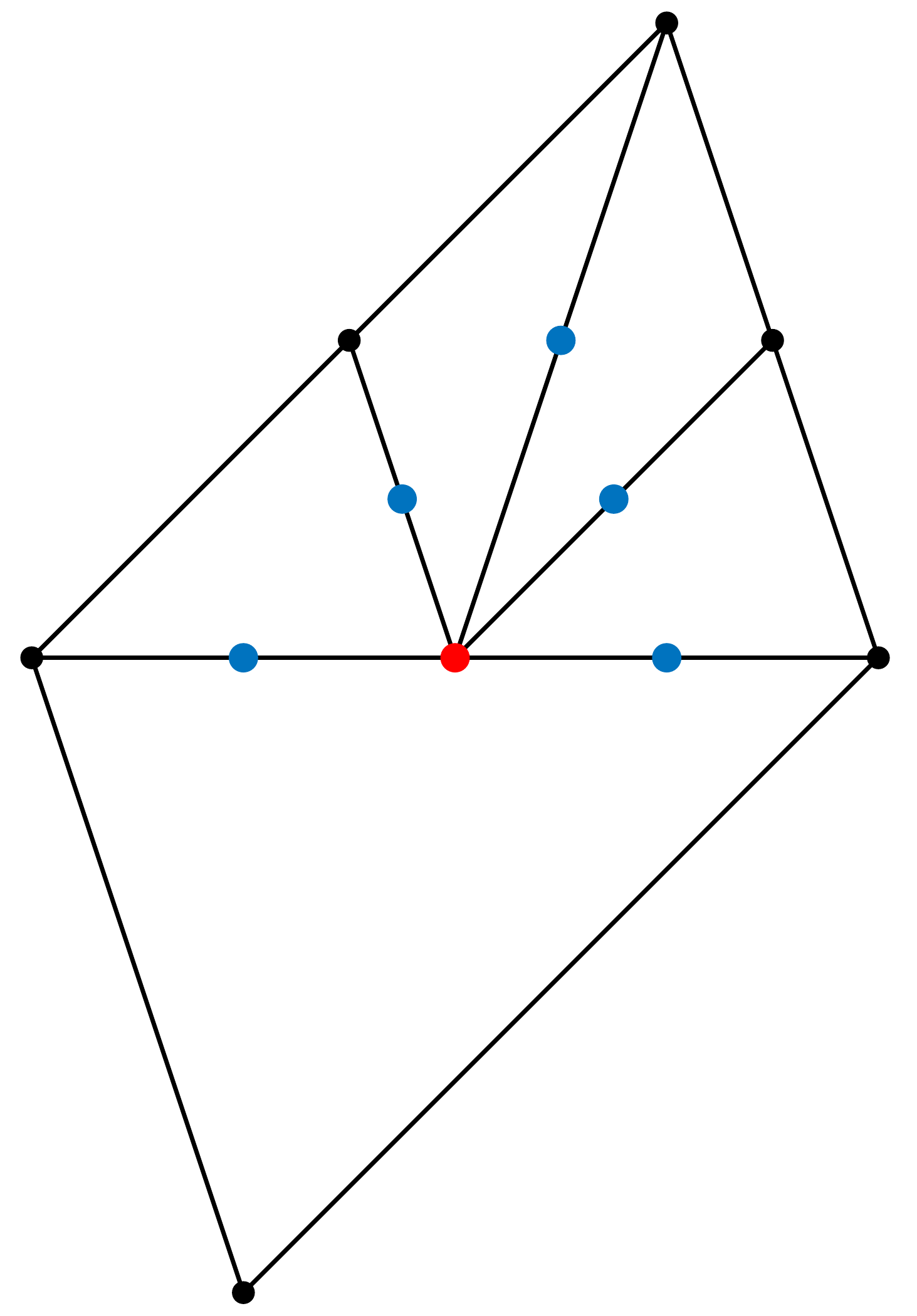}
\put(31,43){$\bm{x}'$}
\put(15,44){$\bm{x_1}$}
\put(22,60){$\bm{x_2}$}
\put(34,76){$\bm{x_3}$}
\put(50,60){$\bm{x_4}$}
\put(50,44){$\bm{x_5}$}
\end{overpic}
\end{center}        
\caption{Left: structure of one factor $\bm{W}_{\lambda}$ in the factorization \eqref{eq:factorW}. 
Right: given an hanging node $\bm{x}'$ with $\lambda(\bm{x}')< \lambda$, at most 5 hanging nodes $\bm{x_i}$ for $i=1, \dots, 5$ of index $\lambda$ may be such that
$\bm{x}' \in B(\bm{x_i})$.}
\label{fig:pp}
\end{figure}
Hence, $\Vert \bm{W}_{\lambda} \Vert_2 \leq 7^{1/2}$, which yields
$$
\Vert \bm{W} \Vert_2 \leq \prod_{2 \leq \lambda \leq \Lambda_\mesh} \Vert \bm{W}_{\lambda} \Vert_2  \leq 7^{(\Lambda_\mesh-1)/2} \leq 7^{(\Lambda-1)/2}
$$
as desired. Concatenating \eqref{eq:norm-equiv-Izero}, \eqref{eq:bound-interp-2}  and \eqref{eq:norm-equiv-Yser5}, we conclude the proof of Proposition \ref{prop:compareInterp}.

\section{Bounding the stabilization term by the residual: proof of Proposition \ref{prop:bound-ST}}\label{sec:bound-ST}

By \eqref{eq:propB1} and the definition \eqref{eq:def-BT}, for all $w \in \Vmeshz$ we obtain
\begin{equation*}
\gamma  \Smesh(\umesh, \umesh) = \gamma \Smesh(\umesh, \umesh-w)= \Bmesh(\umesh, \umesh-w) - \amesh(\umesh, \umesh-w)  - \mmesh(\umesh, \umesh-w)
\,.
\end{equation*}
By \eqref{def-Galerkin}, \eqref{eq:estim1}, the scaled Poincar\'e inequality and the continuity of $\Pimesh$ with respect to the $H^1$ broken seminorm we get
\begin{equation*}
\begin{aligned}
\Bmesh(\umesh, \umesh-w) &- \mmesh(\umesh, \umesh-w) = 
(f - c\Pimesh \umesh , \Pimesh (\umesh - w) )_\Omega\\
& \leq \sum_{E \in \mesh} h_E \Vert \rmesh(E;\umesh,\data) \Vert_{0,E} \, h_E^{-1} 
\left(\Vert \umesh - w \Vert_{0,E} + 
h_E \vert \umesh - w \vert_{1, E} \right)\,.
\end{aligned}
\end{equation*}
On the other hand, by \eqref{eq;def-aT}, \eqref{eq:def-PinablaE}  and \eqref{eq:estim1}
\begin{equation*}
\begin{split}
\amesh(\umesh, \umesh-w) &= \sum_{E \in \mesh} (A_E \nabla \PiE \umesh, \nabla \PiE (\umesh -w))_E = \sum_{E \in \mesh} (A_E \nabla \PiE \umesh, \nabla (\umesh -w))_E \\
& = \sum_{E \in \mesh} ( (A_E \nabla \PiE \umesh)\cdot \boldsymbol{n}, \umesh - w)_{\partial E}  = \sum_{e \in \edge} ( \jmesh(e;\umesh,\data), \umesh-w)_e  \\
& \leq  \tfrac12 \sum_{E \in \mesh} \sum_{e \in \edgeE} h_E^{1/2} \Vert  \jmesh(e;\umesh,\data) \Vert_{0,e} \ h_E^{-1/2} \Vert \umesh - w \Vert_{0,e} \,.
\end{split}
\end{equation*}
Recalling \eqref{eq:estim3} and \eqref{eq:estim4}, we thus obtain for any $\delta >0$
\begin{equation}\label{eq:bound-ST1}
\gamma \Smesh(\umesh, \umesh) \leq \frac1{2\delta} \, \etamesh^2(\umesh, \data) + \frac{\delta}{2} \, \Phi_\mesh(\umesh-w) \qquad \forall w \in \Vmeshz \,,
\end{equation}
with 
$$
\begin{aligned}
\Phi_\mesh(\umesh-w) &= \sum_{E \in \mesh} \left( h_E^{-2} \Vert \umesh - w \Vert_{0,E}^2 + \vert \umesh - w \vert_{1,E}^2 + \sum_{e \in \edgeE} h_E^{-1} \Vert \umesh - w \Vert_{0,e}^2 \right) 
\\
&\lesssim 
\sum_{E \in \mesh} \left(h_E^{-2} \Vert \umesh - w \Vert_{0,E}^2 + |\umesh - w|^2_{1,E}\right)  \,.
\end{aligned}
$$ 
At this point, we choose $w = \Imeshz \umesh$ in \eqref{eq:bound-ST1} and apply \eqref{eq:B0} to $\umesh-\Imeshz \umesh$, getting 
$\Phi_\mesh(\umesh-\Imeshz \umesh)  \, \lesssim \, | \umesh-\Imeshz \umesh|_{1,\Omega}^2 $.
Then recalling \eqref{eq:bound-interp} and \eqref{eq:stab-norm},  we derive the existence of a constant $C_\Phi>1$ independent of $\mesh$ and $\umesh$, such that
$ \Phi_\mesh(\umesh-\Imeshz \umesh) \leq C_\Phi \, \Smesh(\umesh, \umesh) $. 
We obtain the desired result by choosing $\delta = \frac{\gamma}{C_\Phi}$ in \eqref{eq:bound-ST1}, and setting $C_B := C_\Phi$.

\section{Variable data}\label{sec:extensions}

We briefly consider the extension of Propositions \ref{prop:aposteriori} and \ref{local-lower-bound} to the case of variable data $\data=(A,c,f)$, while we postpone to the forthcoming paper \cite{BCNVV:22} the development of an AVEM for variable data. To this end, we denote by $\appdata=(\widehat{A},\widehat{c},\widehat{f})$ a piecewise constant approximation to $\data$. The discrete virtual problem is obtained from \eqref{def-Galerkin} by taking $A_E=\widehat{A}_E, c_E=\widehat{c}_E$ and $f_E=\widehat{f}_E$ in \eqref{eq;def-aT} and \eqref{discr-rhs}, respectively.
Similarly, we define $\etamesh^2(\umesh,\appdata)$ from \eqref{eq:estim1}-\eqref{eq:estim2}. The following result generalizes Proposition \ref{prop:aposteriori} (upper bound) to variable data.
\begin{proposition}[global upper bound]\label{prp:variable}
There exists a constant $\widehat{C}_\text{apost} >0$  depending on $\Lambda$ and $\data$, but independent of $u$, $\mesh$, $\umesh$ and $\gamma$, such that
\begin{equation}\label{eq:apost2}
\vert u - \umesh \vert_{1, \Omega}^2  \leq \widehat{C}_\text{apost} \left( \etamesh^2(\umesh,\appdata) 
+ \Smesh(\umesh, \umesh) + \dataoscmesh^2(\umesh,\appdata)  \right) 
\end{equation} 
where 
\begin{eqnarray}
&&\dataoscmesh^2(\umesh,\appdata)=\sum_{E\in\mesh} \dataoscmesh^2(E;\umesh,\appdata)\ , \nonumber\\
&&\dataoscmesh^2(E;\umesh,\appdata)=h_E^2\|f-\widehat{f}_E\|_{0,E}^2+ \| (A-\widehat{A}_E)\nabla\Pi^\nabla_E \umesh\|_{0,E}^2 +  \| (c-\widehat{c}_E)\Pi^\nabla_E \umesh\|_{0,E}^2.\nonumber
\end{eqnarray}
\end{proposition}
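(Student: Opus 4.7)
The plan is to mimic the proof of Proposition \ref{prop:aposteriori}, tracking carefully the new contributions that arise because the discrete bilinear form and loading use the piecewise constant approximation $\appdata$ rather than $\data$. Setting $v = u - u_\mesh \in \V$ and $v_\mesh = \widetilde{\mathcal{I}}_\mesh^0 v \in \Vmeshz$, coercivity of $\cB$ together with the decomposition $v = v_\mesh + (v - v_\mesh)$ gives
\[
|u - u_\mesh|_{1,\Omega}^2 \lesssim \cB(u - u_\mesh, v) = \big[(f, v - v_\mesh)_\Omega - \cB(u_\mesh, v - v_\mesh)\big] + \cB(u - u_\mesh, v_\mesh) =: I + II.
\]
The goal for each of $I$ and $II$ is to produce only three kinds of contributions: the hat-data residual $\etamesh(u_\mesh, \appdata)$, the stabilization $\Smesh(u_\mesh, u_\mesh)^{1/2}$, and the data oscillation $\dataoscmesh(u_\mesh, \appdata)$.

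For $I$, with $w = v - v_\mesh$, I would decompose element-wise
\[
(f, w)_E - (A\nabla u_\mesh, \nabla w)_E - (c u_\mesh, w)_E = J_E^{(1)} + J_E^{(2)} + J_E^{(3)},
\]
where $J_E^{(1)} := (\widehat{f}_E - \widehat{c}_E \PiE u_\mesh, w)_E - (\widehat{A}_E \nabla \PiE u_\mesh, \nabla w)_E$ is the hat-data residual piece, $J_E^{(2)} := ((f - \widehat{f}_E), w)_E - ((A - \widehat{A}_E)\nabla \PiE u_\mesh, \nabla w)_E - ((c - \widehat{c}_E)\PiE u_\mesh, w)_E$ is the data-oscillation piece, and $J_E^{(3)} := -(A\nabla(u_\mesh - \PiE u_\mesh), \nabla w)_E - (c(u_\mesh - \PiE u_\mesh), w)_E$ is the stabilization piece. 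The coefficient splitting $A = \widehat{A}_E + (A - \widehat{A}_E)$ (and analogously for $c$, $f$) is designed so that the residual terms pair with $\PiE u_\mesh$, matching the definition of $\dataoscmesh$. Integration by parts in $\sum_E J_E^{(1)}$ followed by Cauchy--Schwarz, Lemma \ref{lm:quasi-inetrp} and standard edge Clément estimates gives $|\sum_E J_E^{(1)}| \lesssim \etamesh(u_\mesh, \appdata)\, |v|_{1,\Omega}$, exactly as in Proposition \ref{prop:aposteriori}. Cauchy--Schwarz applied to $\sum_E J_E^{(2)}$ together with the Clément bounds $\|w\|_{0,E} \lesssim h_E |v|_{1,\omega_E}$ yields $\dataoscmesh(u_\mesh, \appdata)|v|_{1,\Omega}$, while $|\sum_E J_E^{(3)}| \lesssim \Smesh(u_\mesh,u_\mesh)^{1/2}|v|_{1,\Omega}$ follows from \eqref{eq:stab-norm}, the minimality of $\PiE$, and the Poincaré estimate $\|\PiE u_\mesh - u_\mesh\|_{0,E} \lesssim h_E |\PiE u_\mesh - u_\mesh|_{1,E}$.

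For $II$ I would establish a variable-data analog of Lemma \ref{L:Galerkin-orthogonality}. Since $\PiE v_\mesh = v_\mesh$ and $\Smesh(u_\mesh, v_\mesh) = 0$ on $\Vmeshz$, subtracting the discrete equation tested against $v_\mesh$ and distributing the same coefficient splitting as above produces
\[
\cB(u - u_\mesh, v_\mesh) = (f - \widehat{f}, v_\mesh)_\Omega + \sum_{E\in\mesh}\!\int_E (\widehat{A}_E - A)\nabla \PiE u_\mesh \cdot \nabla v_\mesh + \sum_{E\in\mesh}\!\int_E (\widehat{c}_E - c)\PiE u_\mesh\, v_\mesh
\]
plus a stabilization-type remainder $\sum_E \int_E A\nabla(\PiE u_\mesh - u_\mesh)\cdot \nabla v_\mesh + \sum_E \int_E c(\PiE u_\mesh - u_\mesh) v_\mesh$. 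The first line is bounded by $\dataoscmesh(u_\mesh, \appdata)|v_\mesh|_{1,\Omega}$ via Cauchy--Schwarz, and the remainder by $\Smesh(u_\mesh, u_\mesh)^{1/2}|v_\mesh|_{1,\Omega}$ as in the piecewise constant case. Finally, $H^1$-stability of $\widetilde{\mathcal{I}}_\mesh^0$ yields $|v_\mesh|_{1,\Omega} \lesssim |v|_{1,\Omega}$; combining the estimates for $I$ and $II$ and dividing by $|v|_{1,\Omega} = |u - u_\mesh|_{1,\Omega}$ produces \eqref{eq:apost2}.

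The principal obstacle is the loss of the clean Galerkin quasi-orthogonality identity \eqref{eq:Galerkin-orthogonality}, which was derived under Assumption \ref{ass:constant-coeff}: the subtraction of the discrete equation in $II$ now leaves coefficient-mismatch terms that must be handled precisely, and the coefficient splitting must be performed consistently across $I$ and $II$ so that the residuals pair with $\PiE u_\mesh$ (and not $u_\mesh$), matching the structure of $\dataoscmesh$ defined in the proposition.
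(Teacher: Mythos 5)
Your proof is correct and follows essentially the same route as the paper: both start from $\cB(u-\umesh,v)$ with $v_\mesh=\widetilde{\mathcal I}_\mesh^0 v$, split the coefficients as $A=\widehat A_E+(A-\widehat A_E)$ (and likewise for $c,f$) so that the oscillation pieces pair with $\PiE\umesh$, and peel off the $\widehat\data$-residual, the data oscillation, and the stabilization-type contributions. The paper writes the resulting identity in a single displayed line rather than via your $I+II$ split, but recombining your two pieces reproduces exactly the paper's expression (with the oscillation and $(\Pi^\nabla_E-I)$ terms tested against the full $v$), and the subsequent bounding steps coincide.
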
 
\begin{proof}
We proceed as in the proof of Proposition \ref{prop:aposteriori}. We set $v_\mesh=\widetilde{\mathcal I}_\mesh^0 v$ and we get 
\begin{eqnarray}
\cB(u - \umesh,v)&=&\sum_{E\in\mesh} \left\{
(\widehat{f},v-\vmesh)_E-(\widehat{A}_E \nabla 
\Pi^\nabla_E \umesh,\nabla(v-\vmesh))_E -
(\widehat{c}_E 
\Pi^\nabla_E \umesh,v-\vmesh)_E
\right\}\nonumber\\
&& +\sum_{E\in\mesh} \left\{
((\widehat{A}_E-A)\nabla \Pi^\nabla_E \umesh,\nabla v)_E + ((\widehat{c}_E-c) \Pi^\nabla_E \umesh,v)_E
\right\}\nonumber\\
&& +\sum_{E\in\mesh} \left\{
(A\nabla (\Pi^\nabla_E-I) \umesh,\nabla v)_E + (c (\Pi^\nabla_E-I) \umesh,v)_E
\right\}\nonumber\\
&&+~\gamma  \Smesh(\umesh, \vmesh)  + (f-\widehat{
f},v)\nonumber
\end{eqnarray}
from which the thesis easily follows using standard arguments.
\end{proof}
\begin{remark}
In the a posteriori bound  \eqref{eq:apost2} we highlight the presence of the oscillation term 
$\dataoscmesh(\umesh,\appdata)$ measuring the impact of data approximation on the error. 
\end{remark}
The following is a generalization  of Proposition \ref{local-lower-bound}  to variable coefficients.
\begin{proposition}[local lower bound]\label{local-lower-bound-2}
There holds
\begin{equation}
\eta_\mesh^2(E;u_\mesh,\appdata) \lesssim \sum_{E'\in \omega_E } \left( 
\| u-u_\mesh\|^2_{1,E'} + S_{E'}(u_\mesh,u_\mesh)
 + \dataoscmesh^2(E;\umesh,\appdata)\right)
\end{equation}
where $\omega_E:=\{E^\prime: \vert \partial E \cap \partial E^\prime\vert \not=0 \}$. The hidden constant is independent of $\gamma, h,u$ and $u_\mesh$.
\end{proposition}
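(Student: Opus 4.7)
The argument adapts the classical residual-based local lower bound proven by bubble-function techniques (as in Verfürth's framework and in \cite{Cangiani-etal:2017}), tracking carefully the extra contributions that arise from replacing the true data $(A,c,f)$ by the piecewise-constant surrogate $(\widehat A,\widehat c,\widehat f)$. The estimator $\eta_\mesh^2(E;\umesh,\appdata)$ has two pieces -- an element contribution $h_E^2\|\rmesh(E;\umesh,\appdata)\|_{0,E}^2$ and edge contributions $h_E\|\jmesh(e;\umesh,\appdata)\|_{0,e}^2$ -- which I would bound separately and then combine.

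For the element residual, set $r_E := \widehat f_E-\widehat c_E\PiE\umesh$ and let $b_E$ be the standard cubic bubble on $E$, so that $\varphi:=b_E r_E$, extended by zero, satisfies $\|r_E\|_{0,E}^2\lesssim\int_E r_E\varphi$ together with $\|\varphi\|_{0,E}\lesssim\|r_E\|_{0,E}$ and $|\varphi|_{1,E}\lesssim h_E^{-1}\|r_E\|_{0,E}$. The decisive rewriting is
\[
r_E=(\widehat f_E-f)+(f-c\umesh)+c(\umesh-\PiE\umesh)+(c-\widehat c_E)\PiE\umesh,
\]
which splits $r_E$ into an oscillation piece, the PDE residual $f-c\umesh$, a VEM-projection piece, and a coefficient-oscillation piece. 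Testing against $\varphi\in H^1_0(\Omega)$ in the weak form \eqref{eq:pde-var} turns $(f-c\umesh,\varphi)_E$ into $\int_E A\nabla(u-\umesh)\cdot\nabla\varphi + \int_E A\nabla\umesh\cdot\nabla\varphi$. In the second term I would add and subtract $\widehat A_E\nabla\PiE\umesh$ to produce the oscillation $(A-\widehat A_E)\nabla\PiE\umesh$, the polynomial-consistency piece $A\nabla(\umesh-\PiE\umesh)$, and the last piece $\int_E\widehat A_E\nabla\PiE\umesh\cdot\nabla\varphi$ which vanishes after integration by parts since its integrand has zero divergence and $\varphi$ vanishes on $\partial E$. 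Using the Cauchy-Schwarz inequality, the bubble bounds, the projection minimality $|\umesh-\PiE\umesh|_{1,E}\leq|\umesh-\IE\umesh|_{1,E}$, and \eqref{eq:stab-sE}-\eqref{eq:def-SE}, I collect
\[
h_E\|r_E\|_{0,E}\lesssim \|u-\umesh\|_{1,E}+S_E(\umesh,\umesh)^{1/2}+\dataoscmesh(E;\umesh,\appdata).
\]

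For the jump term, fix $e=\partial E\cap\partial E'$, let $J_e:=\jmesh(e;\umesh,\appdata)$, let $b_e$ be an edge bubble supported in $\omega_e=E\cup E'$, and set $\psi=b_e\widetilde J_e$ with $\widetilde J_e$ a suitable constant extension of $J_e$; this yields $\|J_e\|_{0,e}^2\lesssim\int_e J_e\psi$ together with $\|\psi\|_{0,E''}\lesssim h_E^{1/2}\|J_e\|_{0,e}$ and $|\psi|_{1,E''}\lesssim h_E^{-1/2}\|J_e\|_{0,e}$ for $E''\in\omega_e$ (using $h_e\simeq h_E$ from Remark \ref{rem:bound-global-index}). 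Piecewise integration by parts gives
\[
\int_e J_e\psi=\sum_{E''\subset\omega_e}\int_{E''}\widehat A_{E''}\nabla\Pi^\nabla_{E''}\umesh\cdot\nabla\psi,
\]
to which I apply the same rewriting strategy as in Step 1: invoking \eqref{eq:pde-var} produces $u-\umesh$ in the energy norm, the addition/subtraction of $A\nabla\umesh$ and $\widehat A_{E''}\nabla\PiE\umesh$ isolates one oscillation term and one stabilization term, while the remaining volumetric residual $(f-c\umesh)-r_{E''}$ contributes a factor already controlled by the internal-residual bound from Step 1. Combining these yields
\[
h_E^{1/2}\|J_e\|_{0,e}\lesssim\sum_{E''\subset\omega_e}\bigl(\|u-\umesh\|_{1,E''}+S_{E''}(\umesh,\umesh)^{1/2}+\dataoscmesh(E'';\umesh,\appdata)\bigr).
\]

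Squaring and summing the element residual estimate with the edge contributions for every $e\in\edgeE$ produces the desired bound, observing that the neighbourhoods $\omega_e$ appearing in the jump step are all contained in the larger patch $\omega_E$. The main technical hurdle is the bookkeeping in the rewriting of $r_E$ and in the edge-residual identity: one must route the coefficient mismatches so that $\PiE\umesh$ (rather than $\umesh$) is the argument of each oscillation contribution, in order to match the exact form of $\dataoscmesh^2(E;\umesh,\appdata)$ in the statement; all other estimates reduce to standard bubble-function calculus, the VEM stability \eqref{eq:stab-sE} and the polynomial-consistency property of $\PiE$.
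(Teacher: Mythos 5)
Your plan is correct and follows the same standard bubble-function route the paper relies on (its own proof of this proposition is a one-line deferral to ``standard arguments'' and to Cangiani et al.~2017); your splittings of $r_E$ and of the jump residual correctly route the coefficient mismatches through $\PiE\umesh$ so that the resulting terms match $\dataoscmesh^2(E;\umesh,\appdata)$. One minor bookkeeping slip: invoking the weak form on $(f-c\umesh,\varphi)_E$ also produces the zeroth-order term $\int_E c\,(u-\umesh)\,\varphi$ in addition to the gradient terms you list, but it is harmlessly bounded by $\|c\|_{L^\infty}\,\|u-\umesh\|_{1,E}\,\|r_E\|_{0,E}$ and so does not alter the conclusion.
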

\begin{proof}
Using standard arguments of a posteriori analysis the thesis follows as in \cite{Cangiani-etal:2017}.
\end{proof}

With these results at hand, we can extend the validity of Corollary \eqref{Corollary:stab-free} to the variable-coefficient case, as follows.

\begin{corollary}[stabilization-free a posteriori error estimates]\label{Corollary:stab-free-2}
There exist constants $\widehat{C}_\text{apost} \geq \widehat{c}_\text{apost} >0$  depending on $\Lambda$ and $\data$, but independent of $u$, $\mesh$, $\umesh$ and $\gamma$, such that if  $\gamma$ is chosen to satisfy $\gamma^2 > \displaystyle{\frac{C_B}{\widehat{c}_\text{apost}}}$, it holds
\begin{equation}
\vert u - \umesh \vert_{1, \Omega}^2  \leq \widehat{C}_\text{apost} \, \left( \, \left(1+ C_B\gamma^{-2} \right)   \etamesh^2(\umesh,\data)  + \dataoscmesh^2(\umesh,\appdata)  \right) \,.
\end{equation} 
\begin{equation}
\left(\widehat{c}_\text{apost} - C_B \gamma^{-2} \right)  \etamesh^2(\umesh,\data)    \leq    \vert u - \umesh \vert_{1, \Omega}^2  + \dataoscmesh^2(\umesh,\appdata)   \,.
\end{equation} 
\end{corollary}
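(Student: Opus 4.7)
The plan is to mirror the strategy used in the constant-data Corollary \ref{Corollary:stab-free}, combining the variable-data upper and lower bounds (Propositions \ref{prp:variable} and \ref{local-lower-bound-2}) with the stabilization-absorption estimate of Proposition \ref{prop:bound-ST}, and carrying the data-oscillation term $\Psi_\mesh^2(u_\mesh,\appdata)$ as an extra additive contribution throughout. A preliminary observation is that the estimator appearing on the right-hand sides of the statement should be read as $\eta_\mesh^2(u_\mesh,\appdata)$: indeed, the discrete problem defining $u_\mesh$ uses the piecewise constant approximation $\appdata$, and the local residual \eqref{eq:estim1} and jump \eqref{eq:estim2} are evaluated with those constants, so the natural estimator in the analysis is the one built with $\appdata$.

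For the upper bound, I would start from Proposition \ref{prp:variable},
\[
|u-u_\mesh|_{1,\Omega}^2 \le \widehat{C}_\text{apost}\bigl(\eta_\mesh^2(u_\mesh,\appdata) + S_\mesh(u_\mesh,u_\mesh) + \Psi_\mesh^2(u_\mesh,\appdata)\bigr),
\]
and then apply Proposition \ref{prop:bound-ST} to absorb the stabilization contribution, since $\gamma^2 S_\mesh(u_\mesh,u_\mesh) \le C_B\,\eta_\mesh^2(u_\mesh,\appdata)$. Substituting the latter yields the stated bound with the factor $(1+C_B\gamma^{-2})$ multiplying the estimator and $\Psi_\mesh^2(u_\mesh,\appdata)$ unchanged. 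No assumption on the size of $\gamma$ is required for this part.

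For the lower bound, I would sum the local estimate of Proposition \ref{local-lower-bound-2} over $E\in\mesh$, using the uniformly bounded overlap of the patches $\{\omega_E\}$, to obtain a constant $\widehat{c}_\text{apost}>0$ (depending on $\Lambda$ and $\data$, analogous to the one in Corollary \ref{global-lower-bound}) such that
\[
\widehat{c}_\text{apost}\,\eta_\mesh^2(u_\mesh,\appdata) \le |u-u_\mesh|_{1,\Omega}^2 + S_\mesh(u_\mesh,u_\mesh) + \Psi_\mesh^2(u_\mesh,\appdata).
\]
Applying once more the bound $S_\mesh(u_\mesh,u_\mesh) \le (C_B/\gamma^2)\,\eta_\mesh^2(u_\mesh,\appdata)$ from Proposition \ref{prop:bound-ST} and moving that term to the left-hand side delivers
\[
(\widehat{c}_\text{apost} - C_B\gamma^{-2})\,\eta_\mesh^2(u_\mesh,\appdata) \le |u-u_\mesh|_{1,\Omega}^2 + \Psi_\mesh^2(u_\mesh,\appdata),
\]
which is the claimed estimate as soon as the prefactor is positive, i.e., $\gamma^2 > C_B/\widehat{c}_\text{apost}$.

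The only non-routine point is to verify that Proposition \ref{prop:bound-ST} remains valid in the variable-coefficient setting. Its proof uses only the Galerkin identity \eqref{def-Galerkin}, the definitions \eqref{eq;def-aT} and \eqref{discr-rhs} of the forms $\amesh,\mmesh,\mathcal{F}_\mesh$ in terms of the element-wise constants actually used in the scheme, the scaled Poincar\'e inequality of Proposition \ref{prop:scaledPoincare}, and the interpolation comparison of Proposition \ref{prop:compareInterp}. None of these ingredients is sensitive to whether those constants come from the original data $\data$ or from the piecewise constant surrogate $\appdata$, so the proof transfers verbatim with $\data$ replaced by $\appdata$ throughout. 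Consequently the stabilization absorption is \emph{independent} of the oscillation $\Psi_\mesh^2(u_\mesh,\appdata)$, and the latter enters the final inequalities purely additively. I expect no further subtlety; the result is a direct transposition of Corollary \ref{Corollary:stab-free} to variable data.
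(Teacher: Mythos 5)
Your proposal is correct and follows essentially the route the paper intends: combine the variable-data upper bound (Proposition \ref{prp:variable}) and the globalized version of the local lower bound (Proposition \ref{local-lower-bound-2}) with the stabilization-absorption estimate of Proposition \ref{prop:bound-ST}, letting the data-oscillation term $\dataoscmesh^2(\umesh,\appdata)$ ride along additively. The paper states the Corollary without a written proof, remarking only that it follows from the preceding results, and your argument is the obvious assembly of those ingredients. Your preliminary observation that the estimator in the statement should be read as $\etamesh^2(\umesh,\appdata)$ is correct and worth making explicit, since \eqref{eq:estim1}--\eqref{eq:estim2} are only defined for piecewise constant arguments and Section~\ref{sec:extensions} introduces $\etamesh^2(\umesh,\appdata)$ precisely for this purpose. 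Likewise, your check that Proposition \ref{prop:bound-ST} transfers verbatim to the variable-data discrete problem is the one point that genuinely needs verification, and your reasoning — the proof in Section \ref{sec:bound-ST} uses only the Galerkin identity \eqref{def-Galerkin} with the discrete forms built from the piecewise-constant surrogate, Proposition \ref{prop:scaledPoincare}, and Proposition \ref{prop:compareInterp}, none of which sees the exact data — is sound. One minor cosmetic point: Proposition \ref{local-lower-bound-2} is stated with $\|u-u_\mesh\|^2_{1,E'}$ (full local $H^1$ norm) rather than the seminorm; after summing and appealing to the Poincar\'e inequality on $H^1_0(\Omega)$ for $u-u_\mesh$, this is absorbed into $\widehat{c}_\text{apost}$, so the seminorm appearing in the Corollary is justified, but it deserves a sentence.
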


\begin{remark}
So far, we have considered homogeneous Dirichlet boundary conditions. Following \cite{Sacchi-Veeser},  it is possible to extend our analysis to the case of non-homogeneous conditions, which amounts to incorporate the oscillation of the boundary data measured in $H^{1/2}(\partial\Omega)$ into the term $\dataoscmesh^2(\umesh,\appdata)$. Since this endeavor is similar to AFEMs, we omit the technical details and refer to \cite{Sacchi-Veeser}.
\end{remark}

\section{Numerical results}\label{sec:experiments}

This section contains  a discussion on the enforcement of $\Lambda$-admissibility and two sets of numerical experiments which corroborate our theoretical findings and elucidate some computational properties of AVEM.

\subsection{Enforcing $\Lambda$-admissibility}\label{sec:make-admissible}
In the VEM framework, clearly meshes generated by {\tt REFINE} need not be conforming. However, as already noted, we require that  our meshes are $\Lambda$-admissible, i.e., the global index is uniformly bounded by $\Lambda$. We now described how this is achieved within {\tt REFINE}.

Let $\texttt{BISECTION}(\mesh, E)$ be the procedure that implements the newest-vertex bisection of an element $E \in \mesh$. 
In the first part, the algorithm \texttt{REFINE} bisects all the marked elements $E \in \mathcal{M}$  employing $\texttt{BISECTION}(\mesh, E)$. This procedure clearly may in principle generate a mesh not $\Lambda$-admissible. Assume that there exists $\widehat{\bm{x}} \in \mathcal{N}$ such that $\lambda(\widehat{\bm{x}}) = \Lambda_\mesh > \Lambda$. Since the input mesh $\mesh$ of \texttt{REFINE} is $\Lambda$-admissible, necessarily  $\lambda(\widehat{\bm{x}}) = \Lambda+1$. 
Furthermore $\lambda(\widehat{\bm{x}}) > 0$, therefore $\widehat{\bm{x}}$ is an hanging node for an element (say) $\widehat{E}$ (see Fig. \ref{fig:ref}).
In order to restore the $\Lambda$-admissibility of the mesh we need to refine $\widehat{E}$. Two possible cases arise.
\texttt{Case A}:   
if the node $\widehat{\bm{x}}$ belongs to the opposite edge to the newest-vertex of $\widehat{E}$, then $\texttt{BISECTION}(\mesh, \widehat{E})$ immediately yields $\lambda(\widehat{\bm{x}}) \leq \Lambda$  (see Fig. \ref{fig:ref}).
\texttt{Case B}:   
if the node $\widehat{\bm{x}}$ belongs to the same edge of the newest-vertex of $\widehat{E}$, then in order to reduce the global index of $\widehat{\bm{x}}$ we need to refine twice the element $\widehat{E}$, namely after the first bisection, the new element (say) $\widehat{E}'$ having $\widehat{\bm{x}}$ as hanging node is further bisected (see Fig. \ref{fig:ref}). 
Notice that in the latter case the procedure can create a new node (the blue node in Fig.\ref{fig:ref}) possibly having global index greater than $\Lambda$. However in \cite{BCNVV:22} we prove that the algorithm \texttt{REFINE} here detailed is optimal in terms of degrees of freedom,  very much in the spirit of the completion algorithm for
conforming bisection meshes by Binev, Dahmen, and DeVore \cite{BDD:04}; see also 
\cite{NSV:09,NochettoVeeser:12,Stevenson:08}.

\begin{figure}[h!]
\begin{center}
\begin{overpic}[scale=0.2]{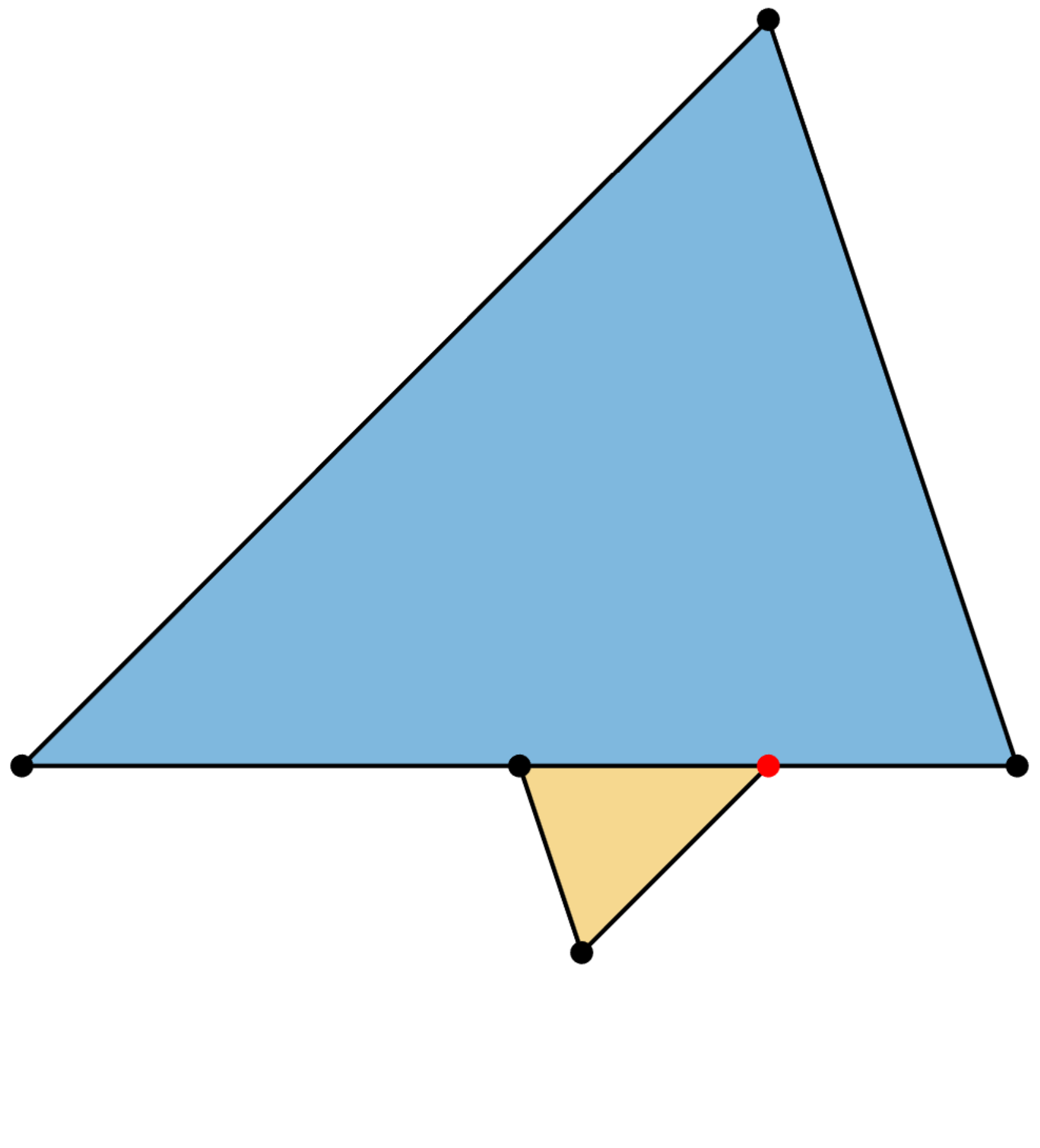}
\put(50,60){$\widehat{E}$}
\put(65,35){$\widehat{\bm{x}}$}
\end{overpic}
\qquad \quad
\begin{overpic}[scale=0.2]{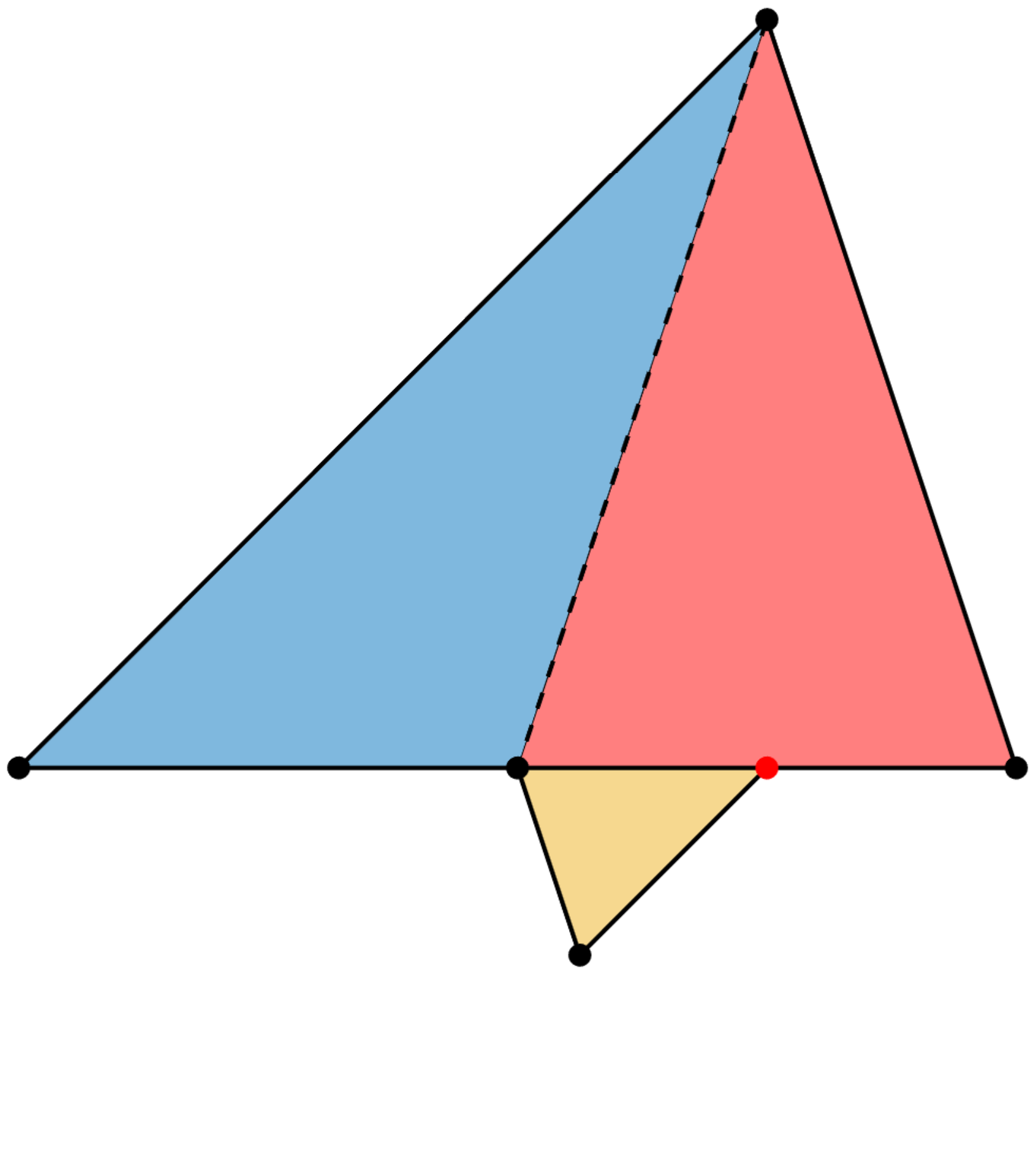}
\put(60,100){$\bm{nv}(\widehat{E})$}
\put(50,60){$\widehat{E}$}
\put(65,35){$\widehat{\bm{x}}$}
\put(40,7){\texttt{Case A}}
\end{overpic}
\qquad \quad
\begin{overpic}[scale=0.2]{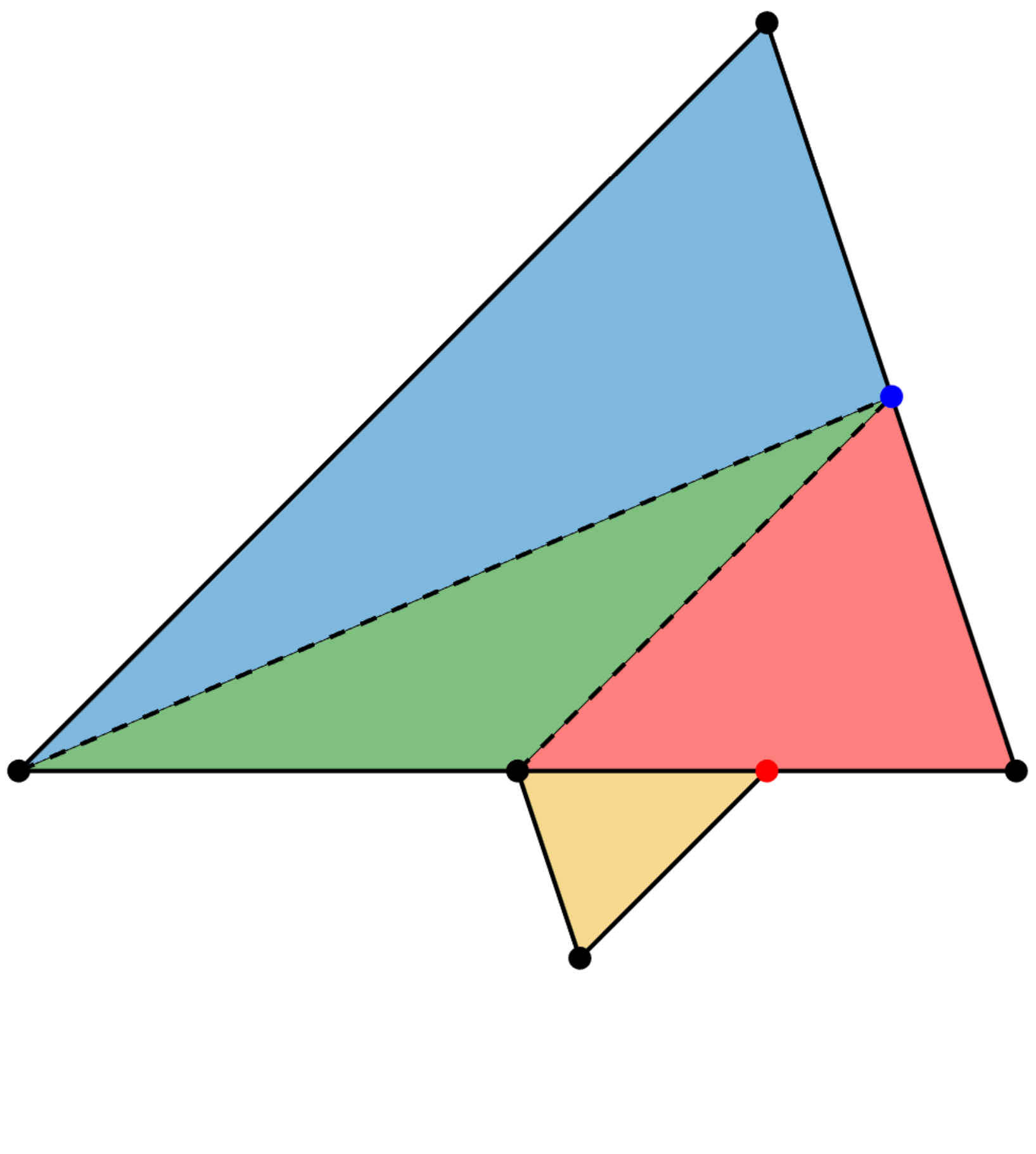}
\put(-10,25){$\bm{nv}(\widehat{E})$}
\put(50,60){$\widehat{E}$}
\put(65,35){$\widehat{\bm{x}}$}
\put(45,42){$\widehat{E}'$}
\put(40,7){\texttt{Case B}}
\end{overpic}
\end{center}
\caption{
Left: mesh element $\widehat{E}$ having $\widehat{\bm{x}}$ as hanging node, with $\widehat{\bm{x}}$ such that $\lambda(\widehat{\bm{x}})= \Lambda_\mesh > \Lambda$.
Middle: \texttt{case A} the node $\widehat{\bm{x}}$ belongs to the opposite edge to the newest-vertex of $\widehat{E}$, therefore one bisection is needed.
Right: \texttt{case B} the node $\widehat{\bm{x}}$ belongs to the same edge of the newest-vertex of $\widehat{E}$, therefore two bisections are needed.}
\label{fig:ref}
\end{figure}

The modules $\texttt{REFINE}$ and $\texttt{MAKE\_ADMISSIBLE}$ consist of the following steps:

\medskip
\begin{algotab}
\label{eq:refine}
  \>  $[\meshs]=\texttt{REFINE}(\mesh, \mathcal{M}, \Lambda)$
  \\
  \>  $\quad  \text{for } E \in \mathcal{M}$ 
  \\
  \>  $\quad \quad  [\mesh] = \texttt{BISECTION}(\mesh, E)$ 
  \\
  \>  $\quad \text{end for }$
  \\
  \>  $\quad \text{while } \Lambda_\mesh > \Lambda$ 
  \\
  \>  $\quad \quad \widehat{E} = \text{element having as hanging node a vertex with maximum global index $\widehat{x}$}$
  \\
  \>  $\quad \quad [\mesh] = \texttt{MAKE\_ADMISSIBLE}(\mesh, \widehat{E})$ 
  \\
  \>  $\quad  \text{end while}$
  \\
  \>  $\quad  \text{return}(\mesh)$
\end{algotab}
\medskip

\medskip
\begin{algotab}
\label{eq:make_admissibile}
  \>  $[\meshs] = \texttt{MAKE\_ADMISSIBLE}(\mesh, \widehat{E})$
  \\
  \>  $\quad  \text{if \texttt{Case A}}$ 
  \\
  \>  $\quad \quad  [\mesh] = \texttt{BISECTION}(\mesh, \widehat{E})$ 
  \\
  \>  $\quad \text{else}$
  \\
  \>  $\quad \quad  [\mesh] = \texttt{BISECTION}(\mesh, \widehat{E})$ 
  \\
  \>  $\quad \quad \widehat{E}' = \text{element having as hanging node a vertex with maximum global index $\widehat{x}$}$
  \\
  \>  $\quad \quad  [\mesh] = \texttt{BISECTION}(\mesh, \widehat{E}')$ 
  \\
  \>  $\quad  \text{end if}$
  \\
  \>  $\quad  \text{return}(\mesh)$
\end{algotab}
\medskip

\subsection{Test 1: Control of stabilization by the estimator}\label{sec:numerics-1}

The aim of this first test is to confirm the theoretical predictions of Proposition \ref{prop:bound-ST} and in particular to assess the sharpness of inequality \eqref{eq:bound-ST}.
To this end we solve a Poisson problem \eqref{eq:pde} combining the VEM setting \eqref{def-Galerkin} with the adaptive algorithm {\tt GALERKIN} described in Section \ref{sub:galerkin}.
We run {\tt GALERKIN} and iterate the loop \eqref{eq:paradigm} with the following stopping criterion based on the total number of degrees of freedom $\texttt{NDoFs}$, rather than a tolerance $\varepsilon$,
\begin{equation}
\label{eq:stopping}
\texttt{NDoFs} \geq \texttt{N\_Max} \,.
\end{equation}

In the present test we consider the L-shaped domain $\Omega = (-1, 1) \times (-1, 1) \setminus [0, 1] \times [-1, 0]$ and solve the Poisson problem \eqref{eq:pde} with $A=I$, $c=0$, $f=1$ and vanishing boundary conditions. The exact solution has a singular behaviour at the re-entrant corner.
In the test we adopt the loop \eqref{eq:paradigm} with D{\"o}rfler parameter $\theta = \texttt{0.5}$, and $\texttt{N\_Max} = \texttt{2000}$, furthermore we pick $\Lambda = \texttt{10}$. 
We adopt the \texttt{dofi-dofi} stabilization \eqref{eq:stab-dofidofi}.
To assess the effectiveness of bound \eqref{eq:bound-ST} we consider the following quantity
\[
\texttt{ratio} := \frac{\gamma^2 \Smesh(u_\mesh, u_\mesh)}{\etamesh^2(\umesh, \data) } \,.
\]
In Fig. \ref{fig:test-gamma} we display the quantity \texttt{ratio} for different values of the stabilization parameter $\gamma$ obtained with the adaptive algorithm \eqref{eq:paradigm}. 
Notice that for all the proposed values of $\gamma$ the quantity \texttt{ratio} at the first iteration of the algorithm is zero since the starting mesh $\mesh = \mesh_0$ is made of triangular elements consequently $\Smesh(u_\mesh, u_\mesh) =0$.
Fig. \ref{fig:test-gamma} shows that the estimate of Proposition \ref{prop:bound-ST} is sharp and for the proposed problem the constant $C_B$ in \eqref{eq:bound-ST} is bounded from above by $\texttt{0.1}$.

\begin{figure}[!hbt]
\begin{center}
\includegraphics[scale=0.3]{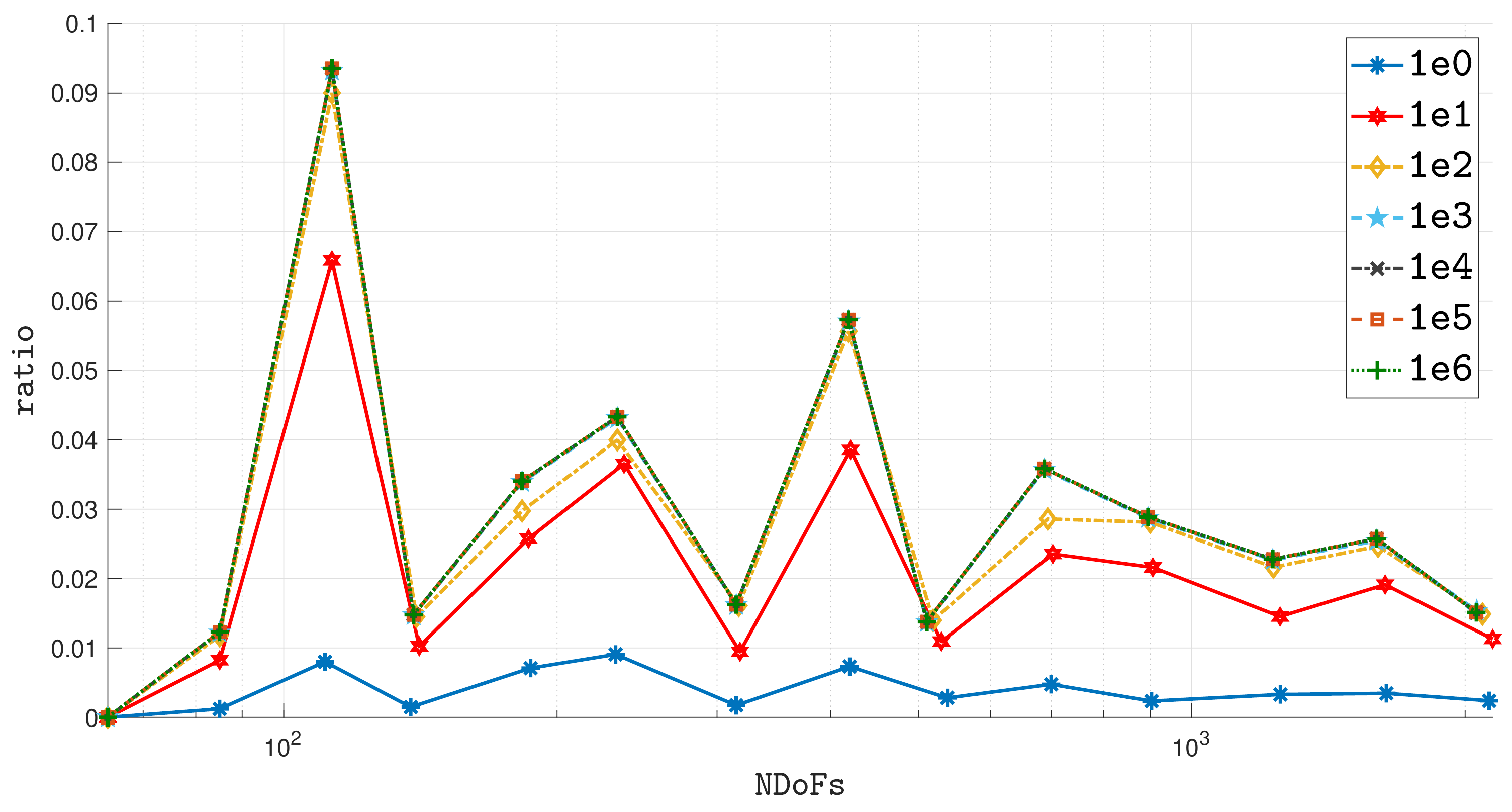}
\end{center}
\caption{Test 1. Sharpness of bound \eqref{eq:bound-ST}: \texttt{ratio} between the term $\gamma^2 \Smesh(u_\mesh, u_\mesh)$ and the term $\etamesh^2(\umesh, \data)$ obtained with the adaptive algorithm \eqref{eq:paradigm}.}
\label{fig:test-gamma}
\end{figure}

\subsection{Test 2: Kellogg's checkerboard pattern}\label{sec:numerics-2}

The purpose of this numerical experiment is twofold. First, we again confirm the theoretical results in Proposition \ref{prop:bound-ST} and Corollary \ref{Corollary:stab-free}. Second, we discuss the practical performance of {\tt GALERKIN} with a rather demanding example and compare it with the corresponding AFEM.
In order to compute the VEM error between the exact solution $u_{\rm ex}$ and
the VEM solution $u_\mesh$, we consider the computable $H^1$-like error quantity:
\[
\texttt{H\textasciicircum 1-error} := 
\frac{\left(\sum_{E \in \mesh} \|\nabla (u_{\rm ex} - \PiE u_\mesh)\|_{0, E}^2 \right)^{1/2}}
{\|\nabla u_{\rm ex}\|_{0, \Omega}} \,.
\]
Notice that \texttt{H\textasciicircum 1-error} (as well as the discrete problem \eqref{def-Galerkin}) depends only on the DoFs values of the discrete solution $u_\mesh$, hence, it is independent of the choice of the VEM space $\VE$ in \eqref{eq:def-VT}.
If the mesh $\mesh$ does not contain hanging nodes, obviously $\texttt{H\textasciicircum 1-error}$ coincides with the `true' $H^1$-relative error.
In the numerical test we use the \texttt{dofi-dofi} stabilization \eqref{eq:stab-dofidofi} with stabilization parameter $\gamma = \texttt{1}$ (cf. \eqref{eq:def-BT}), we pick $\Lambda = \texttt{10}$ (cf. Definition \ref{def:Lambda-partitions}) and  D{\"o}rfler parameter $\theta = \texttt{0.5}$ (cf. \eqref{eq:dorfler}), whereas the stopping parameter $\texttt{N\_Max}$ (cf. \eqref{eq:stopping})  will be specified later.

We consider from \cite[Example 5.3]{MorinNochettoSiebert:2000} the Poisson problem \eqref{eq:pde} with piecewise constant coefficients and vanishing load with the following data:
$\Omega = (-1, 1)^2$,  $A = a I$, with $a= \texttt{161.4476387975881}$ in the first and third quadrant and $a = \texttt{1}$ in the second and fourth quadrant, $c = 0$ and  $f = 0$.
According to Kellogg formula \cite{kellogg} the exact solution is given in polar coordinates by  
$u_{\rm ex}(r, \alpha) = r^\delta \nu(\alpha)$ where
\[
\nu(\alpha) := \left \{
\begin{array}{lll}
 \cos\left( \left(\frac{\pi}{2} - \sigma \right) \delta \right)  
  \cos\left( \left(\alpha - \frac{\pi}{2} + \rho \right) \delta \right) 
& \qquad
& \text{if $0 \leq \alpha \leq \pi/2$,}
\\
 \cos\left( \rho \delta \right)  
  \cos\left( \left(\alpha - \pi + \sigma \right) \delta \right) 
& \qquad 
& \text{if $\pi/2 \leq \alpha \leq \pi$,}
\\
 \cos\left( \sigma \delta \right)  
  \cos\left( \left(\alpha - \pi - \rho \right) \delta \right) 
& \qquad
& \text{if $\pi \leq \alpha \leq 3\pi/2$,}
\\
 \cos\left( \left(\frac{\pi}{2} - \rho \right) \delta \right)  
  \cos\left( \left(\alpha - \frac{3\pi}{2} - \sigma \right) \delta \right)
& \qquad
& \text{if $3\pi/2 \leq \alpha \leq 2\pi$,} 
\end{array}
\right .
\]
and where the numbers $\delta$, $\rho$, $\sigma$ satisfy suitable nonlinear relations. In particular we pick 
$ \delta = \texttt{0.1}$,  $\rho =  \texttt{pi/4}$ and $\sigma = \texttt{-14.92256510455152}$.
Notice that the exact solution $u_{\rm ex}$ is in the Sobolev space $H^{1+\epsilon}$ only for $\epsilon < \texttt{0.1}$ and thus is very singular at the origin.

In Fig. \ref{fig:test2-err_est_stab} (plot on the left) we display $\texttt{H\textasciicircum 1-error}$, the estimator $\etamesh(\umesh,\data)$ and the stabilization term $\Smesh(\umesh, \umesh)^{1/2}$ obtained with the adaptive algorithm \eqref{module:_GAL} with stopping parameter \texttt{N\_Max} = \texttt{25000}. The predictions of Proposition \ref{prop:bound-ST} and Corollary \ref{Corollary:stab-free} are confirmed: 
the estimator bounds from above both the energy error and the stabilization term. Furthermore, one can appreciate that, after a fairly long transient due to the highly singular structure of the solution, the error decay reaches asymptotically the 
theoretical optimal rate $\texttt{NDoFs\textasciicircum - 0.5}$ (whereas the estimator  decays with this rate along the whole refinement history).
\begin{figure}[!htb]
\begin{center}
\includegraphics[width=75mm,height=50mm]{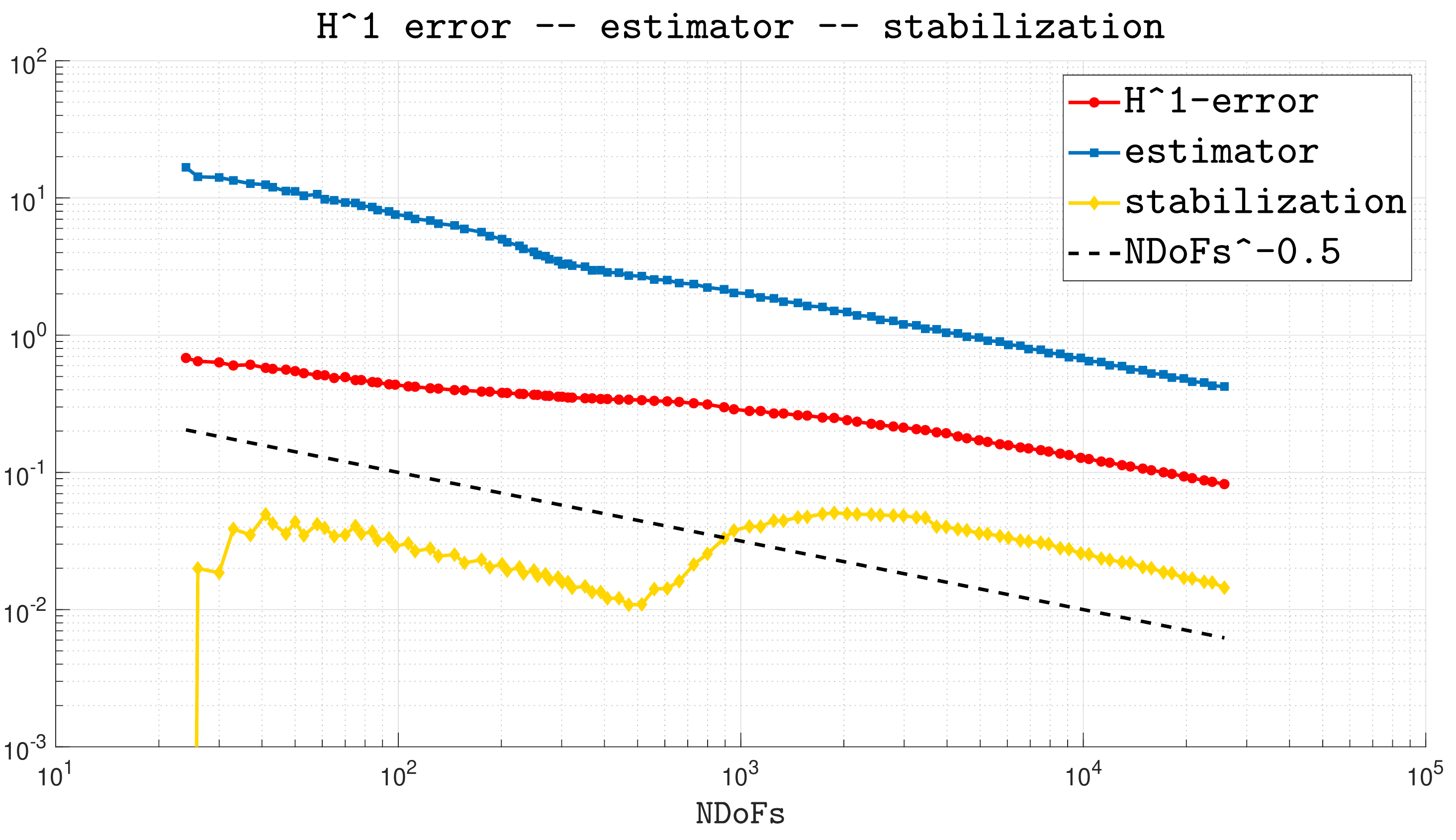}
\includegraphics[width=75mm,height=50mm]{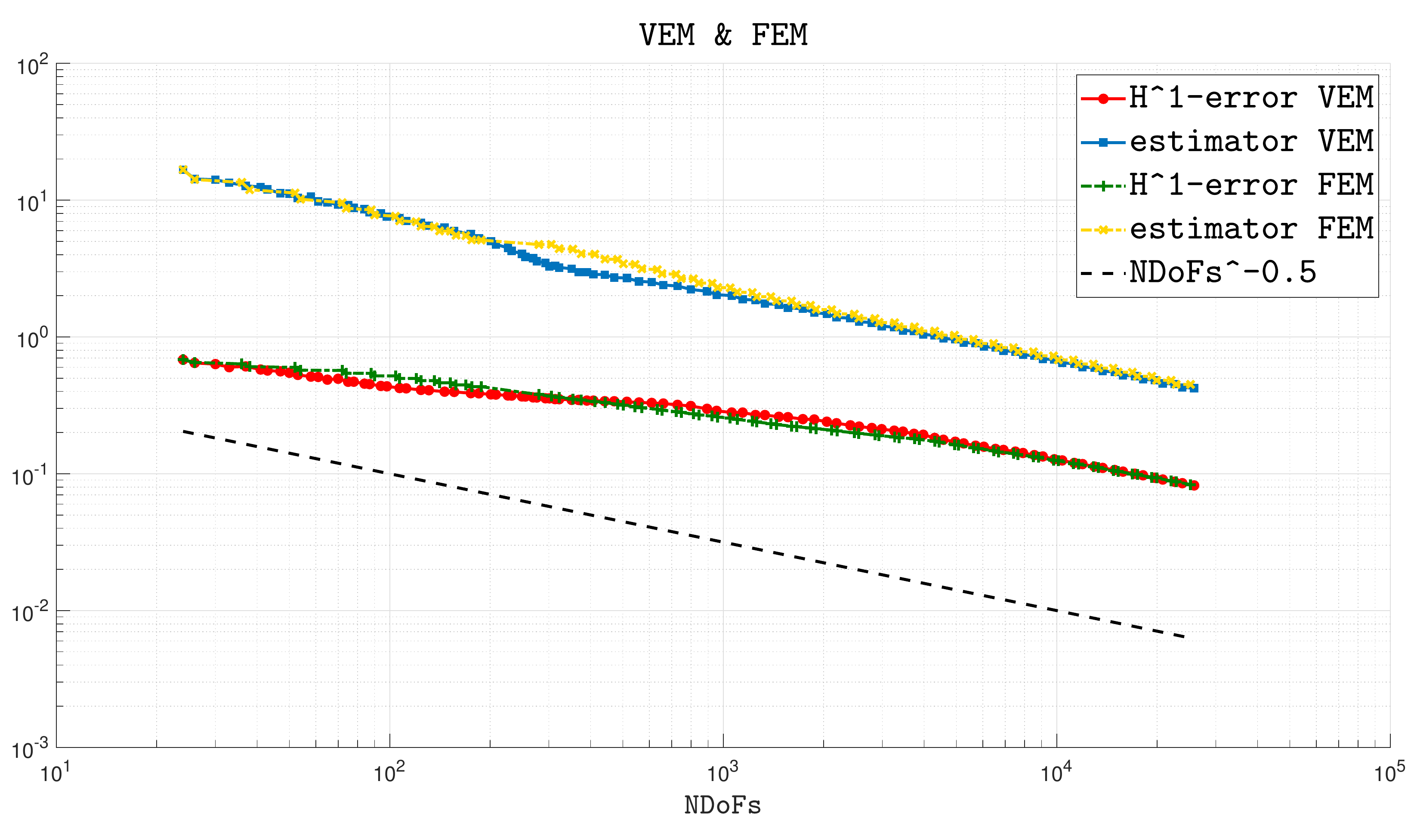}
\end{center}
\caption{{Test 2. Left: $\texttt{H\textasciicircum 1-error}$, \texttt{estimator} $\etamesh(\umesh,\data)$, \texttt{stabilization} term $\Smesh(\umesh, \umesh)^{1/2}$. Right: $\texttt{H\textasciicircum 1-error}$ and \texttt{estimator} $\etamesh(\umesh,\data)$ obtained with \texttt{VEM} and \texttt{FEM}.
In both figures the optimal decay is indicated by the dashed line with slope $\texttt{-0.5}$.}}
\label{fig:test2-err_est_stab}
\end{figure}

\noindent
To validate the practical performances of the proposed numerical scheme \eqref{def-Galerkin}, we compare the results obtained with our VEM to those obtained with a standard $\mathbb{P}_1$ FEM, implemented as a VEM with $\Lambda = \texttt{0}$ (cf. Definition \ref{def:Lambda-partitions}). The results for both methods are obtained with an ``in-house'' code, yet the FEM outcomes coincide (up to machine precision) with those obtained with the code developed in \cite{matlab}.
In Fig. \ref{fig:test2-err_est_stab} (plot on the right) we display  $\texttt{H\textasciicircum 1-error}$ and \texttt{estimator} $\etamesh(\umesh,\data)$ obtained with VEM and FEM coupled with the adaptive algorithm \eqref{eq:paradigm} and stopping parameter $\texttt{N\_Max} = \texttt{25000}$. Notice that for FEM $\texttt{H\textasciicircum 1-error}$ is the ``true'' $H^1$-relative error. 
Both methods yield very similar results in terms of behaviour of the error and the estimator. 

However, a deeper analysis shows important differences between VEM and FEM approximations in terms of the final grids denoted respectively with $\mesh_{\texttt{VEM}}$ and $\mesh_{\texttt{FEM}}$. In Fig. \ref{fig:test2 mesh global} we display the meshes  $\mesh_{\texttt{VEM}}$ and $\mesh_{\texttt{FEM}}$ obtained with stopping parameter $\texttt{N\_Max} = \texttt{5000}$. 
The number of nodes \texttt{N\_vertices} and elements \texttt{N\_elements} are 
$
\texttt{N\_vertices}(\mesh_{\texttt{VEM}}) = \texttt{5259}
$, 
$
\texttt{N\_elements}(\mesh_{\texttt{VEM}}) = \texttt{8725}
$, 
$
\texttt{N\_vertices}(\mesh_{\texttt{FEM}}) = \texttt{5070}
$, 
$
\texttt{N\_elements}(\mesh_{\texttt{FEM}}) = \texttt{10094}
$, 
i.e. the mesh $\mesh_{\texttt{FEM}}$ has \texttt{16\%} more elements than the mesh $\mesh_{\texttt{VEM}}$.
Furthermore the number of polygons in $\mesh_{\texttt{VEM}}$ (elements with more than three vertices) is 
\texttt{1653}: \texttt{1563} quadrilaterals, \texttt{86} pentagons, \texttt{2} hexagons, 
\texttt{1} heptagon and \texttt{1} nonagon.    
\begin{figure}[!htb]
\begin{center}
\includegraphics[scale=0.3]{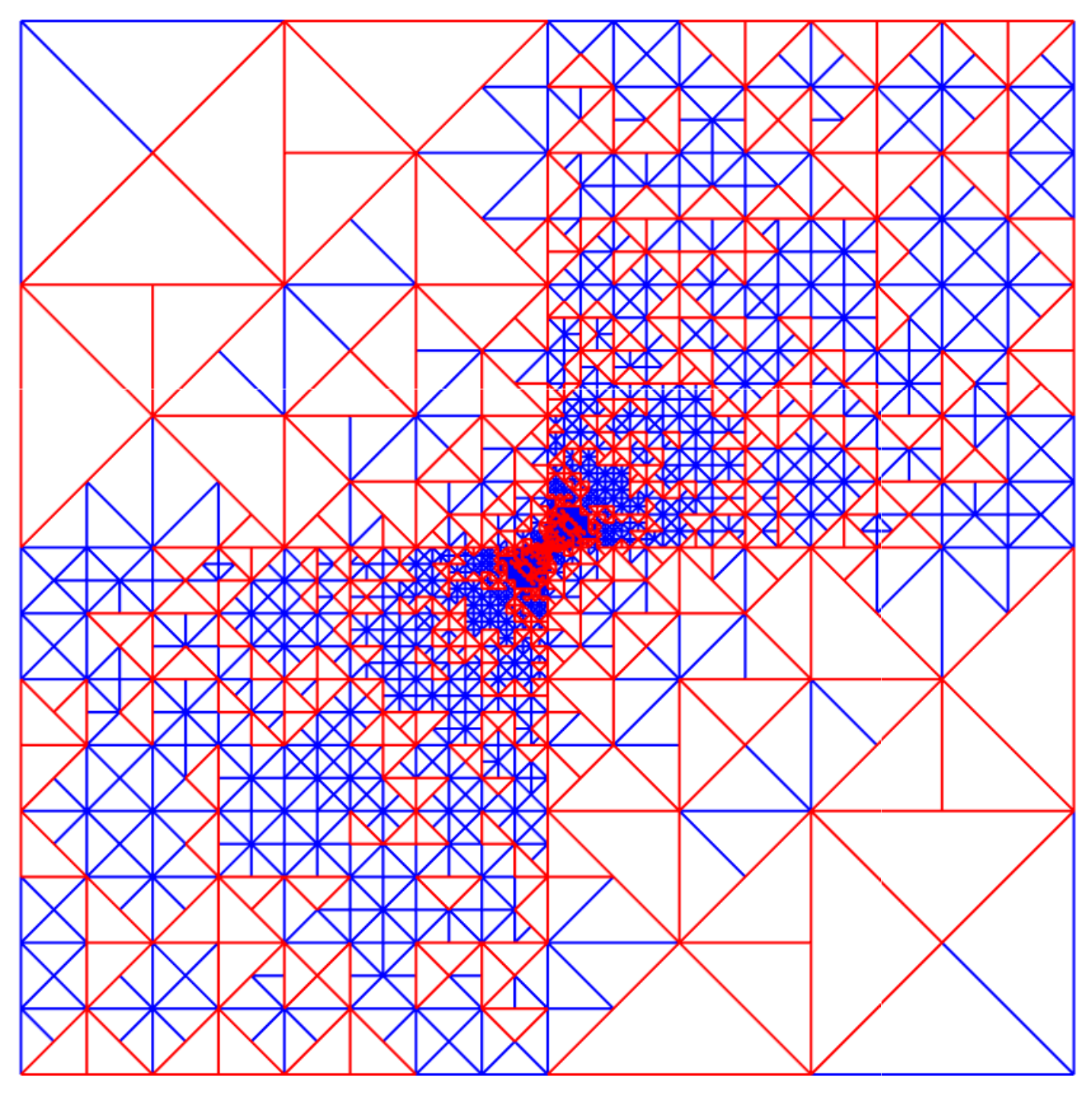}
\qquad
\includegraphics[scale=0.3]{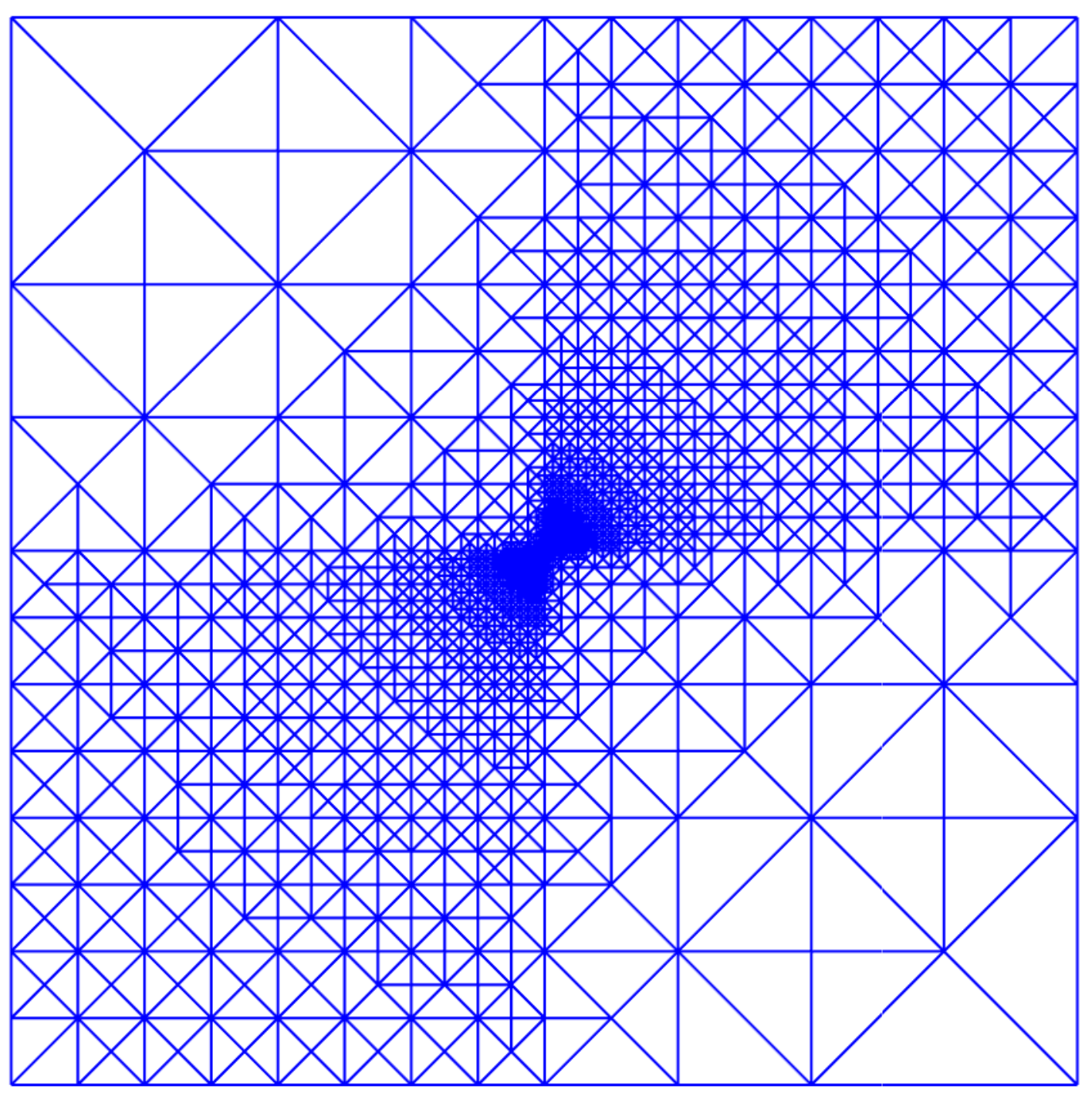}
\caption{{Test 2. Left: final grid $\mesh_{\texttt{VEM}}$ obtained with \texttt{VEM}. 
Right: final grid $\mesh_{\texttt{FEM}}$ obtained with \texttt{FEM}. Mesh elements having more than three vertices are drawn in red.}}
\label{fig:test2 mesh global}
\end{center}
\end{figure}

The grids $\mesh_{\texttt{VEM}}$ and $\mesh_{\texttt{FEM}}$ are highly graded at the origin along the bisector of the first and third quadrants. However from Fig. \ref{fig:test2 mesh 1em9} we can appreciate how grid $\mesh_{\texttt{VEM}}$ still exhibits a rather strong grading also for the zoom scaled to $10^{-9}$, thus revealing the singularity structure much better than $\mesh_{\texttt{FEM}}$.

\begin{figure}[!htb]
\begin{center}
\includegraphics[scale=0.3]{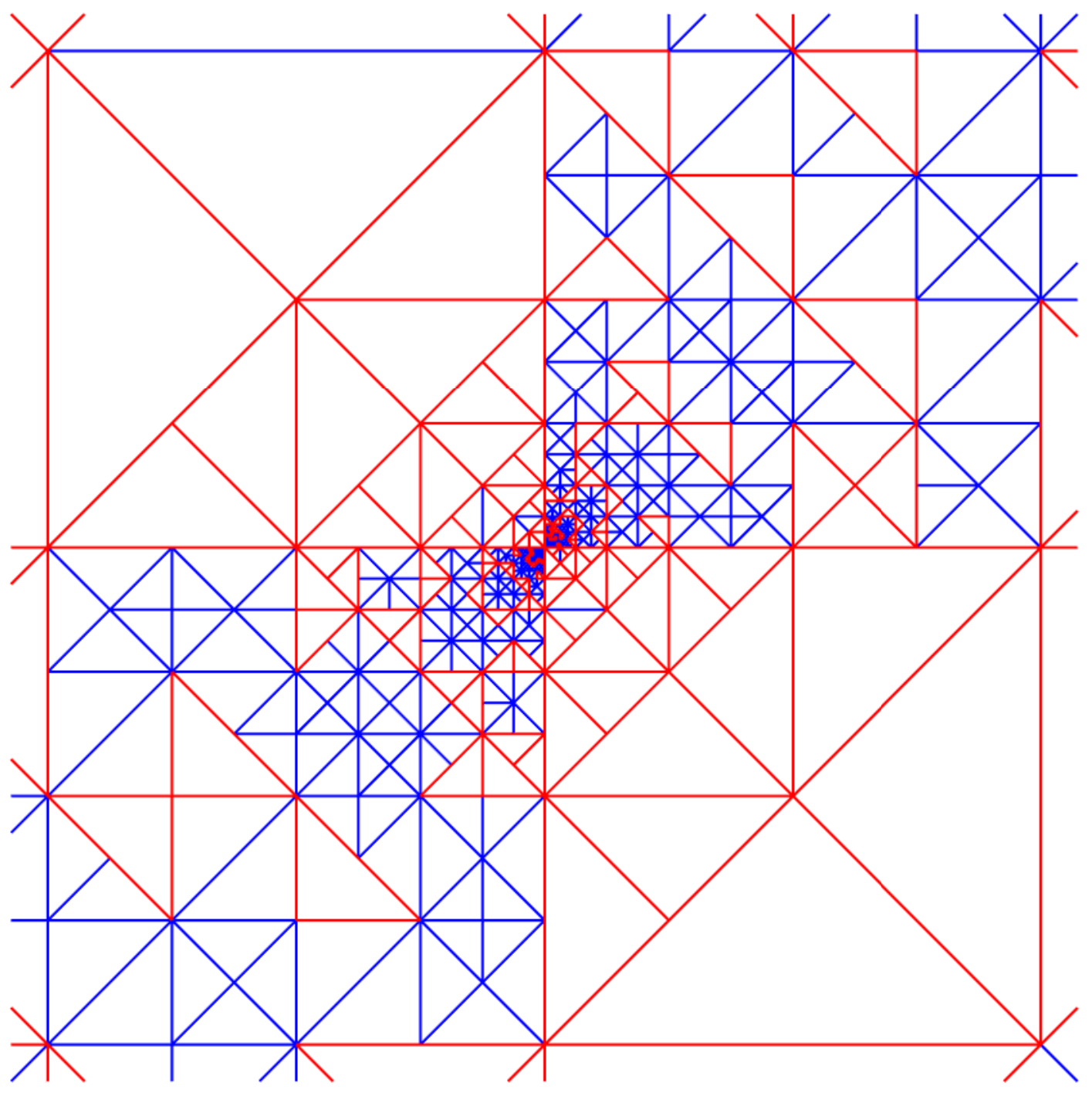}
\qquad
\includegraphics[scale=0.3]{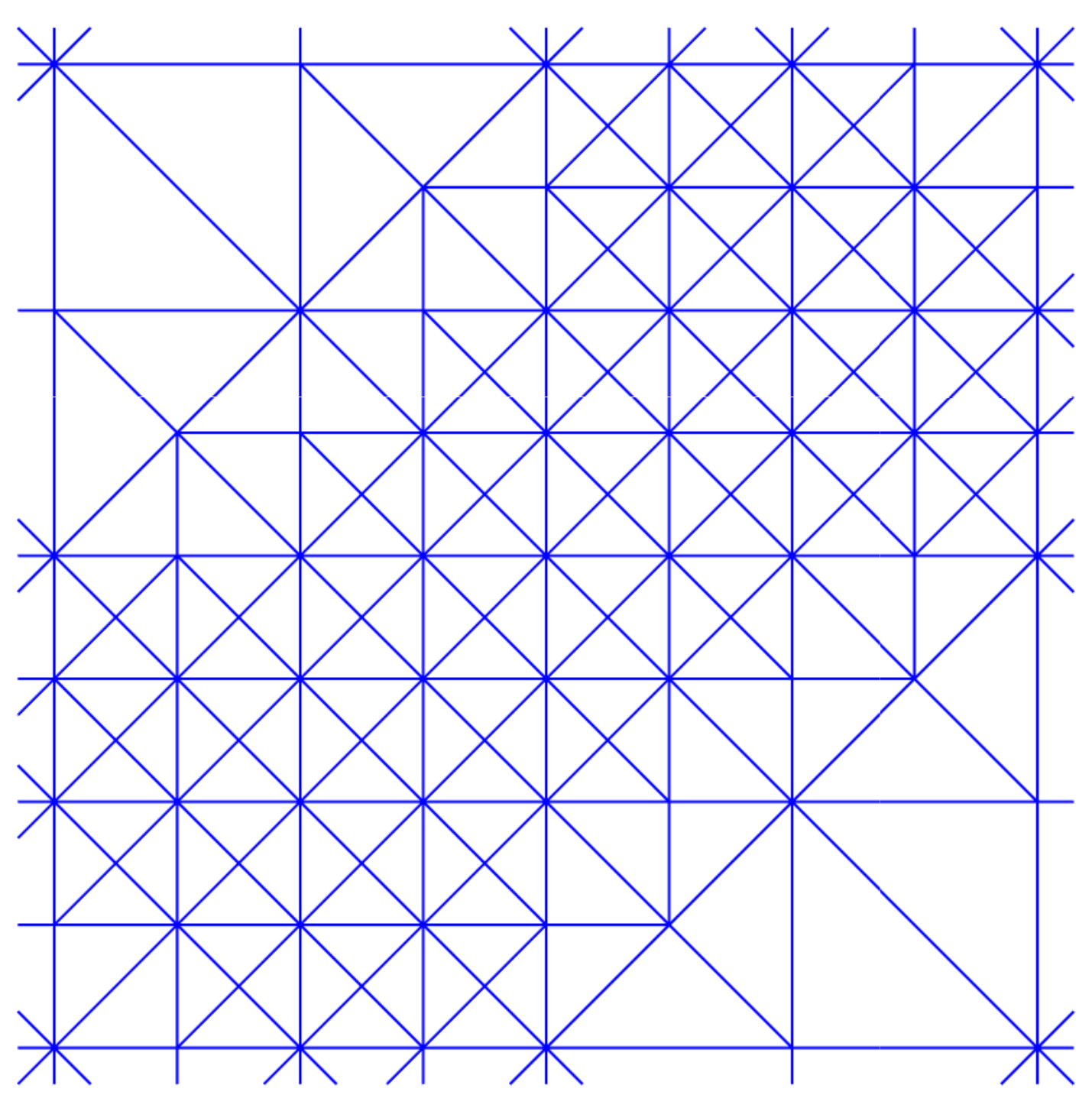}
\caption{{Test 2. Left: final grid $\mesh_{\texttt{VEM}}$.  Right: final grid $\mesh_{\texttt{FEM}}$. Zoom to $(-10^{-9}, 10^{-9})^2$. Mesh elements having more than three vertices are drawn in red.}}
\label{fig:test2 mesh 1em9}
\end{center}
\end{figure}

Finally, in Fig. \ref{fig:test 2 dettaglio} we plot the zoom to $(-10^{-10}, 10^{-10})^2$ for the grid 
$\mesh_{\texttt{VEM}}$ and the plot of the discrete solution for the finer grid. We highlight the presence of the nonagon with two nodes having global index $\lambda = $ \texttt{3}. It is worth noting that the largest global index is $\lambda=\texttt{3}$, whence the threshold $\Lambda = \texttt{10}$ is never reached by the module \texttt{REFINE}. Therefore, the condition $\lambda\leq\Lambda$ is not restrictive in practice.

\begin{figure}[!htb]
\begin{center}
\includegraphics[scale=0.31]{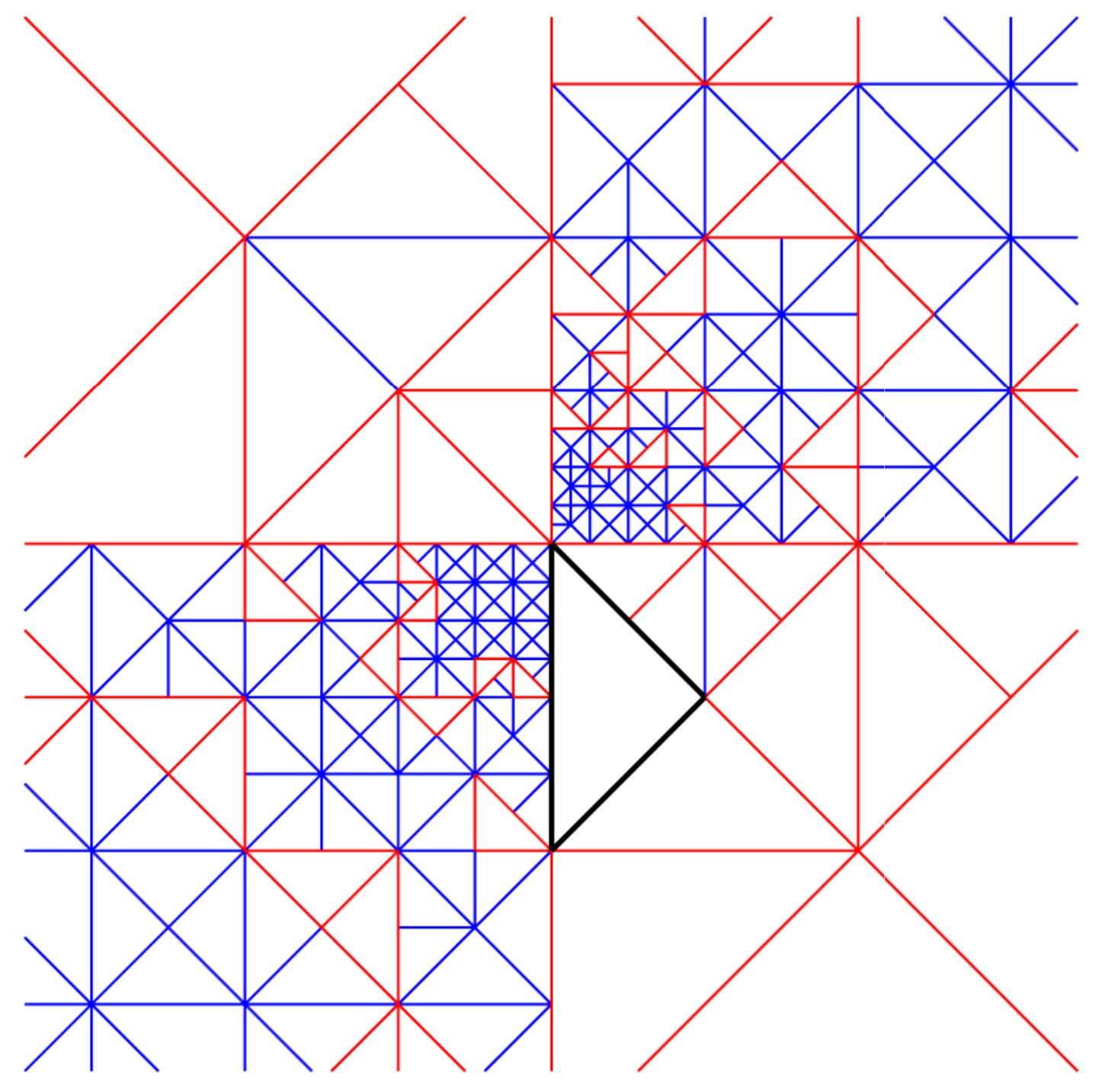}
\hspace{5mm}
\includegraphics[scale=0.28]{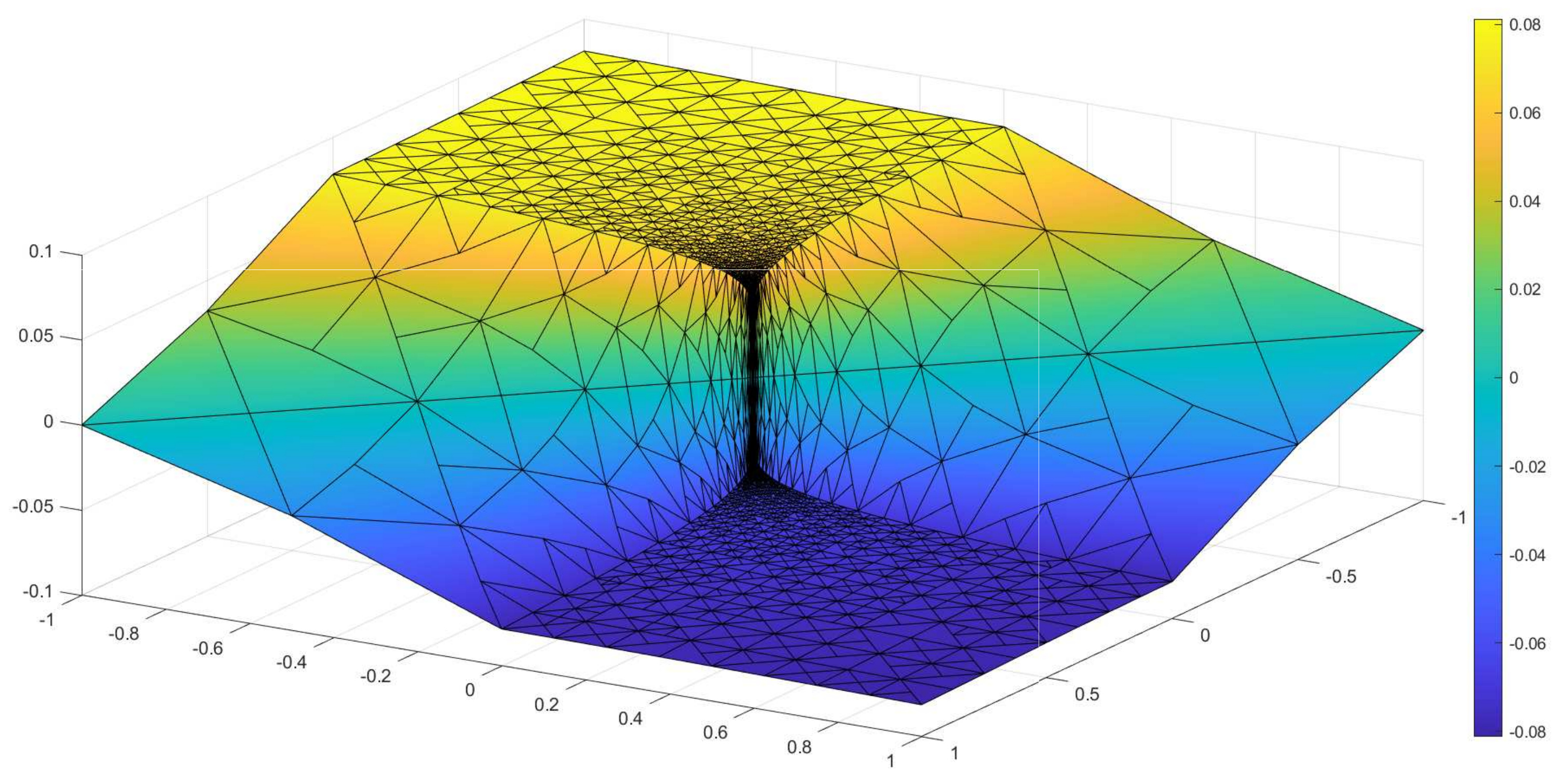}
\end{center}
\caption{{Test 2. Left: final grid $\mesh_{\texttt{VEM}}$, zoom to $(-10^{-10}, 10^{-10})^2$
(mesh elements having more than three vertices are drawn in red, and the mesh element drawn in black is a nonagon).
Right: graph of the discrete solution.
}}
\label{fig:test 2 dettaglio}
\end{figure}

\section{Conclusions}

The analysis of this paper relies crucially on the existence of a subspace $\Vmeshz \subseteq \Vmesh$ satisfying the following properties:
\begin{enumerate}[\quad$\bullet$]
\item The discrete forms satisfy the consistency property \eqref{eq:propB1} on $\Vmeshz$;

\item There exists a subset ${\cal P}$ of mesh nodes such that the collection of linear operators 
$v \rightarrow \{ v(\bm{x}) \}_{\bm{x}\in {\cal P}}$ constitutes a set of degrees of freedom for $\Vmeshz$;

\item Propositions \ref{prop:scaledPoincare} and \ref{prop:compareInterp} hold for the above choice of $\Vmeshz$ and ${\cal P}$.

\end{enumerate}

We have established these properties for meshes made of triangles, which is the most common situation in finite element methods. Yet,
there are other cases in which the above construction can be easily applied. One notable example is that of square meshes, where we assume a standard quadtree element refinement procedure that subdivides each square into four squares; in this framework the advantage of allowing hanging nodes is evident.
In such case, the space $\Vmeshz$ is chosen as
$$
\Vmeshz = \big\{ v \in \Vmesh \textrm{ such that } v|_E \in \mathbb{Q}_1(E) \ \forall E \in \mesh \big\} \, ,
$$
where $\mathbb{Q}_1$ denotes the space of bilinear polynomials, and ${\cal P}$ is the set of proper (i.e., non-hanging) nodes of the mesh. It is not difficult to adapt to the new framework the arguments given in the paper, and prove the validity of the three conditions above, thereby arriving at the same conclusions obtained for triangles. Obviously, heterogeneous meshes formed by triangles and squares could be handled as well.

The extension of the techniques presented above to general polygonal meshes (for which even a deep understanding of the refinement strategies is currently missing) seems highly non-trivial. However, we believe that the main results of this paper, namely the bound of the stabilization term by the a posteriori error estimator and the contraction property of the proposed adaptive algorithm, should hold in a wide variety of situations. We also hope that some of the ideas that we have elaborated here will turn useful to attack the challenge of providing a sound mathematical framework to adaptive virtual element methods (AVEM) in a more general setting. 

Furthermore, the results presented herein will serve as a basis in the sequel paper \cite{BCNVV:22}, in which we will design and analyze a two-step AVEM (still on triangular partitions admitting hanging nodes) able to handle variable coefficients. The primary goal of \cite{BCNVV:22} is to develop a complexity analysis.

\bigskip
\begin{center}
{\bf Acknowledgements}
\end{center}
\noindent
LBdV, CC and MV where partially supported by the Italian MIUR through the PRIN grants n. 201744KLJL (LBdV, MV) and n. 201752HKH8 (CC).
CC was also supported by the DISMA Excellence Project (CUP: E11G18000350001). RHN has been supported in part by NSF grant DMS-1908267. These supports are gratefully acknowledged. LBdV, CC, MV and GV are members of the INdAM research group GNCS.

\bibliographystyle{plain}
\bibliography{biblio}

\end{document}